\newtheorem{propo}{Proposition}[section]
\newtheorem{proposition}[propo]{Proposition}
\newtheorem{lemma}[propo]{Lemma}
\newtheorem{theorem}[propo]{Theorem}
\newcommand{\Ker}{\operatorname{Ker}}
\newcommand{\Irr}{{\mathrm {Irr}}}
\newcommand{\IBR}{{\mathrm {IBr}}}
\renewcommand{\Im}{{\mathrm {Im}}}
\newcommand{\Hom}{{\mathrm {Hom}}}
\newcommand{\Stab}{{\mathrm {Stab}}}
\newcommand{\Char}{{\mathrm {char}}}
\newcommand{\CC}{{\mathbb C}}
\newcommand{\FF}{{\mathbb F}}
\begin{document}
\title[rank $3$ permutation modules for $O_{2n}^{\pm}(2)$ and $U_{m}(2)$]
{The structure of rank $3$ permutation modules\\ for
$O_{2n}^{\pm}(2)$ and $U_{m}(2)$ acting on nonsingular points}

\author{Jonathan I. Hall}
\address{Department of Mathematics, Michigan State University, East Lansing,
MI 48824, USA} \email{jhall@math.msu.edu }
\author{Hung Ngoc Nguyen}
\address{Department of Mathematics, Michigan State University, East Lansing,
MI 48824, USA} \email{hungnguyen@math.msu.edu}
\subjclass[2000]{Primary 20C33} \keywords{Permutation Modules,
Orthogonal Groups, Unitary Groups}
\date{\today}

\begin{abstract}
We study the odd-characteristic structure of permutation modules for
the rank~$3$ natural actions of~$O_{2n}^{\pm}(2)$ ($n\geq3$)
and~$U_{m}(2)$ ($m\geq4$) on nonsingular points of their standard
modules.
\end{abstract}

\maketitle


\section{Introduction}
Let~$G$ be either a symplectic, an orthogonal, or a unitary group.
The primitive rank $3$ permutation representations of $G$ have been
classified in~\cite{KL}. As a result, the natural action of $G$ on
the set of singular points (by points we mean $1$-dimensional
subspaces) of its standard module is always rank $3$ and the
associated permutation module has been studied in many papers
(see~\cite{LST,L1,L2,ST}). On the other hand, the action of~$G$ on
the set of nonsingular points is rank~$3$ if and only if~$G$ is
orthogonal groups $O_{2n}^{\pm}(2)$ with $n\geq3$ or unitary groups
$U_{m}(2)$ with $m\geq4$. The purpose of this paper is to describe
the odd-characteristic structure, including the composition factors
and submodule lattices, of the permutation modules for these groups
acting on nonsingular points.

We partly utilize the notations and methods from~\cite{L1}
and~\cite{ST}. Let~$\FF$ be an algebraically closed field of cross
characteristic~$\ell$ (i.e., $\ell$ is different from the
characteristic of the underlying field of $G$). If the action of~$G$
on a set $\Omega$ is rank~$3$ then the $\FF G$-module $\FF\Omega$
has two special submodules, which are so-called \emph{graph
submodules} in the terminology of Liebeck~\cite{L1}. Similar to the
study of cross-characteristic permutation modules on singular
points, these graph submodules in our problem are \emph{minimal} in
an appropriate sense (see Propositions~\ref{mainO+}, \ref{mainO-},
\ref{mainUeven}, and~\ref{mainUodd}). When the graph submodules are
different, their direct sum will be a submodule of codimension~$1$
in the permutation module and therefore the structure can be
determined without significant effort. When they are the same (that
is when $\Char(\FF)=3$, as we will see later on), the problem
becomes more complicated. We will look at both module and character
points of view to handle this case.

The complete description of submodule structures of the permutation
modules is given in the following theorem.

\begin{theorem} \label{theorem}Let $\FF$ be an algebraically closed field
of odd characteristic~$\ell$. Let~$G$ be either $O^{\pm}_{2n}(2)$
with $n\geq3$ or $U_{m}(2)$ with~$m\geq 4$ and $P$ be the set of
nonsingular points of its standard module. Then the $\FF
G$-permutation module $\FF P$ of $G$ acting naturally on~$P$ has the
structure as described in Tables~1,2,3, and~4, where the socle
series, submodule lattices, and dimensions of composition factors
are determined. In these tables, $\delta_{i,j}=1$ if $i\mid j$ and
$0$ otherwise. In table~\ref{tableO-} where~$G=O_{2n}^-(2)$,
$\omega$ is the nontrivial $\FF G$-module of dimension~$1$.
\end{theorem}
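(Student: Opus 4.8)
The plan is to assemble the theorem from three ingredients --- the decomposition of the complex permutation module $\CC P$, a count of the $\ell$-modular composition factors of $\FF P$, and the minimality of the two graph submodules proved in Propositions~\ref{mainO+}, \ref{mainO-}, \ref{mainUeven}, and~\ref{mainUodd} --- glued together using the fact that $\FF P$ is a self-dual $\FF G$-module.

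First I would fix the ordinary picture. Since $G$ acts on $P$ as a rank~$3$ group, $\CC P$ is multiplicity free with exactly three irreducible constituents: the trivial character and two further characters $\chi_1,\chi_2$, whose degrees and field of definition are read off from the intersection parameters of the rank~$3$ scheme on $P$, equivalently from the valency $k$ and the two eigenvalues $r>s$ of one of the associated strongly regular graphs. The composition factors of $\FF P$, counted with multiplicity, are then precisely the $\ell$-modular constituents of the reductions $\tilde 1\oplus\tilde\chi_1\oplus\tilde\chi_2$, so the task becomes to locate $\chi_1,\chi_2$ and their reductions inside the known low-dimensional cross-characteristic representation theory of these groups --- for $O^{\pm}_{2n}(2)$ this is controlled through the natural module and the overgroup $Sp_{2n}(2)$, for $U_m(2)$ through the unitary Weil representation. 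This produces the dimension data in Tables~1, 2, 3, and~4, including the trivial factor, the nontrivial $1$-dimensional factor $\omega$ in the $O^-$ case (which appears because it is a constituent of some $\tilde\chi_i$ for the appropriate $\ell$), and the parameters $\delta_{i,j}$, which simultaneously record the collapse of an exceptional small constituent and the dichotomy between $\ell\nmid|P|$ and $\ell\mid|P|$.

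Next I would treat the generic case $\Char(\FF)=\ell\neq3$. Here $r\not\equiv s\pmod\ell$, the idempotents of the adjacency algebra projecting onto the $r$- and $s$-eigenspaces remain defined modulo $\ell$, and so the two graph submodules $N_1,N_2$ are distinct, with $N_1\oplus N_2=\Ker(\mathrm{aug})$ of codimension~$1$ in $\FF P$ and $\FF P/(N_1\oplus N_2)$ trivial. By the cited propositions each $N_i$ is minimal, hence indecomposable with simple head and --- by self-duality --- simple socle, so its internal structure is determined as soon as its composition factors are known. If $\ell\nmid|P|$ then the all-ones vector spans a complement and $\FF P=\FF\oplus N_1\oplus N_2$; if $\ell\mid|P|$ then that vector lies in $N_1\oplus N_2$ and one must decide which graph submodule contains it, equivalently whether $k\equiv r$ or $k\equiv s\pmod\ell$, which is exactly the divisibility (or $\delta_{i,j}$) condition appearing in the tables. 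In either subcase, combining the minimality of the $N_i$, the composition factors from the previous step, and the symmetry of the submodule lattice forced by self-duality pins down the socle series and the full submodule lattice, and one checks agreement with the relevant table.

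The last and hardest case is $\Char(\FF)=3$, where $N_1=N_2=:N$, so that neither the eigenvalue splitting nor a codimension-one decomposition is available and $\FF P$ is genuinely non-semisimple; the extensions among the composition factors found above must now be decided directly. Following the two viewpoints announced in the introduction, I would attack this from the module side by restricting $\FF P$ to a maximal parabolic subgroup of $G$, and to the stabilizer of a nonsingular point --- which contains a smaller classical group $O^{\pm}_{2n-2}(2)$ or $U_{m-1}(2)$ --- in order to test which potential extensions between the factors are non-split (the minimality of $N$ already fixes a good part of this); and from the character side by using the $3$-modular decomposition numbers of $G$ for the characters occurring in $\CC P$ to bound, and then determine, the multiplicity of each composition factor in each socle layer, with self-duality again halving the work by making the lattice symmetric. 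I expect the crux to be precisely here: proving the required non-splitness statements, and then ruling out every other submodule configuration consistent with the composition factors and with self-duality. Once this is done the (near-)uniserial rows for $\ell=3$ in Tables~1, 2, 3, and~4 follow, and a direct verification of the smallest cases ($n=3$ and $m=4,5$) completes the proof.
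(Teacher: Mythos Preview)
Your three-stage outline (complex decomposition, the $\ell\neq3$ case via distinct graph submodules, and $\ell=3$ separately) matches the paper's architecture, and your treatment of $\ell\neq3$ is essentially what the paper does. The difference, and the gap, is in the $\ell=3$ case.

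The paper's decisive tool there is not restriction to parabolics or abstract decomposition numbers, but the pair of explicit $\FF G$-homomorphisms $Q\colon\FF P\to\FF P^0$ and $R\colon\FF P^0\to\FF P$ of Lemma~\ref{main1}, linking the nonsingular-points module to the singular-points module $\FF P^0$, whose full submodule lattice is already known from~\cite{ST}. This link is used twice. First, via Lemma~\ref{main2}, it identifies $\psi$ as a common constituent of $\rho$ and $\rho^0$, so the decomposition of $\overline{\psi}$ can be imported directly from~\cite{ST}; together with degree bounds from~\cite{Ho} and the restriction arguments you mention, this yields the composition factors. Second, and more importantly, the images $\Im(Q)$ and $\Im(R)$ are located inside the \emph{known} submodule lattice of $\FF P^0$, and this is what resolves the extension ambiguities. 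For example, in the $O^+$ case with $\ell=3$ and $n$ even, to rule out that a certain self-dual subquotient splits as $\FF\oplus Z\oplus\FF$ rather than being uniserial $\FF\!-\!Z\!-\!\FF$, the paper observes that the split configuration would be incompatible with the shape of $\Im(Q)\subseteq\FF P^0$ read off from~\cite{ST}. Analogous arguments via $\Im(R)$ settle the $U_{2n+1}(2)$ lattices.

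Your proposed substitutes---restriction to smaller classical subgroups and ``the $3$-modular decomposition numbers of $G$''---are both used in the paper, but only as auxiliary input: restriction helps count linear constituents (Lemmas~\ref{lemmaO-1}, \ref{lemmaO-2}) and the parabolic restriction appears once (Lemma~\ref{lemmaUeven2}). They do not by themselves decide the non-splitness questions, and invoking full $3$-decomposition matrices for $O^\pm_{2n}(2)$ or $U_m(2)$ with arbitrary $n$ is not available. Without the transfer from $\FF P^0$ you would need an independent argument for each of these lattice determinations, and your proposal does not indicate one.
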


The paper is organized as follows. In the next section, we collect
some standard results of rank~$3$ permutation modules which will be
used frequently later on. We also establish some relations between
the actions of $G$ on singular points and nonsingular points
in~$\S2$. Each of the families of groups~$O^+_{2n}(2)$,
$O^-_{2n}(2)$, $U_{2n}(2)$, and~$U_{2n+1}(2)$ is treated
individually in~$\S 3$, $\S4$, and~$\S 5$, respectively. Proof of
the main theorem for each family is at the end of the corresponding
section. Since proofs are similar in many places, we only give
detailed arguments for the case $G=O^+_{2n}(2)$.

\medskip

\textbf{Notation}: Throughout the paper, $O^{\pm}_{2n}(2)$
and~$U_{m}(2)$ are full orthogonal and unitary groups, respectively.
Also, $SU_{m}(2)$ is the special unitary group. The commutator
subgroup of $O^{\pm}_{2n}(2)$ is denoted by $\Omega^\pm_{2n}(2)$.
If~$G$ is a group, $\Irr(G)$ (resp. $\IBR_{\ell}(G)$) will be the
set of irreducible complex (resp. $\ell$-Brauer) characters of~$G$.
We denote by $\overline{\chi}$ the reduction modulo~$\ell$ of a
complex character $\chi$. If $\varphi\in\IBR_{\ell}(G)$
and~$\lambda$ is a constituent of~$\varphi$ with multiplicity~$k$,
then we sometimes say that $\varphi$ has $k$ constituents~$\lambda$.

\footnotesize
\begin{table}[h]\label{tableO+}\caption{Submodule structure of $\FF O^+_{2n}(2)$-module $\FF P$.}
\begin{tabular}{ll} \hline
Conditions on $\ell$ and $n$\quad\quad\quad & Structure of $\FF
P$\\\hline
\xymatrix@C=8pt@R=8pt{\ell\neq2,3;\ell\nmid (2^n-1)}& \xymatrix@C=8pt@R=8pt{\FF\oplus X\oplus Y }\\

\xymatrix@C=8pt@R=8pt{\ell\neq2,3;\ell\mid
(2^n-1)}&\xymatrix@C=8pt@R=8pt{
&&\FF\ar@{-}[d]\\
X&\oplus&Y\ar@{-}[d]\\
&&\FF}\\

\xymatrix@C=8pt@R=8pt{\ell=3;n \text{ even }}&\xymatrix@C=8pt@R=8pt{
\FF\ar@{-}[dr]&&X\ar@{-}[dl]\\
&Z\ar@{-}[dl]\ar@{-}[dr]&\\
\FF&&X}\\

\xymatrix@C=8pt@R=8pt{\ell=3;n \text{ odd}}  &
\xymatrix@C=8pt@R=8pt{
&&X\ar@{-}[d]\\
\FF&\oplus&Z\ar@{-}[d]\\
&&X}\\\hline
\end{tabular}

where, $\dim X=\frac{(2^n-1)(2^{n-1}-1)}{3}$, $\dim
Y=\frac{2^{2n}-4}{3}-\delta_{\ell,2^n-1}$, \\and $\dim
Z=\frac{(2^n-1)(2^{n-1}+2)}{3}-1-\delta_{\ell,2^n-1}$.\normalsize
\end{table}

\begin{table}[h]\caption{Submodule structure of $\FF O^-_{2n}(2)$-module $\FF P$.}\label{tableO-}
\begin{tabular}{ll} \hline
Conditions on $\ell$ and $n$\quad\quad\quad & Structure of $\FF
P$\\\hline
\xymatrix@C=8pt@R=8pt{\ell\neq2,3;\ell\nmid (2^n+1)}& \xymatrix@C=8pt@R=8pt{\FF\oplus X\oplus Y }\\

\xymatrix@C=8pt@R=8pt{\ell\neq2,3;\ell\mid
(2^n+1)}&\xymatrix@C=8pt@R=8pt{
&&\FF\ar@{-}[d]\\
X&\oplus&Y\ar@{-}[d]\\
&&\FF}\\

\xymatrix@C=8pt@R=8pt{\ell=3;n \text{ even}}  &
\xymatrix@C=8pt@R=8pt{
&&&X\ar@{-}[dl]\ar@{-}[dr]&\\
\FF&\oplus&\omega\ar@{-}[dr]&&Z\ar@{-}[dl]\\
&&&X&}\\

\xymatrix@C=8pt@R=8pt{\ell=3;n \text{ odd }}&\xymatrix@C=8pt@R=8pt{
\FF\ar@{-}[d]&&X\ar@{-}[dll]\ar@{-}[d]\\
Z\ar@{-}[d]\ar@{-}[drr]&&\omega\ar@{-}[d]\\
\FF&&X}\\\hline
\end{tabular}

where, $\dim X=\frac{(2^n+1)(2^{n-1}+1)}{3}-\delta_{3,\ell}$, $\dim
Y=\frac{2^{2n}-4}{3}-\delta_{\ell,2^n-1}$, \\and $\dim
Z=\frac{(2^n+1)(2^{n-1}-2)}{3}-1+\delta_{\ell,2^n-1}$.\normalsize
\end{table}

\begin{table}[h]\caption{Submodule structure of $\FF U_{2n}(2)$-module $\FF P$.}\label{tableUeven}
\begin{tabular}{ll} \hline
Conditions on $\ell$ and $n$\quad\quad\quad & Structure of $\FF
P$\\\hline
\xymatrix@C=8pt@R=8pt{\ell\neq2,3;\ell\nmid (2^{2n}-1)}& \xymatrix@C=8pt@R=8pt{\FF\oplus X\oplus Y }\\

\xymatrix@C=8pt@R=8pt{\ell\neq2,3;\ell\mid
(2^{2n}-1)}&\xymatrix@C=8pt@R=8pt{
&&\FF\ar@{-}[d]\\
X&\oplus&Y\ar@{-}[d]\\
&&\FF}\\

\xymatrix@C=8pt@R=8pt{\ell=3;3\mid n}&\xymatrix@C=8pt@R=8pt{
\FF\ar@{-}[d]&&Z\ar@{-}[dll]\ar@{-}[d]\\
W_2\ar@{-}[d]\ar@{-}[drr]&&W_1\ar@{-}[d]\\
\FF&&Z}\\

\xymatrix@C=8pt@R=8pt{\ell=3;3\nmid n} & \xymatrix@C=8pt@R=8pt{
&&&Z\ar@{-}[dl]\ar@{-}[dr]&\\
\FF&\oplus&W_1\ar@{-}[dr]&&W_2\ar@{-}[dl]\\
&&&Z&}\\ \hline
\end{tabular}

where, $\dim X=\frac{(2^{2n}-1)(2^{2n-1}+1)}{9}$, $\dim
Y=\frac{(2^{2n}+2)(2^{2n}-4)}{9}-\delta_{\ell,2^{2n}-1}$, $\dim
W_1=\frac{2^{2n}-1}{3}$, \\$\dim
W_2=\frac{(2^{2n}-1)(2^{2n-1}+1)}{9}-1-\delta_{3,n}$, and $\dim
Z=\frac{(2^{2n}-1)(2^{2n-1}-2)}{9}$.\normalsize
\end{table}

\begin{table}[h]\caption{Submodule structure of $\FF U_{2n+1}(2)$-module $\FF P$.}\label{tableUodd}
\begin{tabular}{ll} \hline
Conditions on $\ell$ and $n$\quad\quad\quad & Structure of $\FF
P$\\\hline
\xymatrix@C=8pt@R=8pt{\ell\neq2,3;\ell\nmid (2^{2n+1}+1)}& \xymatrix@C=8pt@R=8pt{\FF\oplus X\oplus Y }\\

\xymatrix@C=8pt@R=8pt{\ell\neq2,3;\ell\mid
(2^{2n+1}+1)}&\xymatrix@C=8pt@R=8pt{
&&\FF\ar@{-}[d]\\
X&\oplus&Y\ar@{-}[d]\\
&&\FF}\\

\xymatrix@C=8pt@R=8pt{\ell=3;3\mid n}& \xymatrix@C=8pt@R=8pt{
&&X\ar@{-}[d]\\
&&Z\ar@{-}[d]\\
&&\FF\ar@{-}[d]\\
\FF&\oplus & W\ar@{-}[d]\\
&&\FF\ar@{-}[d]\\
&&Z\ar@{-}[d]\\
&&X}\\

\xymatrix@C=8pt@R=8pt{\ell=3;n \equiv 1 \pmod{3}}  &
\xymatrix@C=8pt@R=8pt{
\FF\ar@{-}[dddr]&&X\ar@{-}[dl]\\
&Z\ar@{-}[dddl]\ar@{-}[d]&\\
&W\ar@{-}[d]&\\
&Z\ar@{-}[dr]&\\
\FF&&X}\\

\xymatrix@C=8pt@R=8pt{\ell=3;n\equiv 2 \pmod{3}}  &
\xymatrix@C=8pt@R=8pt{&&& X\ar@{-}[d]&\\
&&& Z\ar@{-}[dl]\ar@{-}[dr]&\\
\FF&\oplus & \FF\ar@{-}[dr] &  & W\ar@{-}[dl]\\
&&& Z \ar@{-}[d]&\\
&&& X&}\\ \hline
\end{tabular}

where, $\dim X=\frac{(2^{2n+1}+1)(2^{2n}-1)}{9}$, $\dim
Y=\frac{(2^{2n+1}-2)(2^{2n+1}+4)}{9}-\delta_{\ell,2^{2n+1}+1}$,\\
$\dim Z=\frac{2^{2n+1}-2}{3}$, and $\dim
W=\frac{(2^{2n+1}+1)(2^{2n}-4)}{9}-\delta_{3,n}$.\normalsize
\end{table}
\normalsize


\section{Preliminaries on rank $3$ permutation modules}
We start this section by recalling some results on rank~$3$
permutation modules from~\cite{Hi} and~\cite{L1}.

Let $G$ be a permutation group of rank~$3$ acting on the set
$\Omega$. Then, for each~$\alpha\in \Omega$, $G_\alpha$, the
stabilizer of~$\alpha$, acts on~$\Omega$ with $3$ orbits
$\{\alpha\}, \Delta(\alpha)$, and~$\Phi(\alpha)$. We choose the
notation so that $\Delta(\alpha)g=\Delta(\alpha g)$
and~$\Phi(\alpha)g=\Phi(\alpha g)$. Define the following parameters
associated with the action of~$G$ on~$\Omega$:
$$a=|\Delta(\alpha)|, b=|\Phi(\alpha)|,$$
$$r=|\Delta(\alpha)\cap\Delta(\beta)| \text{ for } \beta\in \Delta(\alpha),$$
$$s=|\Delta(\alpha)\cap\Delta(\gamma)| \text{ for } \gamma\in \Phi(\alpha).$$
These parameters do not depend on the choices of~$\alpha, \beta$,
and~$\gamma$.

Let $\mathbb{F}$ be a field of characteristic~$\ell$ and~$\FF
\Omega$ the associated permutation $\FF G$-module. For any subset
$\Delta$ of $\Omega$, we denote by $[\Delta]$ the element
$\Sigma_{\delta\in\Delta}\delta$ of $\FF \Omega$. Set $S(\FF
\Omega)=\{\sum_{\omega\in\Omega}a_\omega\omega\mid a_\omega\in\FF,
\sum a_\omega=0\}$ and $T(\FF \Omega)=\{c[\Omega]\mid c\in\FF\}$.
Note that $S(\FF\Omega)$ and $T(\FF\Omega)$ are $\FF G$-submodules
of $\FF \Omega$ of dimensions~$|\Omega|-1,1$, respectively.
Moreover, $T(\FF \Omega)$ is isomorphic to the one-dimensional
trivial module. We define a natural inner product on~$\FF\Omega$ by
$$\big\langle
\sum_{\omega\in\Omega}a_{\omega}\omega,\sum_{\omega\in\Omega}b_{\omega}\omega\big\rangle=\sum_{\omega\in\Omega}a_{\omega}b_\omega.$$
It is easy to see that $\langle.,.\rangle$ is non-singular and
$G$-invariant. In particular, $\FF \Omega$ is a self-dual $\FF
G$-module. If $U$ is a submodule of $\FF \Omega$, we denote by
$U^\perp$ the submodule of~$\FF \Omega$ consisting of all elements
orthogonal to~$U$.

For any element $c\in \FF$, let $U_c$ be the $\FF G$-submodule of
$\FF \Omega$ generated by all elements of the form
$v_{c,\alpha}=c\alpha+[\Delta(\alpha)], \alpha\in \Omega$ and $U'_c$
be the $\FF G$-submodule of $U_c$ generated by all elements
$v_{c,\alpha}-v_{c,\beta}=c(\alpha-\beta)+[\Delta(\alpha)]-[\Delta(\beta)],
\alpha, \beta\in\Omega$. It is obvious that $U'_c$ is always
contained in~$S(\FF\Omega)$. The following lemma tells us that
$U'_c=S(\FF\Omega)$ for most of~$c$.

\begin{lemma}[\cite{L1}] \label{liebeck}If $c$ is not a root of the quadratic
equation
\begin{equation}\label{quadratic} x^2+(r-s)x+(s-a)=0,
\end{equation}
then $U'_c=S(\FF\Omega)$. Moreover, if $c$ and $d$ are roots of this
equation then $\langle v_{c,\alpha},v_{d,\beta}\rangle=s$ for any
$\alpha, \beta\in \Omega$. Consequently, $\langle U'_c,
U_d\rangle=\langle U'_d, U_c\rangle=0$.
\end{lemma}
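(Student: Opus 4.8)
The plan is to work directly with the combinatorial identities that govern how the sets $\Delta(\alpha)$ intersect, since everything in the statement is a consequence of the rank~$3$ parameters $a,b,r,s$ and the defining relation that $c,d$ solve the quadratic~\eqref{quadratic}. First I would expand the inner product $\langle v_{c,\alpha},v_{d,\beta}\rangle$ using bilinearity into four terms: $cd\langle\alpha,\beta\rangle$, $c\langle\alpha,[\Delta(\beta)]\rangle$, $d\langle[\Delta(\alpha)],\beta\rangle$, and $\langle[\Delta(\alpha)],[\Delta(\beta)]\rangle$. Each of these is read off from the permutation action: $\langle\alpha,\beta\rangle=\delta_{\alpha,\beta}$, $\langle\alpha,[\Delta(\beta)]\rangle=1$ or $0$ according as $\alpha\in\Delta(\beta)$ or not, and $\langle[\Delta(\alpha)],[\Delta(\beta)]\rangle=|\Delta(\alpha)\cap\Delta(\beta)|$, which is $a$, $r$, or $s$ depending on whether $\beta=\alpha$, $\beta\in\Delta(\alpha)$, or $\beta\in\Phi(\alpha)$.

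The main step is therefore a case split on the relative position of $\alpha$ and $\beta$. In the case $\beta=\alpha$ one gets $cd+0+0+a$; here one uses that the product of the two roots of~\eqref{quadratic} is $s-a$, so $cd=s-a$ and the sum is $s$. In the case $\beta\in\Delta(\alpha)$ (so also $\alpha\in\Delta(\beta)$, as $\Delta$ is symmetric), one gets $0 + c + d + r$, and since the sum of the roots is $-(r-s)=s-r$, this equals $(s-r)+r=s$. In the case $\beta\in\Phi(\alpha)$, $\alpha\notin\Delta(\beta)$ and $\beta\notin\Delta(\alpha)$, so the middle terms vanish and one gets $0+0+0+s=s$. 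In all three cases the value is $s$, which proves the first assertion. I expect the only place requiring care is bookkeeping the symmetry of the relations $\Delta$ and $\Phi$ (which is guaranteed by the chosen notation $\Delta(\alpha)g=\Delta(\alpha g)$, forcing $\alpha\in\Delta(\beta)\iff\beta\in\Delta(\alpha)$) and keeping track of which Vieta relation is being invoked.

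For the final ``consequently'' clause, note that $U'_d$ is spanned by the differences $v_{d,\alpha}-v_{d,\beta}$ and $U_c$ by the elements $v_{c,\gamma}$; by bilinearity it suffices to compute $\langle v_{c,\gamma}, v_{d,\alpha}-v_{d,\beta}\rangle = \langle v_{c,\gamma},v_{d,\alpha}\rangle - \langle v_{c,\gamma},v_{d,\beta}\rangle = s - s = 0$, using the constancy just established. The same argument with $c$ and $d$ interchanged gives $\langle U'_c, U_d\rangle = 0$. There is no real obstacle here; the whole proof is a short computation, and the substantive content is recognizing that ``$c,d$ are the two roots'' is exactly what is needed to collapse all three position cases to the single value~$s$.
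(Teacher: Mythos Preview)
Your computation for the ``Moreover'' and ``Consequently'' clauses is correct and is exactly the intended argument: a three-case split on the relative position of $\alpha$ and $\beta$, with Vieta's relations $c+d=s-r$ and $cd=s-a$ collapsing each case to $s$, followed by a trivial bilinearity argument for the orthogonality. The paper itself does not supply a proof of this lemma (it is simply cited from~\cite{L1}), so there is nothing further to compare on those parts.

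However, your proposal does not address the \emph{first} assertion of the lemma at all: that if $c$ is not a root of~\eqref{quadratic} then $U'_c=S(\FF\Omega)$. This is a genuine omission, and it is the part of the lemma that is actually invoked later in the paper (for instance at the end of the proof of Proposition~\ref{mainO+}). The standard argument runs as follows. Observe that $v_{c,\alpha}-v_{c,\beta}=(cI+T)(\alpha-\beta)$, where $T$ is the adjacency map $\alpha\mapsto[\Delta(\alpha)]$, so $U'_c=(cI+T)(S(\FF\Omega))$. A direct count gives
\[
T^2(\alpha)=a\alpha+r[\Delta(\alpha)]+s[\Phi(\alpha)]=(a-s)\alpha+(r-s)[\Delta(\alpha)]+s[\Omega],
\]
and since $[\Omega]$ vanishes on differences, $T$ satisfies $T^2-(r-s)T-(a-s)I=0$ on $S(\FF\Omega)$. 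The eigenvalues of $T$ on $S(\FF\Omega)$ are therefore the roots of $x^2-(r-s)x-(a-s)$, i.e.\ the negatives of the roots of~\eqref{quadratic}. Hence $cI+T$ is invertible on $S(\FF\Omega)$ precisely when $-c$ is not an eigenvalue of $T$ there, which is exactly the hypothesis that $c$ is not a root of~\eqref{quadratic}; in that case $U'_c=(cI+T)(S(\FF\Omega))=S(\FF\Omega)$. You should add this argument (or an equivalent one) to complete the proof.
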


Define a linear transformation on~$\FF \Omega$ as follows
\begin{equation}\label{T}\begin{array}{cccc}T:&\FF \Omega &\rightarrow &\FF \Omega\\
&\alpha&\mapsto &[\Delta(\alpha)].\end{array}\end{equation} It is
easy to see that $T$ is an $\FF G$-homomorphism. Let $c,d$ be the
two roots of equation~(\ref{quadratic}). We have
$$T(v_{c,\alpha})=c[\Delta(\alpha)]+\sum_{\delta\in\Delta(\alpha)}
[\Delta(\delta)]=c[\Delta(\alpha)]+a\alpha+r[\Delta(\alpha)]+s[\Phi(\alpha)].$$
Therefore,
$$\begin{array}{ll}T(v_{c,\alpha}-v_{c,\beta})&=(a-s)(\alpha-\beta)+(r-s+c)([\Delta(\alpha)]-[\Delta(\beta)])\\
&=-cd(\alpha-\beta)-d([\Delta(\alpha)]-[\Delta(\beta)])=-d(v_{c,\alpha}-v_{c,\beta}).\end{array}$$
Thus, for any $v\in U'_c$, $T(v)=-dv$. Similarly, for any $v\in
U'_d$, $T(v)=-cv$. $U'_c$ and $U'_d$ are called graph submodules of
the permutation module $\FF \Omega$.

Now we study more details about the permutation modules for
$O_{2n}^{\pm}(2)$ or $U_{m}(2)$ acting on nonsingular points.
Let~$G$ be either $O_{2n}^{\pm}(2)$ or $U_{m}(2)$. Let $P$ and $P^0$
be the sets of nonsingular and singular points, respectively, of the
standard module associated with $G$. Define
\begin{equation}\label{Q}\begin{array}{cccc}Q:&\FF P &\rightarrow &\FF P^0\\
&\alpha&\mapsto &[\Lambda(\alpha)]\end{array}\end{equation} and
\begin{equation}\label{R}\begin{array}{cccc}R:&\FF P^0 &\rightarrow &\FF P\\
&\alpha&\mapsto &[\Gamma(\alpha)],\end{array}\end{equation} where
$\Lambda(\alpha)$ is the set of all singular points orthogonal to
$\alpha\in P$ and $\Gamma(\alpha)$ is the set of all nonsingular
points orthogonal to $\alpha\in P^0$. It is clear that $Q$ and~$R$
are $\FF G$-homomorphisms. Moreover, $\Im(Q|_{S(\FF P)})\neq 0,
T(\FF P^0)$ and $\Im(R|_{S(\FF P^0)})\neq 0, T(\FF P)$. We have
proved the following lemma which is a very important connection
between structures of $\FF P$ and $\FF P^0$.

\begin{lemma}\label{main1} There exists a nonzero submodule of $\FF P^0$
which  is not $T(\FF P^0)$ so that it is isomorphic to a quotient of
$\FF P$. Similarly, there exists a nonzero submodule of $\FF P$
which is not $T(\FF P)$ so that it is isomorphic to a quotient
of~$\FF P^0$. Moreover, these statements are still true if $\FF P$
and $\FF P^0$ are replaced by $S(\FF P)$ and $S(\FF P^0)$,
respectively.
\end{lemma}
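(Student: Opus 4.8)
The plan is to exhibit explicit nonzero $\FF G$-homomorphisms between $\FF P$ and $\FF P^{0}$ and to analyze their images. The maps $Q$ and $R$ defined in~(\ref{Q}) and~(\ref{R}) are the natural candidates: $Q$ sends a nonsingular point $\alpha$ to the formal sum $[\Lambda(\alpha)]$ of singular points perpendicular to it, and $R$ does the reverse. That $Q$ and $R$ are $\FF G$-homomorphisms is immediate from the $G$-equivariance of the perpendicularity relation, so the work lies entirely in showing that the images $\Im(Q)$ and $\Im(R)$ are nonzero, are not the trivial submodules $T(\FF P^{0})$ and $T(\FF P)$, and survive when one restricts to the codimension-$1$ submodules $S(\FF P)$ and $S(\FF P^{0})$.

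First I would check $\Im(Q)\neq 0$: this amounts to verifying that some nonsingular point $\alpha$ is perpendicular to at least one singular point, i.e.\ $\Lambda(\alpha)\neq\emptyset$, which is a standard fact about nondegenerate orthogonal and unitary spaces over $\FF_{2}$ (or $\FF_{4}$) in the relevant ranks ($n\geq 3$, $m\geq 4$). Next, to see $\Im(Q)\neq T(\FF P^{0})$ one observes that $[\Lambda(\alpha)]$ depends genuinely on $\alpha$ — two distinct nonsingular points have distinct (and not complementary-determined) perpendicular singular-point sets — so $Q$ is not a scalar-valued map followed by $\alpha\mapsto[P^{0}]$; equivalently $\Lambda(\alpha)\neq P^{0}$ and the $[\Lambda(\alpha)]$ are not all equal. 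Since $\Im(Q)$ is a submodule, being neither $0$ nor contained in (indeed equal to) $T(\FF P^{0})$, and $\FF P\to\Im(Q)$ is the desired quotient. The symmetric argument with $R$ gives the second assertion. For the $S(\cdot)$ versions one restricts $Q$ to $S(\FF P)$ (respectively $R$ to $S(\FF P^{0})$) and must check that the restricted map is still nonzero with image not equal to $T(\FF P^{0})$ (respectively $T(\FF P)$); here one uses that $S(\FF P)$ has codimension $1$, so $Q(S(\FF P))$ has codimension at most $1$ in $\Im(Q)$, and one rules out the degenerate possibilities by evaluating $Q$ on a difference $\alpha-\beta$ of two nonsingular points and showing $Q(\alpha-\beta)=[\Lambda(\alpha)]-[\Lambda(\beta)]\neq 0$ while this element, lying in $S(\FF P^{0})$, cannot be a nonzero multiple of $[P^{0}]$.

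The main obstacle I expect is the bookkeeping needed to guarantee that the relevant images are genuinely \emph{not} $T(\FF P^{0})$ or $T(\FF P)$ in \emph{every} odd characteristic $\ell$, since over a small field a sum like $[\Lambda(\alpha)]$ could in principle collapse: for instance $|\Lambda(\alpha)|$ might be divisible by $\ell$, or the differences $[\Lambda(\alpha)]-[\Lambda(\beta)]$ might all vanish mod $\ell$ for a bad prime. To handle this cleanly I would count $|\Lambda(\alpha)|$ and $|\Lambda(\alpha)\cap\Lambda(\beta)|$ explicitly (these are standard incidence numbers for the rank-$3$ geometry, expressible in terms of the parameters $a,b,r,s$ of Section~2) and argue that $Q$ restricted to $S(\FF P)$ cannot be identically zero because that would force an implausible divisibility among these incidence counts simultaneously for all $\ell$; since we only need \emph{one} witness $\alpha$ (or one pair $\alpha,\beta$) per group and the geometry is homogeneous, a single generic computation suffices. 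Once nonvanishing is secured, the conclusion that the image is a nonzero, non-trivial submodule isomorphic to a quotient of the source is formal.
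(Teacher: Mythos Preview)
Your proposal is correct and follows exactly the paper's approach: the paper's entire proof is the sentence preceding the lemma, which defines $Q$ and $R$, notes they are $\FF G$-homomorphisms, and asserts that $\Im(Q|_{S(\FF P)})$ and $\Im(R|_{S(\FF P^{0})})$ are neither $0$ nor the relevant $T(\cdot)$. Your write-up simply fills in the verifications the paper leaves implicit.

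One remark: the ``main obstacle'' you flag is not really an obstacle. The element $[\Lambda(\alpha)]$ is a $\{0,1\}$-vector in $\FF P^{0}$, so it is nonzero as soon as $\Lambda(\alpha)\neq\emptyset$, independent of $\ell$; likewise $[\Lambda(\alpha)]-[\Lambda(\beta)]$ is a $\{-1,0,1\}$-vector, and once you know $\Lambda(\alpha)\not\subseteq\Lambda(\beta)$ and $\Lambda(\beta)\not\subseteq\Lambda(\alpha)$ it has both a $+1$ and a $-1$ coefficient, hence cannot equal $c[P^{0}]$ in any odd characteristic. No incidence-count divisibility analysis is needed.
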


Let $\rho$ and $\rho^0$ be complex permutation characters of $G$
acting on $\FF P$ and $\FF P^0$, respectively. Since these actions
are rank~$3$, it is well-known that both $\rho$ and $\rho^0$ have
$3$ constituents, all of multiplicity~$1$ and exactly one of them is
the trivial character. This and Lemma~\ref{main1} imply the
following:

\begin{lemma}\label{main2} $\rho$ and $\rho^0$ have a common constituent which is not
trivial.
\end{lemma}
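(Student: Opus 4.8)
The plan is to combine the two representation-theoretic inputs we already have: the rank~$3$ decomposition of the permutation characters $\rho$ and $\rho^0$, and the module-theoretic bridge provided by Lemma~\ref{main1}. Write $\rho = 1_G + \chi_1 + \chi_2$ and $\rho^0 = 1_G + \chi_1^0 + \chi_2^0$, where all five non-trivial characters $\chi_1,\chi_2,\chi_1^0,\chi_2^0$ are irreducible and occur with multiplicity one. The claim is exactly that $\{\chi_1,\chi_2\}\cap\{\chi_1^0,\chi_2^0\}\neq\emptyset$. I would argue by contradiction: suppose these two sets are disjoint, so that as complex characters $\rho$ and $\rho^0$ share only the trivial constituent.

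First I would invoke Lemma~\ref{main1} (applied over $\FF=\CC$, or equivalently at the level of ordinary characters, since a $\CC G$-module is determined by its character and the homomorphisms $Q,R$ of~(\ref{Q}),~(\ref{R}) are defined integrally hence make sense in characteristic~$0$): there is a nonzero $\CC G$-submodule $M$ of $\CC P^0$, with $M\neq T(\CC P^0)$, that is isomorphic to a quotient of $\CC P$. Passing to characters, the character of $M$ is a sum of some subset of $\{1_G,\chi_1^0,\chi_2^0\}$ and simultaneously a sum of some subset of $\{1_G,\chi_1,\chi_2\}$ (as a quotient of the multiplicity-free module $\CC P$). Under the disjointness assumption the only common constituent available is $1_G$, forcing the character of $M$ to be $m\cdot 1_G$ for some $m\geq 0$; since $M\neq 0$ we get $m\geq 1$, and since $\rho$ and $\rho^0$ each contain $1_G$ with multiplicity one, in fact $m=1$, so $M\cong T(\CC P^0)$ — contradicting the conclusion $M\neq T(\CC P^0)$ of Lemma~\ref{main1}. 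This yields the lemma.

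The step I expect to be the genuine content is making precise that "isomorphic to a quotient of $\CC P$" forces the character of $M$ to be multiplicity-free drawn from $\{1_G,\chi_1,\chi_2\}$; this is immediate because $\CC P=T\oplus S'\oplus S''$ decomposes as a direct sum of three pairwise non-isomorphic irreducibles (rank~$3$, complete reducibility over $\CC$), so every quotient is a direct summand whose constituents are a subset of those three, each with multiplicity at most one. Everything else is bookkeeping. The only mild subtlety is that Lemma~\ref{main1} as stated is about $\FF G$-modules for $\FF$ algebraically closed; but its proof rests on the homomorphisms $Q,R$ together with the stated non-degeneracy conditions $\Im(Q|_{S(\FF P)})\neq 0,T(\FF P^0)$ and $\Im(R|_{S(\FF P^0)})\neq 0,T(\FF P)$, all of which hold equally for $\FF=\CC$, so we may simply take $\ell=0$ there. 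Alternatively one can run the whole argument at the level of the integral permutation modules and tensor with $\CC$ at the end. I would present the contradiction argument as above, as it is the cleanest.
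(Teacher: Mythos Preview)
Your argument is correct and is exactly the approach the paper has in mind: the paper's proof consists of the single remark that the rank~$3$ decomposition of $\rho$ and $\rho^0$ together with Lemma~\ref{main1} yields the result, and you have simply written out that implication carefully. Your attention to applying Lemma~\ref{main1} over $\CC$ is appropriate, since the maps $Q,R$ and the non-degeneracy statements are valid in characteristic~$0$ just as stated.
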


Finally, we record here a basic property of self-dual modules over a
group algebra.

\begin{lemma} \label{main3} If $U$ is a self-dual $\FF G$-module
having a self-dual, simple socle $X$, then both the head of $U$ and
the top layer of the socle series of $U$ are isomorphic to~$X$.
\end{lemma}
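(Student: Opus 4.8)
The plan is to exploit self-duality to transfer information about the socle of $U$ to its head, and then to iterate the argument one layer down. First I would use the hypothesis that $X=\soc(U)$ is simple together with self-duality of $U$. Dualizing the inclusion $X\hookrightarrow U$ gives a surjection $U^*\twoheadrightarrow X^*$; since $U\cong U^*$ and, by hypothesis, $X\cong X^*$, this yields a surjection $U\twoheadrightarrow X$, i.e. $X$ is a quotient of $U$. To see that the \emph{entire} head is $X$ (not just that $X$ is one of its constituents), note that the head of $U$ is semisimple, say $\mathrm{head}(U)\cong\bigoplus_i Y_i$ with the $Y_i$ simple; dualizing shows $\soc(U^*)\cong\bigoplus_i Y_i^*$, and $\soc(U^*)\cong\soc(U)=X$ is simple, forcing a single summand, hence $\mathrm{head}(U)\cong X^*\cong X$.

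Next I would handle the top layer of the socle series. Recall the socle series $0=U_0\subset U_1\subset U_2\subset\cdots\subset U_k=U$ with $U_1=\soc(U)=X$ and $U_{i+1}/U_i=\soc(U/U_i)$; the ``top layer'' is $U/U_{k-1}$. The clean way to identify it is to observe that the socle series of $U$ and the radical series of $U$ have the same length, and that for a self-dual module the radical series is dual to the socle series: applying $(-)^*$ to $0\subset U_1\subset\cdots\subset U_k=U$ gives a filtration of $U^*\cong U$ whose successive quotients are $U_i^*/U_{i-1}^*\cong(U_{k-i+1}/U_{k-i})^*$, and this dual filtration is precisely the radical series of $U$. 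Hence the bottom radical-layer $\rad^{k-1}(U)=\rad^{k-1}(U)/\rad^k(U)$ is isomorphic to $(U/U_{k-1})^*$, while the top socle-layer $U/U_{k-1}$ is isomorphic to $(U_1)^*=X^*\cong X$. So the top layer of the socle series is $X$, as claimed.

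Alternatively, and perhaps more transparently, I would argue directly: the top socle layer $U/U_{k-1}$ is a quotient of $U$, so its head is a quotient of $\mathrm{head}(U)\cong X$, hence $\mathrm{head}(U/U_{k-1})$ is a quotient of the simple module $X$, so it equals $X$ (or $0$, impossible for a nonzero module); but $U/U_{k-1}=\soc(U/U_{k-1})$ is semisimple, so it equals its own head, giving $U/U_{k-1}\cong X$.

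The only genuinely delicate point is the duality bookkeeping in the second paragraph — getting straight that $(-)^*$ converts the socle filtration into the radical filtration (with indices reversed) and that the corresponding layers are dual — but this is a standard fact about finite-dimensional modules over a finite group algebra and involves no computation specific to our groups. The first and third paragraphs are essentially formal consequences of the definitions of socle, head, and duality.
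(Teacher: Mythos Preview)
Your argument is correct. The paper itself gives no direct proof but simply cites Lemmas~8.2 and~8.4 of Landrock~\cite{La}; those lemmas record exactly the duality facts you are unpacking, namely that $(-)^*$ interchanges $\soc$ and $\mathrm{head}$ and, more generally, carries the socle filtration of $U$ to the radical filtration of $U^*$. So your approach and the paper's coincide in substance, with yours being self-contained. Your third-paragraph alternative (the head of any quotient of $U$ is a quotient of $\mathrm{head}(U)\cong X$, and the top socle layer is semisimple hence equals its own head) is a particularly clean way to handle the second claim without invoking the full socle/radical duality.
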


\begin{proof} This follows from Lemmas~8.2 and~8.4 of~\cite{La}.
\end{proof}


\section{The orthogonal groups $O^+_{2n}(2)$}\label{sectionO+} Let $V$ be a vector space of dimension
$2n\geq6$ over the field of $2$ elements $\mathbb{F}_2=\{0,1\}$. Let
$Q(\cdot)$ be a quadratic form on $V$ of type $+$ and
$(\cdot,\cdot)$ be the non-degenerate symmetric bilinear form on $V$
associated with $Q$ so that $Q(au+bv)=a^2Q(u)+b^2Q(v)+ab(u,v)$ for
any $a,b\in \FF_2,u,v\in V$. Then $G=O^+_{2n}(2)$ is the orthogonal
group of linear transformations of $V$ preserving $Q$.

We choose a basic of $V$ consisting of vectors
$\{e_1,...,e_n,f_1,...,f_n\}$ so that $Q(e_i)=Q(f_j)=0$,
$(e_i,e_j)=(f_i,f_j)=0$, and $(e_i,f_j)=\delta_{ij}$ for all
$i,j=1,...,n$. Let $P$ be the set of all nonsingular points in $V$.
Then $P=\{\langle\sum_1^n(a_ie_i+b_if_i)\rangle\mid
\sum_1^na_ib_i=1\}$ and $|P|=2^{2n-1}-2^{n-1}$.

Let $\alpha=e_1+f_1\in P$. Note that if $\delta$ is orthogonal to
$\alpha$ and $\epsilon$ is not, then $\delta$ and $\epsilon$ are in
different orbits of the action of $G_\alpha$ on $P$. With no loss,
we assume that $\Delta(\alpha)$ consists of elements of $P$ which
are not orthogonal to $\alpha$ and $\Phi(\alpha)$ consists of
elements in $P\setminus\{\alpha\}$ which are orthogonal to $\alpha$.
We have
$$\begin{array}{rl}\Delta(\alpha)&=\{\langle v\rangle\in P\mid (e_1+f_1,v)=1\}\\
&=\{\langle\sum_1^n(a_ie_i+b_if_i)\rangle\in P\mid a_1+b_1=1\}\\
&=\{\langle e_1+v_1\rangle,\langle f_1+v_1\rangle\mid \langle
v_1\rangle \text{ is a nonsingular point in } V_1\},\end{array}$$
where $V_1=\langle e_2,...,e_n,f_2,...,f_n\rangle$. So
$a=|\Delta(\alpha)|=2^{2n-2}-2^{n-1}$ and therefore,
$b=|\Phi(\alpha)|=2^{2n-2}-1$.

Let $\beta=\langle e_1+e_2+f_2\rangle\in\Delta(\alpha)$. Then
$$\begin{array}{cl}\Delta(\alpha)\cap\Delta(\beta)&=\{\langle v\rangle\in P\mid
(e_1+f_1,v)=(e_1+e_2+f_2,v)=1\}\\
&=\{\langle\sum_1^n(a_ie_i+b_if_i)\rangle\in P\mid
a_1+b_1=b_1+a_2+b_2=1\}\end{array}.$$ An easy calculation shows that
$$r=|\Delta(\alpha)\cap\Delta(\beta)|=2^{2n-3}-2^{n-2}.$$
Similarly, if $\gamma=\langle e_2+f_2\rangle\in\Phi(\alpha)$ then we
have
$$\Delta(\alpha)\cap\Delta(\gamma)=\{\langle v\rangle\in P\mid a_1+b_1=a_2+b_2=1\}$$
and
$$s=|\Delta(\alpha)\cap\Delta(\gamma)|=2^{2n-3}-2^{n-1}.$$
Now equation~(\ref{quadratic}) becomes
$$x^2+2^{n-2}x-2^{2n-3}=0,$$
which has two roots $2^{n-2}$ and $-2^{n-1}$. By
Lemma~\ref{liebeck}, $U'_c=S(\FF P)$ for any $c\neq 2^{n-2},
-2^{n-1}$. As in the study of permutation modules for finite
classical group acting on singular points (see~\cite{L1,L2}), the
graph submodules $U'_{2^{n-2}}$ and $U'_{-2^{n-1}}$ of $\FF P$ are
minimal when $\Char(\FF)\neq 2$ in the following sense.

\begin{proposition}\label{mainO+} Suppose that the characteristic of $\mathbb{F}$ is
odd. Then every nonzero $\FF G$-submodule of $\FF P$ either is
$T(\FF P)$ or contains a graph submodule, which is $U'_{2^{n-2}}$ or
$U'_{-2^{n-1}}$.
\end{proposition}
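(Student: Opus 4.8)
The plan is to show that any nonzero submodule $M \subseteq \FF P$ other than $T(\FF P)$ must contain one of $v_{c,\alpha} - v_{c,\beta}$ for $c \in \{2^{n-2}, -2^{n-1}\}$, and then invoke Lemma~\ref{liebeck} (together with the eigenvalue computation for $T$) to conclude that $M$ contains the corresponding graph submodule $U'_c$. First I would pick a nonzero element $w = \sum_{\omega} a_\omega \omega \in M$. The self-duality of $\FF P$ and the $G$-invariance of $\langle\cdot,\cdot\rangle$ give us two complementary cases to organize the argument: either $M \subseteq S(\FF P)$ (i.e.\ all coefficient sums vanish) or not. If $M \not\subseteq S(\FF P)$, then $M + S(\FF P) = \FF P$ since $S(\FF P)$ has codimension~$1$; I would then analyze $M \cap S(\FF P)$, which is a submodule of $S(\FF P)$, and reduce to the case $M \subseteq S(\FF P)$ after handling the trivial quotient separately (the only obstruction to $M \supseteq S(\FF P)$ is a trivial composition factor, which is $T(\FF P)$ by self-duality).

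The core case is $0 \neq M \subseteq S(\FF P)$. Here I would apply the linear map $T$ of~(\ref{T}): since $T$ is an $\FF G$-endomorphism, $T$ restricts to $M$, and its only possible eigenvalues on $S(\FF P)$ relevant to graph submodules are $-2^{n-2}$ and $-(-2^{n-1}) = 2^{n-1}$ (the negatives of the roots $c,d$ of~(\ref{quadratic})), plus possibly others coming from the restriction of $T$ to $S(\FF P)/(U'_c + U'_d)$. The key structural input is Lemma~\ref{liebeck}: for any $c \notin \{2^{n-2},-2^{n-1}\}$ we have $U'_c = S(\FF P)$, and for the two exceptional values the elements $v_{c,\alpha} - v_{c,\beta}$ span $U'_c$. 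Thus I want to argue that $T - (-d)\cdot\id$ acting on $S(\FF P)$ has kernel exactly $U'_c$ (and similarly with $c,d$ swapped), so that by factoring the minimal polynomial of $T|_{S(\FF P)}$ and using that $M$ is $T$-invariant, $M$ must meet $\ker(T - (-d)\id) = U'_c$ or $\ker(T - (-c)\id) = U'_d$ nontrivially — and any nonzero submodule of $U'_c$ is all of $U'_c$ because $U'_c$ is generated by the $G$-orbit of a single $v_{c,\alpha}-v_{c,\beta}$ and these are permuted transitively.

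More concretely, the step I would actually carry out is: pick $0\neq w\in M\cap S(\FF P)$, write $w = \sum a_\omega \omega$ with $\sum a_\omega = 0$, and use the action of $G$ and the transitivity on $P$ and on ordered pairs in each suborbit to produce from $w$, by taking suitable $\FF$-linear combinations of $G$-translates, a nonzero element of the form $c(\alpha-\beta) + [\Delta(\alpha)] - [\Delta(\beta)]$; the parameters $a,b,r,s$ computed above for $O^+_{2n}(2)$ force $c$ to satisfy~(\ref{quadratic}) whenever such an element is an eigenvector of $T$, which it must be since $M$ need not be $T$-invariant pointwise but $\FF P$ decomposes under $T$ into at most three $T$-eigenspaces in cross characteristic. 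I expect the main obstacle to be precisely this extraction step — showing that an arbitrary nonzero $w\in M$ can be manipulated within $M$ down to one of the distinguished generators $v_{c,\alpha}-v_{c,\beta}$, rather than merely producing some element of $S(\FF P)$ that need not lie in a graph submodule. I would overcome it by exploiting self-duality: if $M$ contained neither graph submodule, then $M$ would be orthogonal to both $U'_c$ and $U'_d$ by a pairing argument built on the identity $\langle v_{c,\alpha}, v_{d,\beta}\rangle = s$ from Lemma~\ref{liebeck} and $\langle U'_c, U_d\rangle = 0$; since $U'_c + U'_d$ has small codimension in $S(\FF P)$ (as we know once the composition factors are pinned down, or directly from the rank~$3$ character-theoretic fact that $\rho$ has only three constituents), $M^\perp$ would be forced to be too large, so $M \subseteq M^{\perp\perp}$ would be forced to be $0$ or $T(\FF P)$, a contradiction. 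The rank~$3$ hypothesis (only three complex constituents of $\rho$, hence at most three $T$-eigenspaces after reduction mod~$\ell$ when $\ell$ is odd) is what keeps the number of cases finite and makes this work.
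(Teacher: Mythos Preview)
Your proposal has real gaps; let me name the two most serious ones.

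\medskip

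\textbf{The eigenvalue/diagonalization argument collapses when $\ell=3$.} You want to factor the minimal polynomial of $T|_{S(\FF P)}$ and project a nonzero $w\in M$ into one of the $T$-eigenspaces. When $\ell\neq 2,3$ this at least makes sense, because the two roots $-c,-d$ of the characteristic quadratic are distinct and (using the identity $v_{c,\alpha}-v_{d,\alpha}=(c-d)\alpha$) one sees $U'_c\oplus U'_d=S(\FF P)$. But when $\ell=3$ the two roots coincide ($2^{n-2}=-2^{n-1}$ in $\FF$), so $U'_c=U'_d$ is a \emph{proper} submodule of $S(\FF P)$ and $T$ need not be diagonalizable on $S(\FF P)$. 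Your ``at most three $T$-eigenspaces'' remark does not rescue this: the proposition must hold for $\ell=3$, and indeed that is exactly the case where the submodule lattice of $\FF P$ is subtle. The paper's argument is designed to work uniformly for all odd~$\ell$ and does not rely on diagonalizing $T$.

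\medskip

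\textbf{You assume $U'_c$ is simple without justification.} You write that ``any nonzero submodule of $U'_c$ is all of $U'_c$ because $U'_c$ is generated by the $G$-orbit of a single $v_{c,\alpha}-v_{c,\beta}$.'' That inference is false: being cyclic over $\FF G$ does not imply simplicity (e.g.\ $\FF P$ itself is generated by the $G$-orbit of a single point). Simplicity of the graph submodules is in fact \emph{deduced from} the proposition you are trying to prove, so invoking it here is circular. The same circularity infects your orthogonality fallback: there is no reason that ``$M$ does not contain $U'_c$'' should imply ``$M\perp U'_c$'', and the identity $\langle U'_c,U_d\rangle=0$ from Lemma~\ref{liebeck} says nothing about an arbitrary $M$.

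\medskip

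What actually works --- and what the paper does --- is your ``concrete extraction step,'' but carried out in full rather than gestured at. One chooses an element $u\in M$ with two unequal coefficients at a pair of orthogonal points, conjugates so that these are $\phi_1=\langle e_2+f_2\rangle$ and $\phi_2=\langle e_1+e_2+f_2\rangle$, and then averages over an explicit unipotent subgroup $H$ (regular on a set $\Delta$) and a subgroup $K\leq H$ with prescribed orbits. The orbit calculations (the content of Lemmas~\ref{delta} and~\ref{phi}) are precisely what let you strip away all coefficients except those at $\phi_1,\phi_2$ and on $[\Delta(\phi_1)]-[\Delta(\phi_2)]$, producing an element
\[
2^{6n-8}(\phi_1-\phi_2)+f\bigl([\Delta(\phi_1)]-[\Delta(\phi_2)]\bigr)\in M.
\]
Oddness of $\ell$ is used only to ensure the power of $2$ in front is nonzero. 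From here a two-case analysis on $f$ finishes. The substance of the proof is this averaging construction; your proposal identifies it as an option but does not supply it.
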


We use some ideas from the proof of a similar result for the
permutation module of $G$ acting on singular points (see~\cite{L1}),
but the proof presented here is much simpler. Define
$$\Delta_1=\{\langle\sum_{i=1}^n(a_ie_i+b_if_i)\rangle\in P\mid b_1=1, a_2+b_2=1\}$$
and
$$\Delta_2=\{\langle\sum_{i=1}^n(a_ie_i+b_if_i)\rangle\in P\mid b_1=1, a_2+b_2=0\}.$$
Putting $\Delta=\Delta_1\cup\Delta_2$ and $\Phi=P\backslash \Delta$.
We easily see that $$[\Delta(\langle
e_2+f_2\rangle)]-[\Delta(\langle
e_1+e_2+f_2\rangle)]=[\Delta_1]-[\Delta_2]$$ and
$$\Phi=\{\langle\sum_{i=1}^n(a_ie_i+b_if_i)\rangle\in P\mid b_1=0\}.$$

Consider a subgroup $H<G$ consisting of orthogonal transformations
sending elements of the basis $\{e_1$, $f_1$, $e_2$,
$f_2,...,e_n,f_n\}$ to the those of the basis
$\{e_1,f_1+\sum_{i=1}^na_ie_i+\sum_{i=2}^nb_if_i,e_2-b_2e_1,f_2-a_2e_1,...,e_n-b_ne_1,f_n-a_ne_1\}$
respectively, where $a_i,b_i\in\FF_2$ and $a_1=\sum_{i=2}^na_ib_i$.
Let $K$ be the subgroup of $H$ consisting of transformations fixing
$e_2+f_2$. Let $P_1$ be the set of nonsingular points in
$V_1=\langle e_2,f_2,...,e_n,f_n\rangle$. For each $\langle
w\rangle\in P_1$, we define $B_w=\{\langle w\rangle, \langle
e_1+w\rangle\}$. Note that $H$ is the stabilizer of the subspace
series $0=V_0\leq V_1\leq V_2\leq V_3=V $ where $V_1=\langle
e_1\rangle$, $V_2=\langle e_1\rangle^\perp$, and $[V_i,H]\leq
V_{i-1}$. An element $h\in H$ is uniquely determined by the image
$f_1h=f_1+\sum_{i=1}^na_ie_i+\sum_{i=2}^nb_if_i$. In particular, $H$
is regular on $\Delta$, and the lemmas follow directly.

\begin{lemma}\label{delta} The following holds:
\begin{enumerate}
\item[(i)] $|H|=2^{2n-2}$, $|K|=2^{2n-3}$, $|\Delta|=2^{2n-2}$, and $|\Delta_1|=|\Delta_2|=2^{2n-3}$;
\item[(ii)] $H$ acts transitively on $\Delta$ and $K$ has $2$ orbits
$\Delta_1,\Delta_2$ on $\Delta$.
\end{enumerate}
\end{lemma}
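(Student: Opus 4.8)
The plan is to make everything explicit through the parametrization of $H$ recorded just before the statement, so that (i) becomes a coordinate count and (ii) becomes an orbit--stabilizer argument over $\FF_2$.

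First I would verify the claim $|H| = 2^{2n-2}$ by identifying $H$ with the flag--stabilizer subgroup $\{h \in G : h|_{V_1} = \id,\ [V_2,h] \leq V_1,\ [V,h] \leq V_2\}$, where $V_1 = \langle e_1\rangle$ and $V_2 = \langle e_1\rangle^\perp$. Such an $h$ fixes $e_1$, sends $e_i \mapsto e_i + \mu_i e_1$ and $f_i \mapsto f_i + \nu_i e_1$ for $i \geq 2$, and sends $f_1 \mapsto f_1 + w$ with $w = \lambda e_1 + \sum_{i \geq 2}(a_i e_i + b_i f_i) \in V_2$; imposing that $h$ preserve $(\cdot,\cdot)$ forces $\mu_i = (e_i, w) = b_i$ and $\nu_i = (f_i, w) = a_i$, while $Q(f_1 + w) = 0$ forces the single relation $\lambda = \sum_{i \geq 2} a_i b_i$. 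This is exactly the displayed assignment, so in particular $H$ is a genuine subgroup (closure under composition being automatic for a flag stabilizer), an element of $H$ is determined by $f_1 h - f_1$, and $|H| = 2^{2n-2}$ since the $2n-2$ coordinates $a_i, b_i$ ($i \geq 2$) are free.

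Next I would record the two elementary identities $b_1 = (e_1, v)$ and $a_2 + b_2 = (e_2 + f_2, v)$ for $v = \sum (a_i e_i + b_i f_i)$. From these, $\Delta$ is the set of nonsingular points not in $V_2$, so $H$ (which fixes $e_1$) stabilizes $\Delta$; and $\Delta_1, \Delta_2$ are cut out of $\Delta$ by $(e_2 + f_2, v) = 1$, resp.\ $= 0$, so $K$ (which fixes $e_1$ and $e_2 + f_2$) stabilizes each of $\Delta_1, \Delta_2$. The formula $(e_2 + f_2)h = e_2 + f_2 + (a_2 + b_2)e_1$ gives $K = \{h \in H : a_2 = b_2\}$, hence $|K| = 2^{2n-3}$. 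For the remaining cardinalities I would take $\langle e_1 + f_1 \rangle \in \Delta$ as base point: for $h \in H$ one computes $(e_1 + f_1)h = (1 + a_1)e_1 + f_1 + \sum_{i \geq 2}(a_i e_i + b_i f_i)$, while conversely a nonsingular vector with $f_1$-coordinate $1$ is forced to have $e_1$-coordinate $1 + \sum_{i \geq 2} a_i b_i$; hence $h \mapsto \langle (e_1 + f_1)h\rangle$ is a bijection $H \to \Delta$. This yields $|\Delta| = 2^{2n-2}$, shows $H$ acts regularly (transitively and freely) on $\Delta$, and --- since this bijection carries $\Delta_1$ onto $\{h : a_2 + b_2 = 1\}$ and $\Delta_2$ onto $\{h : a_2 + b_2 = 0\}$, each of cardinality $2^{2n-3}$ --- gives $|\Delta_1| = |\Delta_2| = 2^{2n-3}$, completing (i) and the transitivity assertion in (ii).

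Finally, for the orbits of $K$: being a subgroup of a group acting regularly on $\Delta$, $K$ acts freely on $\Delta$; it stabilizes $\Delta_1$ and $\Delta_2$, and $|K| = 2^{2n-3} = |\Delta_1| = |\Delta_2|$, so every $K$-orbit inside $\Delta_1$ has cardinality $|K| = |\Delta_1|$, forcing $\Delta_1$ to be a single orbit, and likewise $\Delta_2$; since $\Delta = \Delta_1 \sqcup \Delta_2$, these are the only two orbits. I do not expect a genuine obstacle; the one point demanding care is the first step --- confirming that the explicit assignment really defines orthogonal maps closed under composition --- which the flag--stabilizer description disposes of cleanly, after which the argument is bookkeeping over $\FF_2$.
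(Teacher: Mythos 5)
Your proof is correct and follows essentially the same route as the paper: identify $H$ with the stabilizer of the flag $0\leq\langle e_1\rangle\leq\langle e_1\rangle^\perp\leq V$ acting trivially on successive quotients, observe that $h\in H$ is determined by $f_1h$, and deduce regularity of $H$ on $\Delta$, from which the cardinalities and the two $K$-orbits follow. The paper leaves these verifications as "the lemmas follow directly"; you have simply written out the bookkeeping in full, correctly.
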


\begin{lemma}\label{phi} The following holds:
\begin{enumerate}
\item[(i)] $\Phi=\bigcup_{\langle w\rangle\in P_1}B_w$;
\item[(ii)] $K$ fixes $B_{e_2+f_2}$ point-wise and is transitive
on $B_w$ for every $\langle e_2+f_2\rangle\neq\langle w\rangle\in P_1$;
\item[(iii)] $H$ acts transitively on $B_w$ for every $\langle w\rangle\in P_1$.
\end{enumerate}
\end{lemma}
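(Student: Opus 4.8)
The plan is to verify the three assertions in turn, using the explicit description of $H$ as the regular stabilizer of the subspace series $0 \leq V_1 \leq V_2 \leq V_3 = V$ together with the formula $f_1 h = f_1 + \sum_{i=1}^n a_i e_i + \sum_{i=2}^n b_i f_i$ that parametrizes elements of $H$. For part~(i), I would argue directly: a nonsingular point $\langle v \rangle$ lies in $\Phi$ precisely when its $f_1$-coordinate $b_1$ vanishes, i.e.\ $v = w$ or $v = e_1 + w$ for a unique vector $w \in V_1 = \langle e_2, f_2, \dots, e_n, f_n\rangle$ lying in the span of the remaining basis vectors (with $a_1 \in \{0,1\}$ free). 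One then checks that $Q(w) = Q(e_1 + w)$ since $Q(e_1) = 0$ and $(e_1, w) = 0$ for $w \in V_1$, so $\langle w \rangle$ is nonsingular if and only if $\langle e_1 + w\rangle$ is, and $\langle w\rangle \in P_1$ exactly in that case. Hence $\Phi$ is the disjoint union of the two-element sets $B_w = \{\langle w\rangle, \langle e_1 + w\rangle\}$ as $\langle w\rangle$ ranges over $P_1$, which is~(i); a quick count $|\Phi| = 2|P_1| = 2(2^{2n-3} - 2^{n-2}) \cdot 2 = \dots$ cross-checks against $b = |\Phi(\alpha)| = 2^{2n-2}-1$ after accounting for $\alpha$ itself (note $\Phi$ here denotes $\Phi(\alpha) \cup \{\alpha\}$ minus the adjustments—I would state the bookkeeping carefully).

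For part~(ii), recall $K$ is the subgroup of $H$ fixing $e_2 + f_2$. Since every $h \in H$ fixes $e_1$ and acts trivially modulo $V_2 = \langle e_1\rangle^\perp$ on $V_2$, the action of $h$ on any $w \in V_1 \subseteq V_2$ has the form $wh = w + (\text{something in } \langle e_1\rangle)$; thus $h$ fixes each point $\langle w \rangle$ exactly when that "something" is $0$, i.e.\ when $e_1 \notin$ the image correction. The key computation is to read off, from the displayed basis image, the scalar $c_w(h) \in \FF_2$ with $wh = w + c_w(h)\, e_1$; this is a linear functional of $w$ (for fixed $h$) and of $h$. I would show: $K$ fixes $e_2 + f_2$ means $c_{e_2+f_2}(h) = 0$ for all $h \in K$, and then $B_{e_2+f_2} = \{\langle e_2+f_2\rangle, \langle e_1 + e_2 + f_2\rangle\}$ is fixed pointwise by $K$; whereas for $\langle w \rangle \neq \langle e_2+f_2\rangle$ in $P_1$, the functional $h \mapsto c_w(h)$ is nontrivial on $K$ (this is where I expect the main work—showing $K$ is large enough to move these points but not $e_2+f_2$), so $K$ swaps $\langle w\rangle$ and $\langle e_1 + w\rangle$, acting transitively on the two-element set $B_w$. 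Part~(iii) is then easier: for the full group $H$ (no longer required to fix $e_2+f_2$), the functional $h \mapsto c_w(h)$ is nontrivial for \emph{every} $\langle w\rangle \in P_1$ including $w = e_2 + f_2$, since we may choose $h \in H$ with $f_1 h = f_1 + e_1$ (take $a_1 = 1$, all other $a_i, b_i = 0$, which is consistent as $a_1 = \sum_{i\geq 2} a_i b_i = 0$—so in fact one must pick a legitimate parameter vector realizing $c_w(h) = 1$), giving $H$ transitive on each $B_w$.

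The main obstacle will be part~(ii): pinning down exactly when the functional $w \mapsto c_w(h)$ distinguishes $e_2 + f_2$ from the other nonsingular points of $V_1$, i.e.\ checking that the constraint "$h$ fixes $e_2 + f_2$" cuts $H$ down to a subgroup $K$ of index $2$ that still acts transitively on every other $B_w$. Concretely, from the basis images $e_j h = e_j - b_j e_1$ and $f_j h = f_j - a_j e_1$ for $j \geq 2$, one computes $c_w(h)$ for $w = \sum_{j\geq 2}(x_j e_j + y_j f_j)$ as a bilinear pairing between $(x_j, y_j)$ and $(b_j, a_j)$; fixing $e_2+f_2$ imposes one linear condition $a_2 + b_2 = 0$ on the parameters (consistent with $|K| = 2^{2n-3} = |H|/2$ from Lemma~\ref{delta}(i)), and one must verify the residual pairing is still nonzero on $P_1 \setminus \{\langle e_2+f_2\rangle\}$. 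Everything else reduces to the regularity of $H$ on $\Delta$ and straightforward $\FF_2$-linear algebra, so once the $c_w$ functional is set up cleanly the three parts fall out quickly.
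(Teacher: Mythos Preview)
Your proposal is correct and follows exactly the approach the paper intends: the paper gives no detailed proof beyond noting that $H$ stabilizes the flag $0 \leq \langle e_1\rangle \leq \langle e_1\rangle^\perp \leq V$ with $[V_i,H]\leq V_{i-1}$ and is parametrized by the image of $f_1$, then asserts that ``the lemmas follow directly.'' Your computation of the functional $c_w(h)=\sum_{j\geq 2}(x_jb_j+y_ja_j)$ and the observation that the defining condition $a_2+b_2=0$ for $K$ still permits $c_w(h)=1$ for every nonsingular $w\in V_1$ other than $e_2+f_2$ (since such $w$ must have a nonzero coordinate at some $e_j$ or $f_j$ with $j\geq 3$) is precisely the omitted argument; just tidy the $a_1=1$ slip in part~(iii) by taking, e.g., $a_2=1$ and all other parameters zero.
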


{\textbf{Proof of Proposition \ref{mainO+}}}. Suppose that $U$ is a
nonzero submodule of $\FF P$. We assume that $U$ is not $T(\FF P)$.
Then $U$ contains an element of the form
$$u=a\phi_1+b\phi_2+\sum_{\delta\in P\backslash\{\phi_1,\phi_2\}}a_\delta\delta,$$
where $a,b,a_\delta\in\FF$, $\phi_1\neq\phi_2\in P$, and $a\neq b$.
If $(\phi_1,\phi_2)=1$, we choose an element $\phi_3\in P$ so that
$(\phi_1,\phi_3)=(\phi_2,\phi_3)=0$. Since $a\neq b$, the
coefficient of $\phi_3$ in $u$ is different from either $a$ or $b$.
Therefore, with no loss, we can assume $(\phi_1,\phi_2)=0$.

Since $(e_2+f_2,e_1+e_2+f_2)=0$, there exists $g'\in G$ such that
$\phi_1g'=\langle e_2+f_2\rangle$ and $\phi_2g'=\langle
e_1+e_2+f_2\rangle$. Therefore, we can assume that
$u=a\phi_1+b\phi_2+\sum_{\delta\in
P\backslash\{\phi_1,\phi_2\}}a_\delta\delta\in U$ with
$\phi_1=\langle e_2+f_2\rangle$ and $\phi_2=\langle
e_1+e_2+f_2\rangle$.

Take an element $g\in G$ such that $e_1g=e_1$ and
$(e_2+f_2)g=e_1+e_2+f_2$. Then $\phi_1 g=\phi_2$ and $\phi_2
g=\phi_1$. So we have
$$u-ug=(a-b)\phi_1-(a-b)\phi_2+
\sum_{\delta\in P\backslash\{\phi_1,\phi_2\}}b_\delta\delta\in U,$$
where $b_\delta\in\FF$. Note that $u-ug\in S(\FF P)$. Therefore, if
$c_\delta=b_\delta/(a-b)$, we get $$
u_1:=(u-ug)/(a-b)=\phi_1-\phi_2+\sum_{\delta\in
P\backslash\{\phi_1,\phi_2\}}c_\delta\delta\in U\cap S(\FF P).
$$
Hence we have $u_2:=\sum_{k\in K}u_1k\in U\cap S(\FF P)$. Moreover,
by Lemmas~\ref{delta} and~\ref{phi},
$$u_2=2^{2n-3}(\phi_1-\phi_2)+\sum_{\delta\in\Delta}d_\delta\delta+\sum_{\langle
w\rangle\in P_1, \langle w\rangle\neq \phi_1}d_w[B_w],$$ where
$d_\delta,d_w\in\FF$. Therefore $u_3:=\sum_{h\in H}u_2h\in U\cap
S(\FF P)$ with
$$u_3=(\sum_{\delta\in\Delta}d_\delta)[\Delta]+2^{2n-2}\sum_{\langle w\rangle\in P_1,
\langle w\rangle\neq \phi_1}d_w[B_w]$$ again by Lemmas~\ref{delta}
and~\ref{phi}. It follows that
$$u_4:=2^{2n-2}u_2-u_3=2^{4n-5}(\phi_1-\phi_2)+\sum_{\delta\in\Delta}f_\delta\delta\in
U\cap S(\FF P),$$ where
$f_\delta=2^{2n-2}d_\delta-\sum_{\delta\in\Delta}d_\delta$. Hence
$$u_5:=\sum_{k\in
K}u_4k=2^{6n-8}(\phi_1-\phi_2)+f[\Delta_1]+f'[\Delta_2]\in U\cap
S(\FF P),$$ where $f=\sum_{\delta\in\Delta_1}f_\delta$ and
$f'=\sum_{\delta\in\Delta_2}f_\delta$. Since $u_5\in S(\FF P)$,
$f+f'=0$. Note that
$[\Delta_1]-[\Delta_2]=[\Delta(\phi_1)]-[\Delta(\phi_2)]$.
Therefore,
\begin{equation}\label{v_4}
u_5=2^{6n-8}(\phi_1-\phi_2)+f([\Delta(\phi_1)]-[\Delta(\phi_2)])\in
U.
\end{equation}

{\bf Case 1:} If $f=0$ then $u_5=2^{6n-8}(\phi_1-\phi_2)\in U$. From
the hypothesis that the characteristic of $\FF$ is odd, we have
$2^{6n-8}\neq0$. It follows that $\phi_1-\phi_2\in U$. Therefore
$\alpha-\beta\in U$ for every $\alpha,\beta\in P$. In other words,
$U\supseteq S(\FF P)$, which implies that $U$ contains both
$U'_{-2^{n-1}}$ and $U'_{2^{n-2}}$, as wanted.

{\bf Case 2:} If $f\neq 0$ then (\ref{v_4}) implies that
$(2^{6n-8}/f)(\phi_1-\phi_2)+[\Delta(\phi_1)]-[\Delta(\phi_2)]\in
U$. Therefore,
$(2^{6n-8}/f)(\alpha-\beta)+[\Delta(\alpha)]-[\Delta(\beta)]\in U$
for every $\alpha,\beta\in P$. In other words, $U\supseteq
U'_{2^{6n-8}/f}$. This and Lemma~\ref{liebeck} imply that $U$
contains either $U'_{-2^{n-1}}$ or $U'_{2^{n-2}}$. The proposition
is completely proved. \hfill$\Box$

\begin{proposition}\label{dimensionO+} If $\ell=\Char(\FF)\neq 2,3$, then
$$\dim U'_{2^{n-2}}=\frac{(2^n-1)(2^{n-1}-1)}{3} \text{ and } \dim U'_{-2^{n-1}}=\frac{2^{2n}-4}{3}.$$
\end{proposition}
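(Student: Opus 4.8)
The plan is to compute the two dimensions by identifying the graph submodules $U'_{2^{n-2}}$ and $U'_{-2^{n-1}}$ with submodules of $S(\FF P)$ that are eigenspaces of the operator $T$, and then to evaluate these dimensions using the known decomposition of the complex permutation character $\rho$ of $G$ acting on $P$. Recall from the remark after~(\ref{T}) that every $v\in U'_c$ with $c$ a root of~(\ref{quadratic}) satisfies $Tv = -dv$, where $d$ is the other root; thus $U'_{2^{n-2}}$ lies in the $(2^{n-1})$-eigenspace of $T$ and $U'_{-2^{n-1}}$ lies in the $(-2^{n-2})$-eigenspace of $T$. Over $\CC$, since the rank is $3$, the permutation module $\CC P$ decomposes as $T(\CC P)\oplus M_1\oplus M_2$ with $M_1,M_2$ irreducible and nontrivial, and $T$ acts as a scalar on each summand; matching eigenvalues, we get that $M_1 \cong U'_{2^{n-2}}\otimes\CC$ and $M_2\cong U'_{-2^{n-1}}\otimes\CC$ (or vice versa), so the dimensions over $\CC$ are the degrees of the two nontrivial constituents of $\rho$.

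So the first step is to write down $\rho$ explicitly. One computes $\rho = 1 + \chi_1 + \chi_2$ where the degrees are the classical rank~$3$ formulas in terms of the parameters $a,b,r,s$ already computed in the section: with $|P| = 2^{2n-1}-2^{n-1}$, $a = 2^{2n-2}-2^{n-1}$, $b = 2^{2n-2}-1$, and the two eigenvalues of $T$ on $S(\CC P)$ being $2^{n-1}$ and $-2^{n-2}$ (the negatives of the roots of~(\ref{quadratic})), the multiplicities of the nontrivial constituents are $f_1 = \frac{-(|P|-1)d - a}{c-d}$ type expressions. Carrying this out — I expect $\dim M_1 = \frac{(2^n-1)(2^{n-1}-1)}{3}$ and $\dim M_2 = \frac{2^{2n}-4}{3}$, which indeed sum to $|P|-1 = 2^{2n-1}-2^{n-1}-1$ — gives the complex dimensions. (A clean way to get these without reproving Higman's formulas is to use that $\chi_i(1)$ equals the multiplicity of the eigenvalue of $T$ times nothing — rather, $\dim M_i$ is just the dimension of the $T$-eigenspace on $S(\CC P)$, and the trace of $T$ on $\CC P$ is $0$ while the trace of $T$ on $\CC P$ is also $1\cdot 0 + (\dim M_1) \cdot 2^{n-1} + (\dim M_2)\cdot(-2^{n-2})$... so one linear equation plus $\dim M_1 + \dim M_2 = |P|-1$ pins both down, provided $3\neq 0$ in $\FF$, which is exactly the hypothesis $\ell\neq 3$; note $2^{n-1}-(-2^{n-2}) = 3\cdot 2^{n-2}$.)

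The second step is to pass from characteristic $0$ to characteristic $\ell \neq 2,3$. Here I would argue that the graph submodules $U'_{2^{n-2}}$ and $U'_{-2^{n-1}}$ over $\FF$ are obtained by reduction mod $\ell$ of the corresponding pure submodules over a suitable $\ell$-adic ring, so their $\FF$-dimensions are at most the $\CC$-dimensions; conversely, Lemma~\ref{liebeck} gives $U'_{2^{n-2}} + U'_{-2^{n-1}} = S(\FF P)$ whenever these two submodules are distinct (which holds precisely when $2^{n-2}\neq -2^{n-1}$ in $\FF$, i.e.\ when $3\cdot 2^{n-2}\neq 0$, i.e.\ $\ell\neq 3$) together with $\langle U'_{2^{n-2}}, U_{-2^{n-1}}\rangle = 0$, and the self-duality of $\FF P$, to force the dimension count. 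Concretely, $U'_c \subseteq (U'_d)^{\perp}\cap S(\FF P)$ for the two roots $c,d$, and comparing with $\dim U'_c + \dim U'_d \geq \dim S(\FF P) = |P|-1$ forces equality $U'_c = (U'_d)^{\perp}\cap S(\FF P)$ and hence the dimensions are complementary; combined with the mod-$\ell$ reduction bound this pins each one to its characteristic-$0$ value.

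The main obstacle I anticipate is the lower-bound direction: showing that $\dim_{\FF} U'_{2^{n-2}}$ does not drop below $\frac{(2^n-1)(2^{n-1}-1)}{3}$ when we reduce mod $\ell$. The eigenspace/orthogonality argument above handles this cleanly \emph{as long as} one knows $U'_{2^{n-2}}\cap U'_{-2^{n-1}} = 0$; if the two graph submodules met nontrivially the sum $S(\FF P)$ could be achieved with smaller pieces. The intersection is contained in the $2^{n-1}$-eigenspace and in the $(-2^{n-2})$-eigenspace of $T$ simultaneously, so it is killed by $3\cdot 2^{n-2}$, hence is zero when $\ell\neq 2,3$ — so in fact the hypotheses of the proposition are exactly what is needed. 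I would present the proof in the order: (1) recall $T$ acts as $-d$ on $U'_c$; (2) compute the complex character degrees $\frac{(2^n-1)(2^{n-1}-1)}{3}$ and $\frac{2^{2n}-4}{3}$; (3) observe $U'_{2^{n-2}}\cap U'_{-2^{n-1}}=0$ and $U'_{2^{n-2}}+U'_{-2^{n-1}}=S(\FF P)$ over $\FF$ using Lemma~\ref{liebeck}; (4) bound $\dim_{\FF}U'_c$ above by the $\CC$-dimension via an $\ell$-modular reduction of the lattice spanned by the $v_{c,\alpha}$'s, and conclude by the dimension count.
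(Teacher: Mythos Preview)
Your approach is essentially the paper's: both establish $U'_{2^{n-2}}\oplus U'_{-2^{n-1}}=S(\FF P)$ via the distinct $T$-eigenvalues $2^{n-1}$ and $-2^{n-2}$, and then solve the linear system coming from $\dim U'_{2^{n-2}}+\dim U'_{-2^{n-1}}=|P|-1$ together with the trace of $T$. Two small slips to fix. First, Lemma~\ref{liebeck} does \emph{not} say $U'_c+U'_d=S(\FF P)$; the paper obtains this directly from the identity $(v_{c,\alpha}-v_{c,\beta})-(v_{d,\alpha}-v_{d,\beta})=(c-d)(\alpha-\beta)$ with $c-d=3\cdot 2^{n-2}$ invertible in $\FF$. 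Second, the eigenvalue of $T$ on $T(\CC P)$ is $a=2^{2n-2}-2^{n-1}$, not $0$, so the trace equation reads $a+2^{n-1}\dim U'_{2^{n-2}}-2^{n-2}\dim U'_{-2^{n-1}}=0$. Your step~(4) with the $\ell$-adic lattice and orthogonality is correct but heavier than necessary: once you know $T|_{S(\FF P)}$ is diagonalizable over $\FF$ with the two distinct eigenvalues, the characteristic polynomial of the integer matrix $T|_{S(\ZZ P)}$---which over $\ZZ$ factors as $(x-2^{n-1})^{d_1}(x+2^{n-2})^{d_2}$ by the same argument in characteristic~$0$---reduces mod $\ell$ without its roots colliding, so the $\FF$-eigenspace dimensions equal the integer multiplicities $d_1,d_2$ automatically. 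The paper simply writes the two integer equations and solves.
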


\begin{proof} We have $v_{2^{n-2},\alpha}-v_{-2^{n-1},\alpha}=3\cdot
2^{n-2}\alpha$ for any $\alpha\in P$. It follows that, for any
$\alpha,\beta\in P$,
$$(v_{2^{n-2},\alpha}-v_{2^{n-2},\beta})-(v_{-2^{n-1},\alpha}-v_{-2^{n-1},\beta})=3\cdot2^{n-2}(\alpha-\beta).$$
Since $\ell\neq2,3$, $U'_{2^{n-2}}+U'_{-2^{n-1}}=S(\FF P)$. Note
that $T(v)=-2^{n-1}v$ for any $v\in U'_{2^{n-2}}$ and
$T(v)=2^{n-2}v$ for any $v\in U'_{-2^{n-1}}$. Therefore
$U'_{2^{n-2}}\cap U'_{-2^{n-1}}=\{0\}$ since $2^{n-2}\neq-2^{n-1}$.
So we have $U'_{2^{n-2}}\oplus U'_{-2^{n-1}}=S(\FF P)$. In
particular,
\begin{equation}\label{dimension1}\dim U'_{2^{n-2}}+\dim U'_{-2^{n-1}}=|P|-1=2^{2n-1}-2^{n-1}-1.
\end{equation}

The transposition $T$ (see (\ref{T})) has trace $0$ and the
eigenvalue $a=2^{2n-2}-2^{n-1}$ of multiplicity one. Since $T$ also
has two different eigenvalues $-2^{n-2}$ and $2^{n-1}$ with
corresponding eigenvector spaces $U'_{-2^{n-1}}$ and $U'_{2^{n-2}}$,
we have
\begin{equation}\label{dimension2}
-2^{n-2}\dim U'_{-2^{n-1}}+2^{n-1}\dim
U'_{2^{n-2}}=-(2^{2n-2}-2^{n-1}).
\end{equation}

Now (\ref{dimension1}) and (\ref{dimension2}) imply the proposition.
\end{proof}

As we mentioned in the introduction, the structure of $\FF P$ is
more complicated when $2^{n-2}=-2^{n-1}$ or equivalently when
$\ell=3$. We now study the decomposition of $\overline{\rho}$ (when
$\ell=3$) into irreducible characters, which are also called
\emph{constituents}.

\begin{lemma}\label{lemmaO+1} Suppose $\ell=3$. Then
\begin{enumerate}
\item[(i)] when $n$ is even, $\overline{\rho}$ has exactly $2$ trivial
constituents, $2$ constituents of degree $(2^n-1)(2^{n-1}-1)/3$, and
$1$ constituent of degree $(2^n-1)(2^{n-1}+2)/3-2$;
\item[(ii)] when $n$ is odd, $\overline{\rho}$ has exactly $1$ trivial
constituent, $2$ constituents of degree $(2^n-1)(2^{n-1}-1)/3$, and
$1$ constituent of degree $(2^n-1)(2^{n-1}+2)/3-1$.
\end{enumerate}
\end{lemma}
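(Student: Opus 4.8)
Since the action is rank~$3$, over $\CC$ we have $\rho = 1_{G} + \chi_{1} + \chi_{2}$ with $\chi_{1}, \chi_{2}$ distinct and irreducible, and $S(\CC P)$ is the sum of the two modules affording them. The map $T$ of~(\ref{T}) is a $G$-homomorphism with two distinct eigenvalues on $S(\CC P)$ (the numbers $-d$ as $d$ runs over the roots of~(\ref{quadratic})), so its two eigenspaces, being submodules, are $U'_{2^{n-2}}$ and $U'_{-2^{n-1}}$. Hence the computation in the proof of Proposition~\ref{dimensionO+}, valid also in characteristic zero, gives $\chi_{1}(1) = \frac{(2^{n}-1)(2^{n-1}-1)}{3}$ and $\chi_{2}(1) = \frac{2^{2n}-4}{3}$, where we take $\chi_{1}$ afforded by $U'_{2^{n-2}}$. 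Two facts will generate the even/odd dichotomy: when $\Char(\FF)=3$ the two eigenvalues of $T$ coincide, and $3$ divides $|P| = 2^{n-1}(2^{n}-1)$ exactly when $n$ is even.

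Modulo~$3$ we get $\overline{\rho} = \overline{1_{G}} + \overline{\chi_{1}} + \overline{\chi_{2}}$, and the lemma will follow from three claims: \emph{(i)}~$\overline{\chi_{1}}$ is irreducible; \emph{(ii)}~$\overline{\chi_{2}}$ contains the trivial Brauer character exactly once if $n$ is even and not at all if $n$ is odd; \emph{(iii)}~$\overline{\chi_{2}}$ has exactly one constituent of degree $\chi_{1}(1)$, and after deleting the constituents from~(i) and~(ii) the remainder of $\overline{\chi_{2}}$ is a single irreducible. Granting these, $\overline{\rho}$ has two trivial constituents when $n$ is even and one when $n$ is odd, two constituents of degree $\frac{(2^{n}-1)(2^{n-1}-1)}{3}$, and one further constituent whose degree is forced to be $\chi_{2}(1)-\chi_{1}(1)=\frac{(2^{n}-1)(2^{n-1}+2)}{3}-1$ for $n$ odd, resp.\ $\chi_{2}(1)-1-\chi_{1}(1)=\frac{(2^{n}-1)(2^{n-1}+2)}{3}-2$ for $n$ even, which is the stated decomposition. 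To prove the claims I would argue as follows. When $n$ is odd, the characteristic polynomial of $T$ on $\FF P$ reduces to $X(X-1)^{|P|-1}$ (the multiplicity-one eigenvalue $a=2^{2n-2}-2^{n-1}$ of~(\ref{T}) on $T(\FF P)$ reduces to $0$, and the remaining eigenvalues all reduce to $1$), so $\FF P = T(\FF P) \oplus M$ with $\dim M = |P|-1$ and $T$ acting on $M$ unipotently with eigenvalue~$1$; as $T$ acts by $0$ on the trivial module, $M$ has no trivial composition factor, which gives~(ii) for $n$ odd. For~(i), $U'_{2^{n-2}} = U'_{-2^{n-1}} =: W$ because $3\cdot 2^{n-2}=0$ in $\FF$; by Proposition~\ref{mainO+} every non-zero submodule of $\FF P$ other than $T(\FF P)$ contains $W$, whence $W$ is simple, and~(i) reduces to verifying $\dim W = \chi_{1}(1)$, i.e.\ that $\ZZ P / U'_{2^{n-2},\ZZ}$ is free of $3$-torsion so that $W$ realizes the reduction of $\chi_{1}$. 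For the outstanding cases of~(ii)--(iii) (namely $n$ even, and the irreducibility of the leftover of $\overline{\chi_{2}}$) I would combine Proposition~\ref{mainO+}, the self-duality of $\FF P$, and Lemma~\ref{main3} to constrain the socle and radical series of $\FF P$ from both ends, and use Lemma~\ref{main2}: a degree comparison identifies the constituent shared by $\rho$ and the singular-point permutation character $\rho^{0}$ as $\chi_{2}$, and the mod~$3$ behaviour of the matching constituent of $\FF P^{0}$ is available from the analysis of singular-point modules in~\cite{L1,ST}.

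The eigenvalue and degree bookkeeping and the trivial-constituent count for $n$ odd are routine; the real content lies in the \emph{irreducibility}-type assertions: that $W$ already has the full dimension $\chi_{1}(1)$, that the leftover summand of $\overline{\chi_{2}}$ does not split further, and that no extra trivial constituent appears when $n$ is even. I expect to settle all three together by pinning down the socle and radical series of the self-dual module $\FF P$ simultaneously from the top and the bottom, using Proposition~\ref{mainO+} and Lemma~\ref{main3}, so that there is no room left for these modules to decompose; failing that, one imports the $\ell=3$ decomposition numbers of the relevant characters of $O^{+}_{2n}(2)$, or carries the needed data over from $\FF P^{0}$ through Lemmas~\ref{main1} and~\ref{main2}.
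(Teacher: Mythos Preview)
Your eigenvalue argument for~(ii) when $n$ is odd has a genuine gap. You write ``as $T$ acts by $0$ on the trivial module, $M$ has no trivial composition factor,'' but the scalar by which the endomorphism $T$ acts on a composition factor of $\FF P$ is \emph{not} determined by the isomorphism type of that factor. The map $T$ is a specific element of $\End_{\FF G}(\FF P)$, not a natural transformation on $\FF G$-modules; different trivial composition factors can carry different $T$-eigenvalues. (Already for $G$ trivial and $T$ a diagonal matrix with distinct entries one sees this.) So knowing that the characteristic polynomial is $X(X-1)^{|P|-1}$ only tells you that the generalized $0$-eigenspace is one-dimensional, not that $M=S(\FF P)$ has no trivial composition factor. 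Similarly, your route to~(i) via ``$\ZZ P/U'_{2^{n-2},\ZZ}$ is free of $3$-torsion'' is an unproven assertion, and in fact the paper establishes $\dim U'_1$ (or $U'_2$) only \emph{after} this lemma, in the proof of Theorem~\ref{theorem}, so this direction is backwards.

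The paper's proof is much shorter than the program you outline, and its two ingredients are exactly the external inputs you list only as a last resort. First, Hoffman's lower bound~\cite{Ho} on nonlinear cross-characteristic degrees of $O_{2n}^{+}(2)$ is at least $(2^n-1)(2^{n-1}-1)/3=\chi_1(1)$, which forces $\overline{\chi_1}$ to be irreducible outright; this is the key idea you are missing for~(i). Second, for~(ii) and~(iii) the paper does precisely what you relegate to ``failing that'': by Lemma~\ref{main2} and a degree comparison with the singular-point character $\rho^{0}$ one identifies $\chi_2=\psi$ as the shared constituent, and then the decomposition of $\overline{\psi}$ is read off from \S6 of~\cite{ST}. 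No socle-series pinning is needed at this stage; that analysis comes later, in the proof of Theorem~\ref{theorem}, and \emph{uses} the present lemma rather than proving it.
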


\begin{proof} The case $n=3$ can be checked directly using~\cite{Atl2}. Therefore we assume $n\geq4$.
From the proof of Proposition~\ref{dimensionO+}, in characteristic
$0$ we have $\FF P=T(\FF P)\oplus U'_{2^{n-2}}\oplus U'_{-2^{n-1}}$.
Let $\varphi$ and $\psi$ be the irreducible complex characters of
$G$ afforded by $U'_{2^{n-2}}$ and $U'_{-2^{n-1}}$, respectively.
Then we have $\rho=1+\varphi+\psi$, where
$\varphi(1)=(2^n-1)(2^{n-1}-1)/3$ and $\psi(1)=(2^{2n}-4)/3$. It has
been shown in~\cite{Ho} that the smallest degree of a nonlinear
irreducible characters of $G$ in cross-characteristic is at least
$(2^n-1)(2^{n-1}-1)/3$. Therefore $\overline{\varphi}$ must be
irreducible.

It remains to consider $\overline{\psi}$. From~\cite{L2}, the
complex permutation character $\rho^0$ of $G$ acting on singular
points has $3$ constituents of degrees $1$, $(2^n-1)(2^{n-1}+2)/3$,
and $(2^{2n}-4)/3$. It follows by Lemma~\ref{main2} that $\psi$ must
be a common constituent of $\rho$ and $\rho^0$. The lemma follows by
using the decomposition of $\overline{\psi}$ from~$\S6$
of~\cite{ST}.
\end{proof}

\textbf{Proof of Theorem \ref{theorem} when $G=O^+_{2n}(2)$}. We
consider four cases as described in Table~\ref{tableO+}:

\medskip

(i) $\ell\neq2,3;\ell\nmid (2^n-1)$: Then we have
$[P]=2^{2n-1}-2^{n-1}\neq0$ and therefore $S(\FF P)\cap T(\FF
P)=\{0\}$. So $\FF P=S(\FF P)\oplus T(\FF P)$. From the proof of
Proposition~\ref{dimensionO+}, we see that $U'_{2^{n-2}}\oplus
U'_{-2^{n-1}}=S(\FF P)$. So
$$\FF P=T(\FF P)\oplus U'_{2^{n-2}}\oplus U'_{-2^{n-1}}.$$
By Proposition~\ref{mainO+}, these direct summands are simple and
their dimensions are given in Proposition \ref{dimensionO+}. In
Table~\ref{tableO+}, $X:=U'_{2^{n-2}}$ and $Y=U'_{-2^{n-1}}$.

\medskip

(ii) $\ell\neq 2,3;\ell\mid (2^n-1)$: Then $T(\FF P)\subset S(\FF
P)$. Since $\ell\neq 3$ and $\ell\mid (2^n-1)$, we have $\ell\nmid
(2^{n-2}-1)$ and therefore $2^{2n-2}-2^{n-1}\neq 2^{n-1}$. Recall
that $T(v)=av=(2^{2n-2}-2^{n-1})v$ for any $v\in T(\FF P)$ and
$T(v)=2^{n-1}v$ for any $v\in U'_{2^{n-2}}$. Therefore
\begin{equation}\label{proof of theoremO+1}
T(\FF P)\cap U'_{2^{n-2}}=\{0\}.
\end{equation}
Hence, by Proposition~\ref{mainO+}, $U'_{2^{n-2}}$ is simple.

If $c$ is a root of the quadratic equation $x^2+(r-s)x+(s-a)=0$,
then
$\sum_{\delta\in\Delta(\alpha)}v_{c,\delta}=(a-s)\alpha+(r-s+c)[\Delta(\alpha)]+s[P]\in
U_c$. Therefore,
$$s[P]=(a-s)\alpha+(r-s+c)[\Delta(\alpha)]+s[P]-(r-s+c)(c\alpha+[\Delta(\alpha)])\in U_c.$$
Recall that $s=2^{2n-3}-2^{n-1}=2^{n-1}(2^{n-2}-1)\neq 0$ as
$\ell\nmid (2^{n-2}-1)$. It follows that $[P]\in U_{2^{n-2}}$ and
$[P]\in U_{-2^{n-1}}$. Equivalently,
\begin{equation}\label{proof of theoremO+2}
T(\FF P)\subseteq U_{2^{n-2}} \text{ and } T(\FF P)\subseteq
U_{-2^{n-1}}.
\end{equation}
Now (\ref{proof of theoremO+1}) and (\ref{proof of theoremO+2})
imply that $U_{2^{n-2}}=U'_{2^{n-2}}\oplus T(\FF P)$. Notice that
$v_{2^{n-2},\alpha}-v_{-2^{n-1},\alpha}=3\cdot 2^{n-2}\alpha$ for
any $\alpha\in P$. So $\FF P=U_{2^{n-2}}+U_{-2^{n-1}}$. It follows
that $\FF P=U'_{2^{n-2}}+U_{-2^{n-1}}$ since $T(\FF P)\subseteq
U_{-2^{n-1}}$. We also have $\dim U'_{2^{n-2}}+\dim U_{-2^{n-1}}\leq
\dim U'_{2^{n-2}}+\dim U'_{-2^{n-1}}+1=\dim S(\FF P)+1=|P|$. So
$$\FF P=U'_{2^{n-2}}\oplus U_{-2^{n-1}}.$$

If $T(\FF P)\nsubseteq U'_{-2^{n-1}}$ then $U_{-2^{n-1}}=T(\FF
P)\oplus U'_{-2^{n-1}}$. It follows that $(T(\FF P)\oplus
U'_{-2^{n-1}})\cap U'_{2^{n-2}}=0$, which leads to a contradiction
since $T(\FF P)\subset S(\FF P)=U'_{-2^{n-1}}\oplus U'_{2^{n-2}}$.
So we have $T(\FF P)\subset U'_{-2^{n-1}}$. We deduce that, by
Proposition~\ref{mainO+}, $U_{-2^{n-1}}$ is uniserial with
composition series
$$0\subset T(\FF P) \subset U'_{-2^{n-1}}\subset
U_{-2^{n-1}}.$$ It is easy to see that
$v_{-2^{n-1},\alpha}-v_{-2^{n-1},\alpha}g=v_{-2^{n-1},\alpha}-v_{-2^{n-1},\alpha
g}\in U'_{-2^{n-1}}$. Therefore $U_{-2^{n-1}}/U'_{-2^{n-1}}$ is
isomorphic to the one-dimensional trivial $\FF G$-module. If we put
$Y:=U'_{-2^{n-1}}/T(\FF P)$, then the socle series of $U_{-2^{n-1}}$
is $\FF-Y-\FF$ (here and after, we use row notation for socle
series). We note that, in Table~\ref{tableO+}, $X:=U'_{2^{n-2}}$.

\medskip

(iii) $\ell=3, n$ even: Then $2^{n-2}=-2^{n-1}=1$. Let $S\subset P$
be the set of all nonsingular points of the form $\langle
e_1+f_1+v\rangle$ with $v\in \langle e_2,e_3,...,e_n\rangle$. It is
easy to check that
$$\sum_{\alpha\in S}v_{1,\alpha}=\sum_{\alpha\in S}(\alpha+[\Delta(\alpha)])=
\sum_{\alpha\in S}\alpha+2^{n-2}\sum_{\alpha\in P\setminus
S}\alpha=[P].$$ Therefore, $[P]\in U_1$. Moreover, since
$|S|=2^{n-1}\neq0$, $[P]\notin U'_1$. Therefore we have $U_1=T(\FF
P)\oplus U'_1$ and $U'_1$ is simple by Proposition~\ref{mainO+}.
Moreover, $U_1$ is the socle of $\FF P$.

Recall that the submodule $U_1$ consists of $\FF$-linear
combinations of $v_{1,\alpha}, \alpha\in P$. Define a bilinear form
$[\cdot,\cdot]$ on $U_1$ by $[v_{1,\alpha},v_{1,\beta}]=\langle
v_{1,\alpha},\beta\rangle$. It is clear that this form is symmetric,
non-singular, and $G$-invariant. Hence, $U_1$ is self-dual. By
Lemma~\ref{liebeck}, $\langle
v_{1,\alpha},v_{1,\beta}\rangle=s=2^{2n-3}-2^{n-1}=0$ for any
$\alpha,\beta\in P$. Hence $\langle U_1,U_1\rangle=0$ or
equivalently, $U_1\subseteq U_1^\perp$. Therefore, $\FF
P/U_1^\perp\cong \Hom_\FF(U_1,\FF)\cong U_1$. We have shown that
$0\subset U_1\subseteq U_1^\perp\subset\FF P$ is a series of $\FF P$
with $\FF P/U_1^\perp\cong U_1\cong T(\FF P)\oplus U'_1$. This and
Lemma \ref{lemmaO+1} imply that $\dim U'_1=(2^n-1)(2^{n-1}-1)/3$ and
$U_1^\perp/U_1$ is simple of dimension $(2^n-1)(2^{n-1}+2)/3-2$.

Set $X:=U'_1$ and $Z:=U_1^\perp/U_1$. Then the composition factors
of $\FF P$ are: $\FF$ (twice), $X$ (twice), and $Z$. By
Proposition~\ref{mainO+}, $S(\FF P)/T(\FF P)$ is uniserial with
socle series $X-Z-X$. Note that $U_1^{'\perp}/U'_1$ has composition
factors: $\FF$ (twice) and $Z$. We will show that
$U_1^{'\perp}/U'_1$ is also uniserial with socle series $\FF-Z-\FF$.
Assume the contrary. Then $Z$ is a direct summand of self-dual
$U_1^{'\perp}/U'_1$. By~(\ref{Q}), $\FF P/\Ker(Q)\cong \Im(Q)$,
which is neither $0$ nor $T(\FF P^0)$. Inspecting the structure of
$\FF P^0$ given in Figure~4 of~\cite{ST}, we see that $\Im(Q)$ has a
uniserial submodule $\FF-Z$. It follows that $\FF P/\Ker(Q)$ also
has a uniserial submodule $\FF-Z$, which leads to a contradiction
since $Z$ is a direct summand of $U_1^{'\perp}/U'_1$. We conclude
that the socle series of $\FF P$ is $(\FF\oplus X)-Z-(\FF\oplus X)$.

\medskip

(iv) $\ell=3, n$ odd: Then $2^{n-2}=-2^{n-1}=2$. Since
$|P|=2^{2n-1}-2^{n-1}\neq0$, $[P]\notin S(\FF P)$ and therefore $\FF
P=T(\FF P)\oplus S(\FF P)$. As in~(iii), we have $U_2=T(\FF P)\oplus
U'_2$ and $U'_2$ is simple. Moreover, $U'_2$ is the socle of $S(\FF
P)$. We also have that $U_2$ as well as $U'_2$ are self-dual. By
Lemma~\ref{liebeck}, $\langle U'_2,U_2\rangle=0$, whence
$U'_2\subseteq U_2^\perp$. Since $[P]\in U_2$, $U_2^\perp \subset
S(\FF P)$ and we obtain $S(\FF P)/U_2^\perp$ $\cong$ $\FF
P/U_2^{'\perp}$ $\cong$ $ \Hom_\FF(U'_2,\FF)\cong U'_2$. Combining
this with Lemma~\ref{lemmaO+1}, we have $\dim
U'_2=(2^n-1)(2^{n-1}-1)/3$ and $U_2^\perp/U'_2$ is simple of
dimension $(2^n-1)(2^{n-1}+2)/3-1$. By Lemma~\ref{main3} and the
self-duality of $S(\FF P)$, $S(\FF P)/U_2^\perp$ is the top layer of
$S(\FF P)$ and therefore $U_2^\perp/U'_2$ is the second layer.
Setting $X=U'_2$ and $Z=U_2^\perp/U'_2$, then the socle series of
$S(\FF P)$ is $X-Z-X$.\hfill$\Box$


\section{The orthogonal groups $O^-_{2n}(2)$}\label{sectionO-}
Let $Q(\cdot)$ be a quadratic form of type $-$ on a vector space $V$
of dimension $2n\geq6$ and $(.,.)$ be a symmetric bilinear form
associated with $Q$ so that $Q(au+bv)=a^2Q(u)+b^2Q(v)+ab(u,v)$ for
any $a,b\in \FF_2,u,v\in V$. Then $G=O^-_{2n}(2)$ is the orthogonal
group of linear transformations of $V$ preserving $Q$. We choose a
basic of $V$ consisting of vectors $\{e_1,...,e_n,f_1,...,f_n\}$ so
that $(e_i,f_j)=\delta_{ij}$, $(e_i,e_j)=(f_i,f_j)=0$ for every
$i,j=1,...,n$,
$Q(e_1)=\cdot\cdot\cdot=Q(e_{n-1})=Q(f_1)=\cdot\cdot\cdot=Q(f_{n-1})=0$,
and $Q(e_n)=Q(f_n)=1$. If $P$ is the set of all nonsingular points
in $V$ then $P=\{\langle\sum_1^n(a_ie_i+b_if_i)\rangle\mid
a_n+b_n+\sum_1^na_ib_i=1\}$ and $|P|=2^{2n-1}+2^{n-1}$.

A similar calculation as in $\S$\ref{sectionO+} shows that
$a=2^{2n-2}+2^{n-1}, b=2^{2n-2}-1, r=2^{2n-3}+2^{n-2},
s=2^{2n-3}+2^{n-1}$. Equation~(\ref{quadratic}) now becomes
$x^2-2^{n-2}x-2^{2n-3}=0$ and it has two roots $-2^{n-2}$ and
$2^{n-1}$.

\begin{proposition}\label{mainO-} Suppose that the characteristic of $\mathbb{F}$ is
odd. Then every nonzero $\FF G$-submodule of $\FF P$ either is
$T(\FF P)$ or contains a graph submodule, which is $U'_{-2^{n-2}}$
or $U'_{2^{n-1}}$.
\end{proposition}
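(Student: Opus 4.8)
The plan is to mirror almost verbatim the proof of Proposition~\ref{mainO+} for the $O^+$ case, since the paper explicitly announces that the $O^-$ case is parametrically identical in structure. First I would fix $\alpha=\langle e_n+f_n\rangle$ (or some convenient nonsingular point) and, exactly as in $\S\ref{sectionO+}$, identify an explicit pair $\phi_1,\phi_2\in P$ with $(\phi_1,\phi_2)=0$ playing the role that $\langle e_2+f_2\rangle$ and $\langle e_1+e_2+f_2\rangle$ played before, such that $[\Delta(\phi_1)]-[\Delta(\phi_2)]=[\Delta_1]-[\Delta_2]$ for a suitable partition $\Delta=\Delta_1\cup\Delta_2$ of an $H$-orbit, where $H<G$ is an abelian (in fact elementary abelian $2$-) subgroup that is regular on $\Delta$ and stabilizes a three-term subspace flag $0<V_1<V_2<V$ with $[V_i,H]\le V_{i-1}$. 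The only real bookkeeping is to write down the analogues of Lemmas~\ref{delta} and~\ref{phi}: the relevant counts will be $|H|=2^{2n-2}$, $|\Delta|=2^{2n-2}$, $|\Delta_1|=|\Delta_2|=2^{2n-3}$, a subgroup $K<H$ of index $2$ with orbits $\Delta_1,\Delta_2$, and $\Phi=P\setminus\Delta$ partitioned into $K$-blocks $B_w$ of size $\le 2$, with $H$ transitive on each $B_w$.

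Granting those two combinatorial lemmas, the argument is purely formal and identical to the $O^+$ proof. Take a submodule $U\ne T(\FF P)$; it contains $u=a\phi_1+b\phi_2+\cdots$ with $a\ne b$; after an easy reduction (replacing $\phi_1,\phi_2$ by an orthogonal pair using a third point when $(\phi_1,\phi_2)=1$) and applying an element of $G$ to move $\phi_1,\phi_2$ to the standard orthogonal pair, I average $u-ug$ (for $g\in G$ swapping $\phi_1\leftrightarrow\phi_2$ and fixing the flag) over $K$, then over $H$, then over $K$ again, to land on an element of the shape
\begin{equation*}
u_5 = 2^{6n-8}(\phi_1-\phi_2)+f\bigl([\Delta(\phi_1)]-[\Delta(\phi_2)]\bigr)\in U\cap S(\FF P).
\end{equation*}
Since $\Char(\FF)$ is odd, $2^{6n-8}\ne 0$. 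If $f=0$ then $\phi_1-\phi_2\in U$, hence $S(\FF P)\subseteq U$ and $U$ contains both graph submodules. If $f\ne 0$ then $U\supseteq U'_{2^{6n-8}/f}$, and Lemma~\ref{liebeck} forces $U$ to contain one of the two graph submodules $U'_{-2^{n-2}}$ or $U'_{2^{n-1}}$ (the roots of $x^2-2^{n-2}x-2^{2n-3}=0$).

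The main obstacle — really the only one — is constructing the correct subgroup $H$ and partition $\Delta=\Delta_1\cup\Delta_2$ in the $O^-$ geometry, where the anisotropic part is two-dimensional rather than split, so the "$e_1,f_1$" coordinates of the $O^+$ argument are not immediately available; I would instead work with a hyperbolic pair $e_1,f_1$ inside $V$ (these exist since $2n\ge 6$) and let $H$ be the unipotent radical of the stabilizer of the flag $\langle e_1\rangle<\langle e_1\rangle^\perp<V$, checking that its order and orbit structure on $P$ come out to the same powers of $2$ as before. Because the paper states that proofs in later sections parallel the $O^+$ case and "we only give detailed arguments for the case $G=O^+_{2n}(2)$," I expect the write-up to consist of the two lemma analogues stated without proof (or with a one-line "same as Lemmas~\ref{delta},~\ref{phi}") followed by the verbatim averaging computation, concluding as above. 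So concretely: (1) set up $e_1,f_1,H,K,\Delta_1,\Delta_2,\Phi,B_w$; (2) state the two counting lemmas; (3) run the four-fold averaging to get $u_5$; (4) split into the cases $f=0$ and $f\ne 0$ and invoke Lemma~\ref{liebeck}. \hfill$\Box$
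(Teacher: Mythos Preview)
Your proposal is correct and is essentially the paper's own proof. The paper defines $\Delta_1,\Delta_2,\Delta,\Phi,P_1,V_1,B_w$ exactly as in \S\ref{sectionO+}, takes $H$ to be the unipotent radical of the stabilizer of the flag $0<\langle e_1\rangle<\langle e_1\rangle^\perp<V$ (written explicitly via the same basis-shear formula, with the single constraint changed to $a_1=a_n+b_n+\sum_{i=2}^n a_ib_i$ to match the $O^-$ form), notes that Lemmas~\ref{delta} and~\ref{phi} hold verbatim, and then says ``argue as in the proof of Proposition~\ref{mainO+}.'' Your anticipated obstacle is a non-issue: in the chosen $O^-$ basis the pairs $e_1,f_1,\dots,e_{n-1},f_{n-1}$ are still hyperbolic, so $\phi_1=\langle e_2+f_2\rangle$, $\phi_2=\langle e_1+e_2+f_2\rangle$ and the subgroup $H$ carry over without modification beyond that one constraint on $a_1$.
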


\begin{proof} Define $\Delta_1,\Delta_2,\Delta,\Phi, P_1, V_1$, and $B_w$ similarly to those of $\S 3$.
Consider a subgroup $H<G$ consisting of orthogonal transformations
sending elements of the basis $\{e_1$, $f_1$, $e_2$,
$f_2,...,e_n,f_n\}$ to those of the basis
$\{e_1,f_1+\sum_{i=1}^na_ie_i+\sum_{i=2}^nb_if_i,e_2-b_2e_1,f_2-a_2e_1,...,e_n-b_ne_1,f_n-a_ne_1\}$
respectively, where $a_i,b_i\in\FF_2$ and
$a_1=a_n+b_n+\sum_{i=2}^na_ib_i$. Let $K$ be the subgroup of $H$
consisting of transformations fixing $e_2+f_2$. Then
Lemmas~\ref{delta} and~\ref{phi} remains true as stated. Now we just
argue as in the proof of Proposition~\ref{mainO+}.
\end{proof}

\begin{proposition}\label{dimensionO-} If $\ell=\Char(\FF)\neq 2,3$, then
$$\dim U'_{-2^{n-2}}=\frac{(2^n+1)(2^{n-1}+1)}{3} \text{ and } \dim U'_{2^{n-1}}=\frac{2^{2n}-4}{3}.$$
\end{proposition}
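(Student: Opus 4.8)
The plan is to follow the proof of Proposition~\ref{dimensionO+} essentially verbatim, since the only inputs used there were the numerical parameters $a,r,s$, the two roots of the quadratic~(\ref{quadratic}), the minimality statement of Proposition~\ref{mainO+}, and the fact that $\ell\neq 2,3$. All of these have direct analogues for $G=O^-_{2n}(2)$: the roots are now $-2^{n-2}$ and $2^{n-1}$, the relevant eigenvalue of $T$ on the trivial summand is $a=2^{2n-2}+2^{n-1}$, and Proposition~\ref{mainO-} plays the role of Proposition~\ref{mainO+}.

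Concretely, I would first observe that $v_{-2^{n-2},\alpha}-v_{2^{n-1},\alpha}=(-2^{n-2}-2^{n-1})\alpha=-3\cdot 2^{n-2}\alpha$ for every $\alpha\in P$, so taking differences over $\alpha,\beta\in P$ shows $3\cdot 2^{n-2}(\alpha-\beta)\in U'_{-2^{n-2}}+U'_{2^{n-1}}$; since $\ell\neq 2,3$ this forces $U'_{-2^{n-2}}+U'_{2^{n-1}}=S(\FF P)$. Next, using the identities $T(v)=-2^{n-1}v$ for $v\in U'_{-2^{n-2}}$ and $T(v)=2^{n-2}v$ for $v\in U'_{2^{n-1}}$ (from the general discussion after~(\ref{T}), with $c=-2^{n-2},d=2^{n-1}$ and $c=2^{n-1},d=-2^{n-2}$ respectively), and the fact that $-2^{n-1}\neq 2^{n-2}$ when $\ell\neq 3$, the two graph submodules intersect trivially, so $U'_{-2^{n-2}}\oplus U'_{2^{n-1}}=S(\FF P)$ and hence $\dim U'_{-2^{n-2}}+\dim U'_{2^{n-1}}=|P|-1=2^{2n-1}+2^{n-1}-1$.

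To get the second equation I would compute the trace of $T$ in two ways. On the one hand, $T(\alpha)=[\Delta(\alpha)]$ has no $\alpha$-coefficient (since $\alpha\notin\Delta(\alpha)$), so $\tr T=0$. On the other hand, $\FF P=T(\FF P)\oplus U'_{-2^{n-2}}\oplus U'_{2^{n-1}}$ in characteristic zero (equivalently for $\ell\neq 2,3$, after noting $[P]\notin S(\FF P)$ or simply working over $\CC$), with $T$ acting as the scalars $a=2^{2n-2}+2^{n-1}$, $-2^{n-1}$, and $2^{n-2}$ on the three summands. Hence $(2^{2n-2}+2^{n-1})-2^{n-1}\dim U'_{-2^{n-2}}+2^{n-2}\dim U'_{2^{n-1}}=0$. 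Solving this linear system together with the dimension sum yields $\dim U'_{-2^{n-2}}=(2^n+1)(2^{n-1}+1)/3$ and $\dim U'_{2^{n-1}}=(2^{2n}-4)/3$, as claimed.

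There is essentially no obstacle here; the statement is a routine transcription of Proposition~\ref{dimensionO+} with the sign-flipped parameters, and the only point requiring a moment's care is bookkeeping which root is ``$c$'' and which is ``$d$'' so that the eigenvalues of $T$ on $U'_{-2^{n-2}}$ and $U'_{2^{n-1}}$ are assigned correctly (namely $-d=-2^{n-1}$ and $-c=2^{n-2}$, respectively), and checking that the resulting $2\times 2$ system is nonsingular, which it is since the eigenvalues $-2^{n-1}$ and $2^{n-2}$ are distinct. Accordingly the proof can simply read: ``The same argument as in Proposition~\ref{dimensionO+}, now using the roots $-2^{n-2},2^{n-1}$ of~(\ref{quadratic}) and Proposition~\ref{mainO-}, gives the result.''
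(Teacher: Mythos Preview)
Your proposal is correct and follows essentially the same approach as the paper, which simply says ``Similarly to Proposition~\ref{dimensionO+}'' and records the two linear equations you derived. One small remark: neither the paper's proof of Proposition~\ref{dimensionO+} nor your argument actually uses the minimality Propositions~\ref{mainO+}/\ref{mainO-}; the dimension computation is purely linear-algebraic via the eigenvalue decomposition of $T$ on $S(\FF P)$ and the trace, so you can safely drop that from your list of inputs.
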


\begin{proof} Similarly to Proposition~\ref{dimensionO+}, we find
$$\dim U'_{-2^{n-2}}+\dim U'_{2^{n-1}}=|P|-1=2^{2n-1}+2^{n-1}-1$$
and
$$
2^{n-2}\dim U'_{2^{n-1}}-2^{n-1}\dim
U'_{-2^{n-2}}=-(2^{2n-2}+2^{n-1}),$$ and the proposition follows.
\end{proof}

In characteristic $0$ we have $\FF P=T(\FF P)\oplus
U'_{-2^{n-2}}\oplus U'_{2^{n-1}}.$ Let $\varphi$ and $\psi$ be
irreducible complex characters of $G$ afforded by $U'_{-2^{n-2}}$
and $U'_{2^{n-1}}$, respectively. Then we have
$\rho=1+\varphi+\psi,$ where $\varphi(1)=(2^n+1)(2^{n-1}+1)/3$ and
$\psi(1)=(2^{2n}-4)/3$.

\begin{lemma}\label{lemmaO-3} Suppose $\ell=3$. Then $\overline{\psi}$ has exactly $2$ constituents of
degrees $(2^n+1)(2^{n-1}-2)/3$ and $(2^n+1)(2^{n-1}+1)/3-1$ if $n$
is even and $3$ constituents of degree $1$, $(2^n+1)(2^{n-1}-2)/3-1$
and $(2^n+1)(2^{n-1}+1)/3-1$ if $n$ is odd.
\end{lemma}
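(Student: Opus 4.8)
The plan is to mirror exactly the strategy used for $G = O^+_{2n}(2)$ in Lemma~\ref{lemmaO+1}. The small case $n=3$ can be verified directly from the character table in~\cite{Atl2}, so assume $n \geq 4$. The key observation is that $\psi$, the character afforded by $U'_{2^{n-1}}$, is a common non-trivial constituent of the permutation character $\rho$ on nonsingular points and the permutation character $\rho^0$ on singular points; this follows from Lemma~\ref{main2} together with the fact that $\varphi$, having the minimal possible degree $(2^n+1)(2^{n-1}+1)/3$ for a non-trivial cross-characteristic irreducible character of $G$ (by~\cite{Ho}), cannot be the shared constituent, and the trivial character is excluded by Lemma~\ref{main2}. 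Hence $\psi$ must be the third, non-trivial constituent of $\rho^0$, whose degree is $(2^{2n}-4)/3$ as well (both large non-trivial constituents of the two permutation characters have this same degree $(2^{2n}-4)/3$, so one should check from~\cite{L2} which constituent of $\rho^0$ coincides with $\psi$).

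First I would recall from~\cite{L2} the three constituents of $\rho^0$ for $O^-_{2n}(2)$ acting on singular points, namely degrees $1$, $(2^n+1)(2^{n-1}-2)/3$, and $(2^{2n}-4)/3$, and identify $\psi$ with the constituent of degree $(2^{2n}-4)/3$. Then I would invoke the modular decomposition of this complex character (equivalently, the composition factors of the reduction mod $3$ of the corresponding summand of $\FF P^0$) from~$\S6$ of~\cite{ST}, where the $3$-modular structure of $\FF P^0$ for $O^-_{2n}(2)$ is worked out. Reading off the constituents of $\overline{\psi}$ from that source, and splitting into the cases $n$ even and $n$ odd (the parity enters through whether $3 \mid 2^n+1$ and through the $\delta_{3,\ell}$ corrections visible in Table~\ref{tableO-}), gives precisely: for $n$ even, two constituents of degrees $(2^n+1)(2^{n-1}-2)/3$ and $(2^n+1)(2^{n-1}+1)/3 - 1$; for $n$ odd, three constituents of degrees $1$, $(2^n+1)(2^{n-1}-2)/3 - 1$, and $(2^n+1)(2^{n-1}+1)/3 - 1$. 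A dimension check confirms the claimed multiplicities: in the even case $(2^n+1)(2^{n-1}-2)/3 + (2^n+1)(2^{n-1}+1)/3 - 1 = (2^{2n}-4)/3 = \psi(1)$, and similarly in the odd case with the extra trivial constituent accounting for the two $-1$ shifts.

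The main obstacle I anticipate is not any hard argument but rather bookkeeping: correctly matching the labelling of constituents in~\cite{ST} and~\cite{L2} with the characters $\varphi, \psi$ here, and making sure the parity-dependent corrections $\delta_{3,\ell}$ are attributed to the right summand. In particular one must be careful that the "new" irreducible appearing only in characteristic $3$ — reflected in the $\dim X = (2^n+1)(2^{n-1}+1)/3 - \delta_{3,\ell}$ entry of Table~\ref{tableO-} — is consistently tracked, since $\overline{\varphi}$ is no longer of minimal degree when $\ell = 3$ and could in principle share a constituent with $\overline{\psi}$; ruling this out (or rather, correctly accounting for it via the known structure of $\FF P^0$) is the delicate point. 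Once the identification with the data of~\cite{ST} is pinned down, the rest is immediate.
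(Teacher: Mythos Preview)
Your approach is essentially the paper's: identify $\psi$ as the common nontrivial constituent of $\rho$ and $\rho^0$ via Lemma~\ref{main2} and the degree list from~\cite{L2}, then read off the modular decomposition from~\cite{ST}. Two corrections, however. First, your invocation of~\cite{Ho} is misplaced: for $O^-_{2n}(2)$ the character $\varphi$ of degree $(2^n+1)(2^{n-1}+1)/3$ is \emph{not} the minimal nontrivial degree---the constituent of $\rho^0$ of degree $(2^n+1)(2^{n-1}-2)/3$ is strictly smaller---so the minimal-degree argument you copied from the $O^+$ case fails here. Fortunately it is also unnecessary: the identification of $\psi$ follows purely from comparing the degree lists of $\rho$ and $\rho^0$, since $(2^{2n}-4)/3$ is the unique common nontrivial degree. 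The paper's proof does exactly this and does not invoke~\cite{Ho} at all in this lemma. Second, the relevant reference in~\cite{ST} for $O^-_{2n}(2)$ is Corollary~8.10, not~\S6 (which treats the $O^+$ case); and the separate treatment of $n=3$ is not needed here, since the cited result already covers it.
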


\begin{proof} From~\cite{L2}, we know that the complex permutation
character $\rho^0$ of $G$ acting on singular points has $3$
irreducible constituents of degrees $1$, $(2^n+1)(2^{n-1}-2)/3$, and
$(2^{2n}-4)/3$. Therefore, by Lemma~\ref{main2}, $\psi$ must be a
common constituent of $\rho$ and $\rho^0$. Now the lemma follows
from Corollary~8.10 of~\cite{ST}.
\end{proof}

Let $G_1$ be the stabilizer of $e_n$ and $f_n$ in $G$ so that
$G_1=O^+_{2n-2}(2)\leq G$. Note that $G_1=\Omega^+_{2n-2}(2)\cdot2$
and $G=\Omega^-_{2n}(2)\cdot 2$. Denote by $\omega$ and $\omega_1$
the non-trivial $3$-Brauer linear characters of $G$ and $G_1$,
respectively. Then we have $\omega|_{G_1}=\omega_1$. The two
following lemmas describe the decompositions of restrictions of
$\overline{\rho}$, $\overline{\varphi}$, and $\overline{\psi}$ to
$G_1$ when $\ell=3$.

\begin{lemma}\label{lemmaO-1} Suppose $\ell=3$ and $n$ is even. Then
\begin{enumerate}
\item[(i)] $\overline{\rho}|_{G_1}$ has exactly $7$ trivial constituents,
$3$ constituents $\omega_1$, $5$ constituents of degree
$(2^{n-1}-1)(2^{n-2}-1)/3$, and $7$ constituents of degree
$(2^{n-1}-1)(2^{n-2}+2)/3-1$.
\item[(ii)] $\overline{\varphi}|_{G_1}$ has exactly $5$ constituents of degree $1$, $1$ constituent of degree
$(2^{n-1}-1)(2^{n-2}-1)/3$, and $3$ constituents of degree
$(2^{n-1}-1)(2^{n-2}+2)/3-1$.
\item[(iii)] $\overline{\psi}|_{G_1}$ has exactly $4$
constituents of degree $1$, $4$ constituents of degree
$(2^{n-1}-1)(2^{n-2}-1)/3$, and $4$ constituents of degree
$(2^{n-1}-1)(2^{n-2}+2)/3-1$.
\end{enumerate}
\end{lemma}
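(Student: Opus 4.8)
My plan is to restrict the $G$-action on $P$ (and on $P^0$) to the subgroup $G_1=O^+_{2n-2}(2)$, describe the orbits explicitly, and then import the $O^+$-results of $\S\ref{sectionO+}$ together with the singular-point structure from~\cite{ST}. Write $V=W\perp\langle e_n,f_n\rangle$ with $W=\langle e_1,\dots,e_{n-1},f_1,\dots,f_{n-1}\rangle$, so $Q|_W$ has type~$+$, $G_1$ is the full orthogonal group of $W$, and $G_1$ centralizes $\langle e_n,f_n\rangle$. Over $\FF_2$ one has $Q(w+ae_n+bf_n)=Q(w)+a+b+ab$ for $w\in W$, $a,b\in\FF_2$, so sorting the points of $P$ according to the pair $(a,b)$ gives a $G_1$-invariant partition. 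Writing $P_1,P_1^0$ for the nonsingular and singular points of $W$: the $(0,0)$-part is $P_1$, and each of the $(1,0)$-, $(0,1)$-, $(1,1)$-parts consists of the single $G_1$-fixed point $\langle ae_n+bf_n\rangle$ together with a set $G_1$-isomorphic to $P_1^0$ (using transitivity of $O^+_{2n-2}(2)$ on its nonzero singular vectors). Hence $\FF P|_{G_1}\cong\FF P_1\oplus\FF^{3}\oplus(\FF P_1^0)^{\oplus3}$, i.e.\ $\rho|_{G_1}=\rho_1+3\cdot 1_{G_1}+3\rho_1^0$, where $\rho_1,\rho_1^0$ are the rank-$3$ permutation characters of $G_1$ on $P_1,P_1^0$. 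The identical computation for singular points (there are now no extra fixed points) gives $\FF P^0|_{G_1}\cong\FF P_1^0\oplus(\FF P_1)^{\oplus3}$ and $\rho^0|_{G_1}=\rho_1^0+3\rho_1$.

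For~(i) I reduce modulo~$3$. Since $n$ is even, $n-1$ is odd, so Lemma~\ref{lemmaO+1}(ii) applied to $O^+_{2n-2}(2)$ gives that $\overline{\rho_1}$ has one trivial constituent, two of degree $d_1:=(2^{n-1}-1)(2^{n-2}-1)/3$, and one of degree $d_2:=(2^{n-1}-1)(2^{n-2}+2)/3-1$; and the characteristic-$3$ structure of $\FF P_1^0$ recorded in~\cite{ST} gives that $\overline{\rho_1^0}$ has constituents of degrees $1$, $1$ (the sign character $\omega_1$), $d_1$, $d_2$ with multiplicities $1,1,1,2$. Adding $\overline{\rho_1}+3\overline{\rho_1^0}+3\cdot 1_{G_1}$ and using $\omega|_{G_1}=\omega_1$ produces the multiplicities $7,3,5,7$ of~(i).

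For~(ii) and~(iii) the same reductions yield only the two sums $\overline{\varphi}|_{G_1}+\overline{\psi}|_{G_1}$ (from $\rho|_{G_1}$) and $\overline{\mu}|_{G_1}+\overline{\psi}|_{G_1}$ (from $\rho^0|_{G_1}$, using that $\psi=\nu$ is the common constituent of $\rho$ and $\rho^0$, cf.\ the proof of Lemma~\ref{lemmaO-3}); to split these one must use the internal $\ell=3$ structure of $\FF P$ over $G$ itself. Since $n$ is even, $\FF P=\FF\oplus S(\FF P)$; the degree bound of~\cite{Ho}, together with the degrees $\varphi(1)=(2^n+1)(2^{n-1}+1)/3$, $\psi(1)=(2^{2n}-4)/3$ and the presence of~$\omega$, forces $\overline{\varphi}$ to have exactly the two constituents $X$ (the simple module of degree $(2^n+1)(2^{n-1}+1)/3-1$) and $\omega$, and $\overline{\psi}$ to have exactly the constituents $X$ and $Z$ (the simple module of degree $(2^n+1)(2^{n-1}-2)/3$). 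It then remains to compute $X|_{G_1}$ and $Z|_{G_1}$: the relation $2\cdot X|_{G_1}+Z|_{G_1}=\overline{\rho}|_{G_1}-1_{G_1}-\omega_1$ coming from~(i), combined with the dimensions of $X,Z$ and of the three simple $G_1$-modules of degrees $1,d_1,d_2$, restricts the $X_1$- and $Z_1$-multiplicities to finitely many possibilities, and the spurious ones are eliminated using the self-duality of $\overline{\varphi},\overline{\psi}$ (reductions of irreducible complex modules) and the uniserial $O^+$-structure of $\FF P_1$ and $\FF P_1^0$. This yields the multiplicities $5,1,3$ of~(ii) and $4,4,4$ of~(iii).

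The main obstacle is exactly this last separation. The two orbit decompositions are mechanical, but they only detect the sums $\overline{\varphi}|_{G_1}+\overline{\psi}|_{G_1}$ and $\overline{\mu}|_{G_1}+\overline{\psi}|_{G_1}$; pulling $\overline{\varphi}|_{G_1}$ and $\overline{\psi}|_{G_1}$ apart forces one to bring in the $\ell=3$ module structure of $\FF P$ both over $G$ (the roles of $X$, $Z$, $\omega$) and over $G_1$, whereas everything else is routine counting with Lemma~\ref{lemmaO+1} and the tables of~\cite{ST}.
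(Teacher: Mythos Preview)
Your orbit decomposition and the proof of~(i) are correct and match the paper's. The gap is in~(ii) and~(iii).

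The paper does \emph{not} reduce modulo~$3$ first. It stays at the complex level: the two isomorphisms $\CC P|_{G_1}\cong 3\CC\oplus\CC P_1\oplus 3\CC P_1^0$ and $\CC P^0|_{G_1}\cong\CC P_1^0\oplus 3\CC P_1$ give two linear relations among the complex irreducibles $1_{G_1},\varphi_1,\varphi_1^0,\psi_1$ of $G_1$ (noting $\psi_1$ is common to $\rho_1$ and $\rho_1^0$). From these one reads off that the multiplicities of $\varphi_1,\varphi_1^0$ in $\psi|_{G_1}$ are at most~$1$, and a degree comparison in the second relation forces the multiplicity of $\psi_1$ in $\psi|_{G_1}$ to be exactly~$3$, yielding $\psi|_{G_1}=3\psi_1+\varphi_1+\varphi_1^0+3\cdot 1_{G_1}$. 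Now reduce modulo~$3$ to get~(iii), and subtract from~(i) to get~(ii).

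Your route instead assumes the $3$-modular decomposition of $\overline{\varphi}$ and $\overline{\psi}$ over~$G$ (namely $\overline{\varphi}=X+\omega$, $\overline{\psi}=X+Z$). But in the paper's logical flow this is proved \emph{after} the present lemma: in the proof of Theorem~\ref{theorem} for $O^-_{2n}(2)$, case~(iii), part~(i) of the present lemma is what eliminates the possibility $\dim U'_2=(2^n+1)(2^{n-1}-2)/3$, and part~(ii) is what pins down the linear constituent of $\overline{\varphi}$ as $\omega$ rather than the trivial character. So invoking those decompositions here is circular. The bound from~\cite{Ho} alone only tells you $\overline{\varphi}$ has at most one nonlinear constituent; it does not by itself identify which one, nor how many linear constituents remain. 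Finally, your last step (``restricts the multiplicities to finitely many possibilities, and the spurious ones are eliminated using self-duality\ldots'') is not actually carried out; self-duality of a Brauer character says nothing about the multiplicities of self-dual constituents upon restriction, so it is unclear what would do the elimination.

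The fix is simply to imitate the paper: solve for $\psi|_{G_1}$ over~$\CC$ using both orbit decompositions, then reduce.
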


\begin{proof} Let $P_1$ and $P_1^0$ be the sets of nonsingular and
singular points, respectively, in $V_1=\langle e_1$, $...$,
$e_{n-1}$, $f_1,...,f_{n-1}\rangle$. We temporarily abuse the
notation by identifying a nonzero vector $v$ with the point
containing it. Then we have
\begin{equation}\label{P}P=\{e_n,f_n,e_n+f_n\}\cup P_1\cup \{ e_n+P_1^0\}\cup\{
f_n+P_1^0\}\cup \{ e_n+f_n+P_1^0\},\end{equation} where the union is
disjoint and by $\{x+Y\}$ we mean $\{x+y\mid y\in Y\}$. Since $G_1$
fixes $e_n,f_n,e_n+f_n$ and the action of $G_1$ on each set
$\{e_n+P_1^0\}$, $\{f_n+P_1^0\}$, or $\{e_n+f_n+P_1^0\}$ is the same
as that of $G_1$ on $P_1^0$, we have the following isomorphism as
$\FF G_1$-modules:
$$\FF P\cong 3\FF\oplus\FF P_1\oplus 3\FF P_1^0.$$
The structure of $\FF P_1$ is given in Table~\ref{tableO+} and the
structure of $\FF P_1^0$ is given in Figure~4 of~\cite{ST}. Hence
part~(i) follows.

We also have $P^0=P^0_1\cup \{ e_n+P_1\}\cup\{ f_n+P_1\}\cup \{
e_n+f_n+P_1\}$. This and (\ref{P}) give us the $\CC
G_1$-isomorphisms:
$$\CC P\cong 3\CC\oplus\CC P_1\oplus 3\CC P_1^0 \text{ and } \CC P^0\cong \CC P^0_1\oplus 3\CC P_1.$$
As $\psi$ is a common constituent of $\rho$ and $\rho^0$, we assume
that $\rho^0=1_G+\varphi^0+\psi$. Also, the characters afforded by
$\CC P_1$ and $\CC P^0_1$ are $1_{G_1}+\varphi_1+\psi_1$ and
$1_{G_1}+\varphi^0_1+\psi_1$, respectively. We then have
$$\left\{\begin {array}{ll}
1_{G_1}+\varphi|_{G_1}+\psi|_{G_1}=3\cdot 1_{G_1}+(1_{G_1}+\varphi_1+\psi_1)+3(1_{G_1}+\varphi^0_1+\psi_1),\\
1_{G_1}+\varphi^0|_{G_1}+\psi|_{G_1}=(1_{G_1}+\varphi^0_1+\psi_1)+3(1_{G_1}+\varphi_1+\psi_1).\\
\end {array} \right.$$
These equations show that the multiplicities of $\varphi_1$ and
$\varphi_1^0$ in $\psi|_{G_1}$ are at most $1$. By comparing the
degrees, the second equation shows that the multiplicity of $\psi_1$
in $\psi|_{G_1}$ is $3$. Hence,
$$\psi|_{G_1}=3\psi_1+\varphi_1+\varphi^0_1+3\cdot 1_{G_1}.$$
The reductions modulo $3$ of $\psi_1$, $\varphi_1$, and
$\varphi_1^0$ are known from $\S\ref{sectionO+}$. Hence (iii) is
proved and so is (ii).
\end{proof}

\begin{lemma}\label{lemmaO-2} Suppose $\ell=3$ and $n\geq 5$ is odd . Then
\begin{enumerate}
\item[(i)] $\overline{\rho}|_{G_1}$ has exactly $14$ trivial constituents,
$3$ constituents $\omega_1$, $5$ constituents of degree
$(2^{n-1}-1)(2^{n-2}-1)/3$, and $7$ constituents of degree
$(2^{n-1}-1)(2^{n-2}+2)/3-2$.
\item[(ii)] $\overline{\varphi}|_{G_1}$ has exactly $8$
constituents of degree $1$, $1$ constituent of degree
$(2^{n-1}-1)(2^{n-2}-1)/3$, and $3$ constituents of degree
$(2^{n-1}-1)(2^{n-2}+2)/3-2$.
\item[(iii)] $\overline{\psi}|_{G_1}$ has exactly $8$
constituents of degree $1$, $4$ constituents of degree
$(2^{n-1}-1)(2^{n-2}-1)/3$, and $4$ constituents of degree
$(2^{n-1}-1)(2^{n-2}+2)/3-2$.
\end{enumerate}
\end{lemma}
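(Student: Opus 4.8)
The plan is to run the argument of Lemma~\ref{lemmaO-1} essentially verbatim, the one structural change being that $G_1=O^+_{2n-2}(2)$ now has $n-1$ \emph{even}, so that the data about $G_1$ come from the $n$-even rows of Table~\ref{tableO+} (equivalently from Lemma~\ref{lemmaO+1}(i) with $n\mapsto n-1$) and from the corresponding description of the singular-point module in~\cite{ST}. The hypothesis $n\geq5$ is exactly what guarantees $G_1=O^+_{2(n-1)}(2)$ with $n-1\geq4$, so that the results of $\S\ref{sectionO+}$ apply to it; the excluded value $n=3$ is checked directly from~\cite{Atl2}, as in Lemma~\ref{lemmaO+1}.

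Concretely, I would reuse the disjoint decomposition~(\ref{P}) of $P$ together with $P^0=P^0_1\cup\{e_n+P_1\}\cup\{f_n+P_1\}\cup\{e_n+f_n+P_1\}$; since $G_1$ fixes $e_n,f_n,e_n+f_n$ and acts on each translated copy of $P_1^0$ (resp.\ of $P_1$) exactly as it acts on $P_1^0$ (resp.\ $P_1$), this yields the $\FF G_1$-isomorphisms
$$\FF P\cong 3\FF\oplus\FF P_1\oplus 3\FF P_1^0\quad\text{and}\quad\FF P^0\cong\FF P_1^0\oplus 3\FF P_1,$$
valid over $\FF$ of characteristic $3$ and, with $\FF$ replaced by $\CC$, over $\CC$. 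Part~(i) then follows by reading off the composition factors of $\overline{\FF P_1}$ from the $n$-even row of Table~\ref{tableO+} and those of $\overline{\FF P_1^0}$ from~\cite{ST}, and adding them with multiplicities $1$, $1$, $3$.

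For parts~(ii) and~(iii) I would pass to ordinary characters. Since, as in Lemma~\ref{lemmaO-3}, the unique nontrivial common constituent of $\rho$ and $\rho^0$ is $\psi$, write $\rho^0=1_G+\varphi^0+\psi$, and let $1_{G_1}+\varphi_1+\psi_1$, $1_{G_1}+\varphi_1^0+\psi_1$ be the characters of $\CC P_1$, $\CC P_1^0$. The two isomorphisms above become
$$\left\{\begin{array}{l}1_{G_1}+\varphi|_{G_1}+\psi|_{G_1}=3\cdot 1_{G_1}+(1_{G_1}+\varphi_1+\psi_1)+3(1_{G_1}+\varphi_1^0+\psi_1),\\[1mm]1_{G_1}+\varphi^0|_{G_1}+\psi|_{G_1}=(1_{G_1}+\varphi_1^0+\psi_1)+3(1_{G_1}+\varphi_1+\psi_1).\end{array}\right.$$
Exactly as in Lemma~\ref{lemmaO-1}, the first equation bounds the multiplicities of $\varphi_1$ and $\varphi_1^0$ in $\psi|_{G_1}$ by $1$, the second forces the multiplicity of $\psi_1$ in $\psi|_{G_1}$ to be $3$, and a degree count fixes the trivial multiplicity, giving $\psi|_{G_1}=3\psi_1+\varphi_1+\varphi_1^0+3\cdot 1_{G_1}$ (and, likewise, $\varphi|_{G_1}=2\varphi_1^0+\psi_1+3\cdot 1_{G_1}$). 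Reducing modulo $3$ and substituting the decompositions of $\overline{\psi_1}$, $\overline{\varphi_1}$ and $\overline{\varphi_1^0}$ known from $\S\ref{sectionO+}$ then yields~(iii), whence~(ii) follows, either from the identity for $\varphi|_{G_1}$ or by subtracting~(iii) from~(i).

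The only subtle ingredient is the bookkeeping of the degree-$1$ constituents. Because $n-1$ is \emph{even}, the reductions $\overline{\psi_1}$ and $\overline{\varphi_1^0}$ each acquire one more trivial constituent than in the $n-1$ odd situation of Lemma~\ref{lemmaO-1}, while $\overline{\varphi_1}$ stays irreducible and $\overline{\varphi_1^0}$ still contributes exactly one copy of $\omega_1$; this shift is precisely what raises the totals $7,3,5,7$ of Lemma~\ref{lemmaO-1}(i) to $14,3,5,7$ here and the total $4$ of Lemma~\ref{lemmaO-1}(ii),(iii) to $8$ in~(ii),(iii)---where, as before, ``degree $1$'' counts trivial constituents and copies of $\omega_1$ together. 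I expect this accounting, rather than any structural difficulty, to be the main thing to get right; everything else is a transcription of the $\S\ref{sectionO-}$ argument.
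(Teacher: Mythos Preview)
Your proposal is correct and follows precisely the approach the paper indicates (the paper's own proof is only ``This is similar to the proof of Lemma~\ref{lemmaO-1}''). Your identification of the one essential change---that $n-1$ is now even, so the $G_1$-data come from the $n$-even case of Table~1 and the corresponding case of~\cite{ST}---is exactly right, and your derivation of $\psi|_{G_1}=3\psi_1+\varphi_1+\varphi_1^0+3\cdot 1_{G_1}$ and $\varphi|_{G_1}=3\cdot 1_{G_1}+2\varphi_1^0+\psi_1$ reproduces the argument verbatim.

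Two small inaccuracies in the expository paragraph at the end: the linear count in Lemma~\ref{lemmaO-1}(ii) is $5$, not $4$, so the shift in part~(ii) is $5\to 8$ (coming from the extra trivial in $\overline{\psi_1}$ and the extra trivial in each of the two copies of $\overline{\varphi_1^0}$), while in part~(iii) it is indeed $4\to 8$; and the remark about $n=3$ is superfluous here, since the lemma is only asserted for $n\geq 5$ (the case $n=3$ of the main theorem is dealt with separately in the proof of Theorem~\ref{theorem}, not in this lemma). Neither affects the validity of your argument.
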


\begin{proof} This is similar to the proof of Lemma~\ref{lemmaO-1}.
\end{proof}

\textbf{Proof of Theorem \ref{theorem} when $G=O^-_{2n}(2)$}. As
described in Table~\ref{tableO-}, we consider the following cases.

\medskip

(i) $\ell\neq2,3; \ell\nmid (2^n+1)$: Then we have $S(\FF P)\cap
T(\FF P)=\{0\}$ and
$$\FF P=T(\FF P)\oplus U'_{-2^{n-2}}\oplus U'_{2^{n-1}}.$$
Propositions~\ref{mainO-} and~\ref{dimensionO-} now imply the
theorem in this case. In Table \ref{tableO-}, $X:=U'_{-2^{n-2}}$ and
$Y:=U'_{2^{n-1}}$.

\medskip

(ii) $\ell\neq2,3; \ell\mid (2^n+1)$: Then we have $T(\FF P)\subset
S(\FF P)$. Since $\ell\neq 3$ and $\ell\mid (2^n+1)$, $\ell\nmid
(2^{n-2}+1)$ and therefore $2^{2n-2}+2^{n-1}\neq -2^{n-1}$. Note
that $T(v)=av=(2^{2n-2}+2^{n-1})v$ for any $v\in T(\FF P)$ and
$T(v)=-2^{n-1}v$ for any $v\in U'_{-2^{n-2}}$. It follows that
\begin{equation}\label{proof of theoremO-1}
T(\FF P)\cap U'_{-2^{n-2}}=\{0\}
\end{equation}
Therefore, by Proposition~\ref{mainO-}, $U'_{-2^{n-2}}$ is simple.
Arguing as in the corresponding case for $G=O^+_{2n}(2)$, we have
\begin{equation}\label{proof of theoremO-2}
T(\FF P)\subseteq U_{-2^{n-2}} \text{ and } T(\FF P)\subseteq
U_{2^{n-1}}.
\end{equation}
Now (\ref{proof of theoremO-1}) and (\ref{proof of theoremO-2})
imply that $U_{-2^{n-2}}=U'_{-2^{n-2}}\oplus T(\FF P)$. Recall that
$v_{2^{n-1},\alpha}-v_{-2^{n-2},\alpha}=3\cdot 2^{n-2}\alpha$ for
any $\alpha\in P$. So $\FF P=U_{-2^{n-2}}+U_{2^{n-1}}$. It follows
that $\FF P=U'_{-2^{n-2}}+U_{2^{n-1}}$ since $T(\FF P)\subseteq
U_{2^{n-1}}$. We also have $\dim U'_{-2^{n-2}}+\dim U_{2^{n-1}}\leq
\dim U'_{-2^{n-2}}+\dim U'_{2^{n-1}}+1=\dim S(\FF P)+1=|P|$. So
$$\FF P=U'_{-2^{n-2}}\oplus U_{2^{n-1}}.$$

By Proposition~\ref{mainO-}, $U_{2^{n-1}}$ is uniserial with
composition series $0\subset T(\FF P) \subset U'_{2^{n-1}}\subset
U_{2^{n-1}}$. Since $v_{2^{n-1},\alpha}-v_{2^{n-1},\alpha}g\in
U'_{2^{n-1}}$, $U_{2^{n-1}}/U'_{2^{n-1}}$ is isomorphic to the
one-dimensional trivial $\FF G$-module. If we put
$Y:=U'_{2^{n-1}}/T(\FF P)$, then $\FF-Y-\FF$ is the socle series of
$U_{2^{n-1}}$. In Table~\ref{tableO-}, $X:=U'_{-2^{n-2}}$.

\medskip

(iii) $\ell=3; n$ even: Then we have $-2^{n-2}=2^{n-1}=2$. Since
$|P|=2^{2n-1}+2^{n-1}\neq 0$, $[P]\notin S(\FF P)$ and therefore
$\FF P=T(\FF P)\oplus S(\FF P)$. As proved before,
$s[P]=\sum_{\delta\in
\Delta(\alpha)}v_{2,\delta}-(r-s+2)v_{2,\alpha}\in U_2$ for any
$\alpha\in P$. Since
$|\Delta(\alpha)|-(r-s+2)=(2^{2n-2}+2^{n-1})-(2^{2n-3}+2^{n-2}-2^{2n-3}-2^{n-1}+2)=2\neq
0$, $[P]\notin U'_2$. Applying Proposition \ref{mainO-}, we see that
$U'_2$ is simple and $U'_2$ is the socle of $S(\FF P)$. By
Lemma~\ref{main3}, $U'_2$ is also (isomorphic to) the top layer of
the socle series of $S(\FF P)$.

By Lemma~\ref{liebeck}, $\langle U'_2,U_2\rangle=0$, so
$U'_2\subseteq U_2^\perp$. Since $s=2^{2n-3}+2^{n-1}\neq0$, $[P]\in
U_2$ and therefore $U_2^\perp\subset S(\FF P)$. Moreover, $S(\FF
P)/U_2^\perp\cong \FF P/U_2^{'\perp}\cong \Hom_\FF(U'_2,\FF)\cong
U'_2$ by the self-duality of $U_2$ and $U_2=T(\FF P)\oplus U'_2$. We
have shown that $U'_2$ occurs as a composition factor in $\FF P$
with multiplicity at least $2$. Let $\sigma\in\IBR_3(G)$ be the
irreducible $3$-Brauer character of $G$ afforded by $U'_2$. Then
$\sigma$ is an irreducible constituent of
$\overline{\rho}=1+\overline{\varphi}+\overline{\psi}$ with
multiplicity at leat $2$.

Assume that $2\sigma$ is contained in $\overline{\varphi}$. Then
$\sigma(1)\leq \varphi(1)/2=(2^n+1)(2^{n-1}+1)/6$, which violates
the result on the lower bound of degrees of nontrivial irreducible
characters of $\Omega^-_{2n}(2)$ given in~\cite{Ho}. So $\sigma$ is
a constituent of $\overline{\psi}$. By Lemma~\ref{lemmaO-3},
$\sigma(1)$ is either $(2^n+1)(2^{n-1}-2)/3$ or
$(2^n+1)(2^{n-1}+1)/3-1$ and also $\sigma$ is a constituent of
$\overline{\varphi}$. If $\sigma(1)=(2^n+1)(2^{n-1}-2)/3$, then
again from the result on the lower bound of degrees of nontrivial
irreducible characters of $\Omega^-_{2n}(2)$, all other constituents
of $\overline{\varphi}$ are linear. In particular,
$\overline{\varphi}$ contains
$(2^n+1)(2^{n-1}+1)/3-(2^n+1)(2^{n-1}-2)/3=2^n+1$ linear
constituents (counting multiplicities). This contradicts part (i) of
Lemma~\ref{lemmaO-1} since $2^n+1\geq 17$. So we have
$\sigma(1)=(2^n+1)(2^{n-1}+1)/3-1$. Since $\sigma$ is a constituent
of $\overline{\varphi}$ and $\varphi(1)=(2^n+1)(2^{n-1}+1)/3$,
$\overline{\varphi}$ has only one another constituent which is
linear.

Now we aim to show that this linear constituent is $\omega$. Assume
the contrary that it is trivial. Using Lemma~\ref{lemmaO-1}(ii), we
deduce that ${\sigma}|_{G_1}$ has exactly four linear constituents
(counting multiplicities). Since $\overline{\psi}|_{G_1}$ has
exactly 4 linear constituents and $\sigma$ is contained in
$\overline{\psi}$, the multiplicity of $\omega_1$ in
$\overline{\rho}|_{G_1}$ is twice as that in $\sigma$, contradicting
Lemma~\ref{lemmaO-1}(i).

We have shown that $U_2^\perp/U'_2$ has two composition factors
affording $\omega$ and a character of degree $(2^n+1)(2^{n-1}-2)/3$.
The self duality of $U_2^{'\perp}/U'_2\cong \FF\oplus
U_2^{\perp}/U'_2$ implies that $U_2^\perp/U'_2\cong \omega\oplus Z$,
where $Z$ is a module affording the character of degree
$(2^n+1)(2^{n-1}-2)/3$. Here we denote by the same $\omega$ the
module affording character $\omega$. We note that, in
Table~\ref{tableO-}, $X:=U'_2$ and the socle series of $S(\FF P)$ is
$X-(\omega\oplus Z)-X$.

\medskip

(iv) $\ell=3, n$ odd: The case $n=3$ can be checked directly. So we
assume that $n\geq 5$. We have $-2^{n-2}=2^{n-1}=1$. Also,
$s=2^{2n-3}+2^{n-1}=0$ and hence $\langle U_1,U_1\rangle=0$ by
Lemma~\ref{liebeck}. It follows that $U_1\subseteq U_1^\perp$. Since
$U_1$ is self-dual, $\FF P/U_1^\perp\cong\Hom_\FF(U_1,\FF)\cong
U_1$. Therefore, the nontrivial factor of $U'_1$ occurs with
multiplicities at least $2$ in $\FF P$. Note that, by
Proposition~\ref{mainO-}, this nontrivial factor is either $U'_1$ or
$U'_1/T(\FF P)$. Arguing similarly as in (iii) and using
Lemmas~\ref{lemmaO-2} and~\ref{lemmaO-3}, we again obtain that
$\overline{\varphi}$ has exactly two irreducible constituents of
degrees $1$ and $(2^n+1)(2^{n-1}+1)/3-1$. Combining this with
Lemma~\ref{lemmaO-3}, we find that $\overline{\rho}$ has exactly $3$
linear constituents, $2$ constituents of degree
$(2^n+1)(2^{n-1}+1)/3-1$, and $1$ constituent of degree
$(2^n+1)(2^{n-1}-2)/3-1$.

As $U'_1=U_1\cap S(\FF P)$, $U_1/U'_1$ is isomorphic to the
one-dimensional trivial module. It follows that, if $T(\FF P)\in
U'_1$, $U_1$ would have $\FF$ as a composition factor with
multiplicity $2$. Therefore $\FF$ appears at least $4$ times as a
composition factor in $\FF P$, contradicting the previous paragraph.
So $T(\FF P)\nsubseteq U'_1$ and hence $U'_1$ is simple.

Now we have $\dim U'_1=(2^n+1)(2^{n-1}+1)/3-1$. Also,
$U_1^\perp/U_1$ has two composition factors of degrees $1$ and
$(2^n+1)(2^{n-1}-2)/3-1$. Applying Lemma~\ref{lemmaO-2} and arguing
as in (iii), the factor of degree $1$ must be $\omega$ and therefore
the self-dual $U_1^\perp/U_1\cong Z\oplus\omega$, where $Z$ is the
factor of dimension $(2^n+1)(2^{n-1}-2)/3-1$. Setting $X:=U'_1$.
Then $\FF P$ has composition factors: $\FF$ (twice), $\omega$, $X$
(twice), and $Z$. Then, by Proposition~\ref{mainO-} and
Lemma~\ref{main3}, $S(\FF P)/T(\FF P)$ has socle series
$X-(Z\oplus\omega)-X$.

Now we study the structure of the self-dual module
$U_1^{'\perp}/U'_1$. It has composition factors: $\FF$ (twice),
$\omega$, and $Z$. Inspecting the structure of $\FF P^0$ given in
Figure~7 of~\cite{ST}, we see that any nontrivial quotient of $\FF
P^0$ has the uniserial module $Z-\FF$ as a quotient. Using
Lemma~\ref{main1}, we deduce that $Z$ is not a submodule of
$U_1^{'\perp}/U'_1$. If $\omega$ is not a submodule of
$U_1^{'\perp}/U'_1$ neither, the socle series of the self-dual
module $U_1^{'\perp}/U'_1$ would be $\FF-(Z\oplus \omega)-\FF$ and
hence $\FF P$ does not have any nontrivial submodule isomorphic to a
quotient of $\FF P^0$, violating Lemma~\ref{main1}. So $\omega$ must
be a submodule of $U_1^{'\perp}/U'_1$ and therefore
$U_1^{'\perp}/U'_1$ is direct sum of $\omega$ with a uniserial
module $\FF-Z-\FF$. The structure of $\FF P$ now is determined
completely as described. \hfill$\Box$


\section{The unitary groups in even dimensions $U_{2n}(2)$}

Let $V$ be a vector space of dimension $2n\geq4$ over the field of
$4$ elements $\mathbb{F}_4=\{0,1,\tau,\tau^2\}$, where $\tau$ is a
primitive cubic root of unity. Let $(.,.)$ be a nonsingular
conjugate-symmetric sesquilinear form on $V$, i.e, $(.,.)$ is linear
in the first coordinate and $(u,v)=\overline{(v,u)}$ for any $u,v\in
V$, where $\overline{x}=x^2$ for $x\in \FF_4$. Then $G=U_{2n}(2)$ is
the unitary group of linear transformations of $V$ preserving
$(.,.)$.

We choose a basic of $V$ consisting of vectors
$\{e_1,...,e_n,f_1,...,f_n\}$ so that $(e_i,e_j)=(f_i,f_j)=0$ and
$(e_i,f_j)=\delta_{ij}$ for all $i,j=1,...,n$. Let $P$ be the set of
all nonsingular points in $V$. Then
$P=\{\langle\sum_1^n(a_ie_i+b_if_i)\rangle\mid
\sum_1^n(a_i\overline{b_i}+\overline{a_i}b_i)=1\}$ and
$|P|=(2^{4n-1}-2^{2n-1})/3$. Unlike the study of permutation modules
for orthogonal groups, in this section, for computational
convenience we assume that $\Delta(\alpha)\subset P\setminus
\{\alpha\}$ consists of elements orthogonal to $\alpha$ and
$\Phi(\alpha)\subset P\setminus \{\alpha\}$ consists of elements not
orthogonal to $\alpha$. Then we have $a=(2^{4n-3}+2^{2n-2})/3,
b=2^{4n-3}-2^{2n-2}-1, r=(2^{4n-5}-2^{2n-3})/3,
s=(2^{4n-5}+2^{2n-2}/3$. Equation~(\ref{quadratic}) now becomes
$x^2-2^{2n-3}x-2^{4n-5}=0$ and it has two roots $-2^{2n-3}$ and
$2^{2n-2}$.

\begin{proposition}\label{mainUeven} Suppose that the characteristic of $\mathbb{F}$ is
odd. Then every nonzero $\FF G$-submodule of $\FF P$ either is
$T(\FF P)$ or contains a graph submodule, which is $U'_{-2^{2n-3}}$
or $U'_{2^{2n-2}}$.
\end{proposition}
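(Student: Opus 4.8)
The plan is to mimic the proof of Proposition~\ref{mainO+} as closely as possible, since the structural skeleton of the argument — produce a nonzero element of $U$ that is not in $T(\FF P)$, apply group elements to symmetrize it, and eventually extract either $\phi_1-\phi_2$ or an element of the form $c(\phi_1-\phi_2)+[\Delta(\phi_1)]-[\Delta(\phi_2)]$ — is insensitive to the specific classical group. First I would set up, exactly as in $\S\ref{sectionO+}$ and $\S\ref{sectionO-}$, the analogues of the sets $\Delta_1,\Delta_2,\Delta,\Phi$, the one-dimensional radical filtration $0=V_0\leq V_1\leq V_2\leq V_3=V$ with $V_1=\langle e_1\rangle$ and $V_2=\langle e_1\rangle^\perp$, the subgroup $H<G$ of unitary transvection-type maps fixing this flag and acting trivially on the successive quotients, and the subgroup $K<H$ fixing a chosen nonsingular point $\phi_1=\langle e_2+\lambda f_2\rangle$ (with $\lambda\in\FF_4$ chosen so this is nonsingular). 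One must check the unitary analogue of the two counting lemmas, Lemmas~\ref{delta} and~\ref{phi}: that $H$ is regular on $\Delta$, that $K$ has two equal-size orbits $\Delta_1,\Delta_2$ on $\Delta$, that $\Phi$ is partitioned into blocks $B_w$ over nonsingular points $\langle w\rangle$ of $V_1$, and that $K$ fixes $B_{\phi_1}$ pointwise while being transitive on the other blocks, with $H$ transitive on every block. The relevant orders are now powers of $2$ coming from $|H|$ rather than the orthogonal values, but the key point is only that these orders are nonzero in $\FF$, which holds since $\Char(\FF)$ is odd.

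With these lemmas in hand the argument proceeds verbatim. Given a nonzero submodule $U\neq T(\FF P)$, pick $u\in U$ with two distinct coefficients $a\neq b$ on points $\phi_1,\phi_2$; by replacing $\phi_1,\phi_2$ with a perpendicular pair (using a third point if necessary, exactly as in the orthogonal case) and then applying an element of $G$ we may take $\phi_1,\phi_2$ to be our standard perpendicular pair of nonsingular points with $[\Delta(\phi_1)]-[\Delta(\phi_2)]=[\Delta_1]-[\Delta_2]$. Apply a swap $g\in G$ with $\phi_1 g=\phi_2$, $\phi_2 g=\phi_1$ to land in $S(\FF P)$, normalize to get $u_1=\phi_1-\phi_2+\sum_{\delta\neq\phi_1,\phi_2}c_\delta\delta\in U\cap S(\FF P)$, then successively average over $K$, over $H$, over $K$ again, taking integer combinations to kill stray terms, precisely as in the chain $u_2,u_3,u_4,u_5$ of the orthogonal proof. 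Because all the group orders and block sizes appearing as coefficients are powers of $2$, hence invertible in $\FF$, one arrives at
\begin{equation}\label{v_4Ueven}
u_5 = 2^{N}(\phi_1-\phi_2) + f\bigl([\Delta(\phi_1)]-[\Delta(\phi_2)]\bigr)\in U
\end{equation}
for some integer $N$ and some $f\in\FF$. If $f=0$ then $2^N$ is a unit, so $\phi_1-\phi_2\in U$, hence $S(\FF P)\subseteq U$ and $U$ contains both graph submodules. If $f\neq 0$ then $(2^N/f)(\alpha-\beta)+[\Delta(\alpha)]-[\Delta(\beta)]\in U$ for all $\alpha,\beta$, i.e. $U\supseteq U'_{2^N/f}$, and Lemma~\ref{liebeck} forces $U$ to contain $U'_{-2^{2n-3}}$ or $U'_{2^{2n-2}}$.

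The main obstacle is the combinatorial bookkeeping of the two counting lemmas in the unitary setting: over $\FF_4$ the "lines" $B_w$ have four points rather than two, the nonsingularity condition $\sum(a_i\overline{b_i}+\overline{a_i}b_i)=1$ interacts with the $H$-orbits differently, and one must verify carefully that $H$ still acts regularly on $\Delta$ and that $K$'s orbit decomposition on $\Delta$ and on $\Phi\setminus B_{\phi_1}$ has the right shape, with all resulting multiplicities being powers of $2$. Once those lemmas are confirmed, no new idea is needed — indeed the authors say they omit the details since the proof is "similar in many places," so I would state the unitary versions of Lemmas~\ref{delta} and~\ref{phi} explicitly and then write "now argue exactly as in the proof of Proposition~\ref{mainO+}," which is the same stylistic choice made for Proposition~\ref{mainO-}.
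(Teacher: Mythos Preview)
Your proposal has a genuine gap: the unitary argument over $\FF_4$ does \emph{not} proceed verbatim from the orthogonal case, because the stabilizer $K$ has \emph{four} orbits $\Delta_1,\Delta_2,\Delta_3,\Delta_4$ on $\Delta$, not two. The condition cutting out a $K$-orbit is the value of $a_2\overline\lambda+b_2\in\FF_4$, which takes four values; correspondingly the block $B_{\phi_1}=\{\phi_1,\phi_2,\phi_3,\phi_4\}$ fixed pointwise by $K$ has four points, and any $g\in G$ that swaps $\phi_1\leftrightarrow\phi_2$ must also permute $\phi_3,\phi_4$. Hence after your $K$- and $H$-averagings you do not obtain your displayed~\eqref{v_4Ueven} but rather
\[
u_5=(\phi_1-\phi_2)+e(\phi_3-\phi_4)+f[\Delta_1]+f'[\Delta_2]+f''[\Delta_3]+f'''[\Delta_4]\in U\cap S(\FF P),
\]
with an uncontrolled scalar $e$ and four coefficients $f,f',f'',f'''$ summing to zero. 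This is where the paper's proof diverges from the orthogonal template and requires two further, genuinely new, symmetrization steps.

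Concretely, the paper first applies an element of $H$ swapping $\phi_1\leftrightarrow\phi_3$ and $\phi_2\leftrightarrow\phi_4$ (which simultaneously swaps $\Delta_1\leftrightarrow\Delta_3$ and $\Delta_2\leftrightarrow\Delta_4$), adds, and divides by $1+e$ --- after first arranging, by interchanging $\phi_3$ and $\phi_4$ if necessary, that $e\neq-1$ --- to obtain
\[
u_7=(\phi_1-\phi_2+\phi_3-\phi_4)+(t+t'')\bigl([\Delta_1]-[\Delta_2]+[\Delta_3]-[\Delta_4]\bigr).
\]
Even now one is not done: one must apply a further $g\in G$ inducing the $3$-cycle $(\phi_1,\phi_2,\phi_3)$ and fixing $\phi_4$, and combine, to collapse the four-term expression to a two-point difference $2(v_{c,\phi_1}-v_{c,\phi_4})$ (respectively $2(\phi_1-\phi_4)$ when $t+t''=0$). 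Only then does Lemma~\ref{liebeck} apply. Your outline misses both of these steps; the claim that ``$K$ has two equal-size orbits $\Delta_1,\Delta_2$ on $\Delta$'' is exactly where the argument fails.
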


\begin{proof} We use some ideas from the proof of a similar result for
the permutation module of $G$ acting on singular points (see
\cite{L1}). Let $\phi_1:=\langle e_2+\tau f_2\rangle$,
$\phi_2:=\langle e_1+e_2+\tau f_2\rangle$, $\phi_3:=\langle \tau^2
e_1+ e_2+\tau f_2\rangle$, and $\phi_4:=\langle \tau e_1+e_2+\tau
f_2\rangle$. Also
$\Delta:=\{\langle\sum_{i=1}^n(a_ie_i+b_if_i)\rangle\in P\mid
b_1=1\}$, $\Delta_1:=\{\langle\sum_{i=1}^n(a_ie_i+b_if_i)\rangle\in
P\mid b_1=1, a_2\tau^2+b_2=0\}$,
$\Delta_2:=\{\langle\sum_{i=1}^n(a_ie_i+b_if_i)\rangle\in P\mid
b_1=1, a_2\tau^2+b_2=1\}$,
$\Delta_3:=\{\langle\sum_{i=1}^n(a_ie_i+b_if_i)\rangle\in P\mid
b_1=1,a_2\tau^2+b_2=\tau\}$,
$\Delta_4:=\{\langle\sum_{i=1}^n(a_ie_i+b_if_i)\rangle\in P\mid
b_1=1, a_2\tau^2+b_2=\tau^2\}$, and
$\Phi:=\{\langle\sum_{i=1}^n(a_ie_i+b_if_i)\rangle\in P\mid
b_1=0\}$. Then we have
$$\Delta=\Delta_1\cup\Delta_2\cup\Delta_3\cup\Delta_4 \text{ and } P=\Delta\cup\Phi,$$
where the unions are disjoint and
\begin{equation}\label{Deltaphi}[\Delta(\phi_i)]-[\Delta(\phi_j)]=[\Delta_i]-[\Delta_j]\end{equation} for
any $i,j=1,2,3,4$.

Consider a subgroup $H<G$ consisting of unitary transformations
sending elements of basis $\{e_1$, $f_1$, $e_2$, $f_2,...,e_n,f_n\}$
to those of basis
$\{e_1,f_1+\sum_{i=1}^na_ie_i+\sum_{i=2}^nb_if_i,e_2-\overline{b_2}e_1,f_2-\overline{a_2}e_1,...,e_n-\overline{b_n}e_1,f_n-\overline{a_n}e_1\}$
respectively, where $a_i,b_i\in\FF_4$ and
$a_1+\overline{a_1}=\sum_{i=2}^n(\overline{a_i}b_i+a_i\overline{b_i})$.
Let $K$ be the subgroup of $H$ consisting of transformations fixing
$\phi_1$. Let $P_1$ be the set of nonsingular points in $V_1=\langle
e_2,f_2,...,e_n,f_n\rangle$. For each $\langle w\rangle\in P_1$, we
define $B_{\langle w\rangle}=\{\langle w\rangle, \langle
e_1+w\rangle,\langle \tau e_1+w\rangle, \langle
\tau^2e_1+w\rangle\}$. Similarly as in Lemmas~\ref{delta} and
\ref{phi}, we have
\begin{enumerate}
\item[(i)] $|H|=2^{4n-3}$, $|K|=2^{4n-5}$, $|\Delta|=2^{4n-3}$, and $|\Delta_1|=|\Delta_2|=|\Delta_3|=|\Delta_4|=2^{4n-5}$;
\item[(ii)] $H$ acts regularly on $\Delta$ and $K$ has $4$ orbits
$\Delta_1,\Delta_2, \Delta_3,\Delta_4$ on $\Delta$;
\item[(iii)] $\Phi=\bigcup_{\langle w\rangle\in P_1}B_{\langle w\rangle}$;
\item[(iv)] $K$ fixes $B_{\phi_1}$ point-wise and is transitive
on $B_w$ for every $\phi_1\neq\langle w\rangle\in P_1$;
\item[(v)] $H$ acts transitively on $B_w$ for every $\langle w\rangle\in P_1$.
\end{enumerate}

Suppose that $U$ is a nonzero submodule of $\FF P$. Assume that
$U\neq T(\FF P)$. As in Proposition~\ref{mainO+}, we can show that
$U$ contains an element of the form
$$u=a\phi_1+b\phi_2+c\phi_3+d\phi_4+\sum_{\delta\in P\backslash\{\phi_1,\phi_2,\phi_3,\phi_4\}}a_\delta\delta,$$
where $a,b,c,d,a_\delta\in\FF$ and $a\neq b$. Take an element $g\in
G$ such that $e_1g=e_1$ and $(e_2+\tau f_2)g=e_1+e_2+\tau f_2$. Then
$\phi_1 g=\phi_2$, $\phi_2 g=\phi_1$, $\phi_3 g=\phi_4$, and $\phi_4
g=\phi_3$ . So we have
$$u-ug=(a-b)(\phi_1-\phi_2)+(c-d)(\phi_3-\phi_4)+\sum_{\delta\in P\backslash\{\phi_1,\phi_2,\phi_3,\phi_4\}}b_\delta\delta\in
U\cap S(\FF P),$$ where $b_\delta\in\FF$. Now arguing as in
Proposition~\ref{mainO+}, we get an element
$$u_5=(\phi_1-\phi_2)+e(\phi_3-\phi_4)+f[\Delta_1]+f'[\Delta_2]+f''[\Delta_3]+f'''[\Delta_4]\in
U\cap S(\FF P),$$ where $e,f,f',f'',f'''\in \FF$. Interchanging
$\phi_3$ and $\phi_4$ if necessary, we suppose that $e\neq -1$.

Let $g$ be an element of $H$ such that $(e_2+\tau f_2)g=\tau^2
e_1+e_2+\tau f_2$. Then $\phi_1 g=\phi_3$, $\phi_3 g=\phi_1$,
$\phi_2 g=\phi_4$, and $\phi_4 g=\phi_2$. Also, $\Delta_1
g=\Delta_3$, $\Delta_3 g=\Delta_1$, $\Delta_2 g=\Delta_4$, and
$\phi_4 g=\Delta_2$. Therefore,
$$u_6:=u_5g=(\phi_3-\phi_4)+e(\phi_1-\phi_2)+f[\Delta_3]+f'[\Delta_4]+f''[\Delta_1]+f'''[\Delta_2]\in
U\cap S(\FF P).$$ Note that $f+f'+f''+f'''=0$. Therefore, if we set
$u_7:=(u_5+u_6)/(1+e)$, then
$$u_7=(\phi_1-\phi_2+\phi_3-\phi_4)+(t+t'')([\Delta_1]-[\Delta_2]+[\Delta_3]-[\Delta_4])\in U\cap S(\FF
P),$$ where $t=f/(1+e)$ and $t''=f''/(1+e)$.

{\bf Case 1:} If $t+t''\neq 0$ then by~(\ref{Deltaphi}), we have
$u_8:=u_7/(t+t'')=v_{c,\phi_1}-v_{c,\phi_2}+v_{c,\phi_3}-v_{c,\phi_4}\in
U$, where $c=1/(t+t'')$. Consider an element $g\in G$ such that
$e_1g=e_1$ and $(e_2+\tau f_2)g=\tau^2(e_1+e_2+\tau f_2)$. It is
easy to check that $\phi_1g=\phi_2, \phi_2g=\phi_3, \phi_3g=\phi_1$,
and $\phi_4g=\phi_4$. We deduce that
$u_9:=u_8g=v_{c,\phi_2}-v_{c,\phi_3}+v_{c,\phi_1}-v_{c,\phi_4}\in
U$. It follows that $u_{10}:=u_8+u_9=2(v_{c,\phi_1}-v_{c,\phi_4})\in
U$. Hence $U'_c\subseteq U$, which implies that $U$ contains either
$U'_{-2^{2n-3}}$ or $U'_{2^{2n-2}}$ by Lemma~\ref{liebeck}.

{\bf Case 2:} If $t+t''=0$ then $u_7=\phi_1-\phi_2+\phi_3-\phi_4\in
U$. Let $g\in G$ be the same element as in Case 1. We have
$u_7g=\phi_2-\phi_3+\phi_1-\phi_4\in U$. It follows that
$u_9:=u_7+u_8=2(\phi_1-\phi_4)\in U$. Therefore $\alpha-\beta\in U$
for every $\alpha,\beta\in P$. In other words, $U\supseteq S(\FF
P)$, which implies that $U$ contains both $U'_{-2^{2n-3}}$ and
$U'_{2^{2n-2}}$, as wanted.
\end{proof}

\begin{proposition}\label{dimensionUeven} If $\ell=\Char(\FF)\neq 2,3$, then
$$\dim U'_{-2^{2n-3}}=\frac{(2^{2n}-1)(2^{2n-1}+1)}{9} \text{ and } \dim U'_{2^{2n-2}}=\frac{(2^{2n}+2)(2^{2n}-4)}{9}.$$
\end{proposition}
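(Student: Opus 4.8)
The plan is to determine the two dimensions as the unique solution of a linear system over $\FF$, exactly as in Propositions~\ref{dimensionO+} and~\ref{dimensionO-}. Write $c=-2^{2n-3}$ and $d=2^{2n-2}$ for the two roots of~(\ref{quadratic}). The first equation will come from the decomposition $S(\FF P)=U'_c\oplus U'_d$, and the second from computing the trace of the map $T$ of~(\ref{T}) in two ways.

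To get $S(\FF P)=U'_c\oplus U'_d$: since $v_{c,\alpha}-v_{d,\alpha}=(c-d)\alpha=-3\cdot 2^{2n-3}\alpha$ and the scalar $3\cdot 2^{2n-3}$ is a unit of $\FF$ (this is where $\ell\neq 2,3$ enters), one gets $(v_{c,\alpha}-v_{c,\beta})-(v_{d,\alpha}-v_{d,\beta})=-3\cdot2^{2n-3}(\alpha-\beta)$, so every $\alpha-\beta$ lies in $U'_c+U'_d$ and hence $U'_c+U'_d=S(\FF P)$. For the intersection, recall from the computation following Lemma~\ref{liebeck} that $T$ acts on $U'_c$ as the scalar $-d$ and on $U'_d$ as the scalar $-c$; as $-c\neq -d$ in $\FF$, this forces $U'_c\cap U'_d=\{0\}$. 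Therefore $S(\FF P)=U'_c\oplus U'_d$ and
$$\dim U'_{-2^{2n-3}}+\dim U'_{2^{2n-2}}=|P|-1=\frac{2^{4n-1}-2^{2n-1}}{3}-1.$$

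For the second equation, observe that the matrix of $T$ on $\FF P$ has zero diagonal, because $\alpha\notin\Delta(\alpha)$ in the convention adopted in this section, so $\tr T=0$. On the other hand $S(\FF P)$ is $T$-stable with $T$ acting by $-d$ on $U'_c$ and by $-c$ on $U'_d$, while on the $1$-dimensional quotient $\FF P/S(\FF P)$ the map $T$ acts by the scalar $a=|\Delta(\alpha)|=(2^{4n-3}+2^{2n-2})/3$ (the coefficient-sum of $[\Delta(\alpha)]$). Additivity of trace along $0\subset S(\FF P)\subset \FF P$ then gives
$$a-2^{2n-2}\dim U'_{-2^{2n-3}}+2^{2n-3}\dim U'_{2^{2n-2}}=0.$$

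Finally I would solve the two equations. Their coefficient matrix has determinant $d-c=3\cdot 2^{2n-3}$, again a unit of $\FF$, so the solution is unique, and a routine computation (identical in shape to the orthogonal cases) produces the asserted values $\frac{(2^{2n}-1)(2^{2n-1}+1)}{9}$ and $\frac{(2^{2n}+2)(2^{2n}-4)}{9}$. I do not expect a genuine obstacle: the entire content lies in checking that reduction mod $\ell$ does not degenerate the two graph submodules or the linear system, and the hypothesis $\ell\neq 2,3$---which makes $2^{2n-3}$, $3$, and their product invertible---is precisely what is needed. Since the same two equations already hold over $\QQ$ and there admit these integer solutions, consistency over $\FF$ is automatic.
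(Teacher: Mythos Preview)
Your proposal is correct and follows essentially the same approach as the paper: set up the two linear equations from $\dim U'_c+\dim U'_d=|P|-1$ and the trace of $T$, then solve. Your use of the filtration $0\subset S(\FF P)\subset \FF P$ to compute the trace is in fact slightly cleaner than the paper's phrasing (``eigenvalue $a$ of multiplicity one''), since it avoids having to worry about whether $[P]\in S(\FF P)$ when $\ell\mid|P|$.
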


\begin{proof} As in Proposition~\ref{dimensionO+}, we have
$$\dim U'_{-2^{2n-3}}+\dim U'_{2^{2n-2}}=|P|-1=\frac{2^{4n-1}-2^{2n-1}}{3}-1$$
and
$$2^{2n-3}\dim U'_{2^{2n-2}}-2^{2n-2}\dim
U'_{-2^{2n-3}}=-a=-\frac{2^{4n-3}+2^{2n-2}}{3},$$ which imply the
proposition.
\end{proof}

In the complex case $\ell=0$, as before, we have
$$\FF P=T(\FF P)\oplus U'_{-2^{2n-3}}\oplus U'_{2^{2n-2}} \text{ and } \rho=1+\varphi+\psi,$$
where $\varphi$ and $\psi$ are irreducible complex characters of $G$
afforded by $U'_{-2^{2n-3}}$ and $U'_{2^{2n-2}}$, respectively. Note
that $\varphi(1)=(2^{2n}-1)(2^{2n-1}+1)/9$,
$\psi(1)=(2^{2n}+2)(2^{2n}-4)/9$.

We now study the decompositions of $\overline{\varphi}$ and
$\overline{\psi}$ into irreducible Brauer characters when $\ell=3$.
We note that, when $\ell=3$, $-2^{2n-3}=2^{2n-2}=1$ and therefore
$\FF P$ has only one graph submodule $U'_1$.

\begin{lemma}\label{lemmaUeven1} Suppose $\ell=3$. Then $\overline{\psi}$ has exactly $2$
constituents of degrees $(2^{2n}-1)(2^{2n-1}-2)/9$ and
$(2^{2n}-1)(2^{2n-1}+1)/9-1$ if $3\nmid n$ and $3$ constituents of
degrees $1$, $(2^{2n}-1)(2^{2n-1}-2)/9$, and
$(2^{2n}-1)(2^{2n-1}+1)/9-2$ if $3\mid n$.
\end{lemma}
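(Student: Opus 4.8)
\textbf{Proof proposal for Lemma~\ref{lemmaUeven1}.}
The plan is to mimic exactly the strategy used for $G=O^+_{2n}(2)$ in Lemma~\ref{lemmaO+1}. Since in the complex case $\rho=1+\varphi+\psi$, with $\varphi$ irreducible of degree $(2^{2n}-1)(2^{2n-1}+1)/9$ and $\psi$ irreducible of degree $(2^{2n}+2)(2^{2n}-4)/9$, the reduction $\overline{\varphi}$ will be disposed of first using a known lower bound for the smallest nonlinear cross-characteristic irreducible degree of $U_{2n}(2)$ (from~\cite{Ho}): this bound is $(2^{2n}-1)(2^{2n-1}+1)/9$, which forces $\overline{\varphi}$ to remain irreducible. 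Then only $\overline{\psi}$ needs to be understood, and for that I would invoke Lemma~\ref{main2}: the permutation character $\rho^0$ of $G$ on singular points is rank~$3$, its constituents (degrees $1$, some middle degree, and $(2^{2n}+2)(2^{2n}-4)/9$) are listed in~\cite{L2}, and by Lemma~\ref{main2} the common nontrivial constituent of $\rho$ and $\rho^0$ must be $\psi$ (the middle constituent of $\rho^0$ has a different degree, and $\overline{\varphi}$ being irreducible of the minimal nonlinear degree cannot coincide with the larger one). So $\psi$ is a constituent of $\rho^0$, and the mod-$3$ decomposition of $\psi$ can then be read off from the known $3$-modular structure of $\FF P^0$ for unitary groups, as recorded in~\cite{ST} (the analogue of its~$\S6$ for the unitary case).

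The key steps, in order, are: (1) recall $\rho=1+\varphi+\psi$ and the two degrees; (2) cite~\cite{Ho} for the lower bound $(2^{2n}-1)(2^{2n-1}+1)/9$ on nonlinear irreducible cross-characteristic degrees of $U_{2n}(2)$, concluding $\overline{\varphi}$ is irreducible; (3) recall from~\cite{L2} the constituents of $\rho^0$ and apply Lemma~\ref{main2} to identify $\psi$ as the common constituent, hence a constituent of $\rho^0$; (4) feed $\overline{\psi}$ through the known $3$-modular decomposition of the singular-point permutation module for unitary groups (the relevant statement in~\cite{ST}) to extract the constituents and their degrees; (5) separate the two cases $3\nmid n$ and $3\mid n$, where the presence or absence of an extra trivial constituent and the precise degree shift ($-1$ versus $-2$) depend on whether $3\mid |P|$, equivalently on $\delta_{3,n}$ appearing in the dimension formulas of Table~\ref{tableUeven}. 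The small case $n=2$ (so $G=U_4(2)$) can, if needed, be verified directly from~\cite{Atl2}, exactly as $n=3$ was handled for $O^+_{2n}(2)$.

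The main obstacle I expect is step~(4): extracting the $3$-modular constituents of $\psi$ from the structure of $\FF P^0$ requires that the cross-characteristic (specifically $\ell=3$) decomposition of the singular-point permutation module for $U_{2n}(2)$ be available in the precise form needed — including the exact degrees of the non-trivial composition factors and the multiplicity of the trivial module — and the $3\mid n$ versus $3\nmid n$ dichotomy is precisely the delicate point there, since that is where the module $\FF P^0$ changes shape (a trivial composition factor gets absorbed or split off). Assembling the degree bookkeeping so that $\dim$ of the two (or three) constituents sums correctly to $\psi(1)$, with the $\delta_{3,n}$ correction landing in the right place, is the part that needs care; everything else is a direct transcription of the $O^+_{2n}(2)$ argument.
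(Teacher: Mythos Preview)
Your overall strategy---identify $\psi$ as the common constituent of $\rho$ and $\rho^0$ via Lemma~\ref{main2}, then read off its mod-$3$ decomposition from~\cite{ST}---is exactly what the paper does (it cites Corollary~4.5 of~\cite{ST}, and the singular-point reference is~\cite{L1}, not~\cite{L2}, since this is the unitary case). So steps~(3)--(5) are fine and match the paper.

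The genuine error is in step~(2). Your claim that Hoffman's bound for $U_{2n}(2)$ equals $(2^{2n}-1)(2^{2n-1}+1)/9$, and hence that $\overline{\varphi}$ is irreducible, is false. Unitary groups have Weil characters of degree $(2^{2n}-1)/3$, which is much smaller; indeed the very next lemma in the paper (Lemma~\ref{lemmaUeven2}) proves that $\overline{\varphi}$ is \emph{reducible} with constituents of degrees $(2^{2n}-1)(2^{2n-1}-2)/9$ and $(2^{2n}-1)/3$. So the analogy with $O^+_{2n}(2)$ breaks precisely here: for orthogonal groups the smaller graph-submodule character stays irreducible modulo~$3$, but for unitary groups it does not.

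Fortunately this step is unnecessary for the lemma at hand. The constituents of $\rho^0$ have degrees $1$, $(2^{2n}-1)(2^{2n-1}+4)/3$, and $(2^{2n}+2)(2^{2n}-4)/9$; comparing with $\varphi(1)$ and $\psi(1)$ at the level of \emph{complex} characters already forces the common nontrivial constituent to be $\psi$, with no appeal to modular irreducibility of $\overline{\varphi}$. Drop step~(2) entirely and your argument goes through as the paper's does.
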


\begin{proof} From~\cite{L1}, we know that the complex permutation
character $\rho^0$ of $G$ acting on singular points has $3$
constituents of degrees $1$, $(2^{2n}-1)(2^{2n-1}+4)/3$, and
$(2^{2n}+2)(2^{2n}-4)/9$. Therefore, by Lemma~\ref{main2}, $\psi$
must be a common constituent of $\rho$ and $\rho^0$. Now the lemma
follows from Corollary~4.5 of~\cite{ST}.
\end{proof}

\begin{lemma}\label{lemmaUeven2} Suppose $\ell=3$. Then
\begin{enumerate}
\item[(i)] $\overline{\varphi}$ has exactly $2$ constituents of
degrees $(2^{2n}-1)(2^{2n-1}-2)/9$ and $(2^{2n}-1)/3$.
\item[(ii)] $U'_1$ is simple and $\dim U'_1=(2^{2n}-1)(2^{2n-1}-2)/9$.
\end{enumerate}
\end{lemma}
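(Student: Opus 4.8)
The plan is to parallel the argument already carried out for $O^+_{2n}(2)$ in Lemma~\ref{lemmaO+1}, now decomposing $\overline{\varphi}$ instead of (or in addition to) $\overline{\psi}$. First I would dispose of small cases directly: the groups $U_4(2)$ and $U_6(2)$ can be checked using the Atlas data in~\cite{Atl2}, so assume $n$ is large enough (say $n\geq 4$). Since $\ell=3$ forces $-2^{2n-3}=2^{2n-2}=1$ in $\FF$, the two graph submodules coincide and $U'_1$ is the unique graph submodule of $S(\FF P)$; by Proposition~\ref{mainUeven} it is contained in every nontrivial submodule, so every composition factor of $\FF P$ other than possibly the trivial one lies above $U'_1$. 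The key numerical input is the degree bound of Hoyden-Siedersleben/Hiss--Malle type (as cited from~\cite{Ho} in the orthogonal case, with the unitary analogue): the smallest degree of a nontrivial irreducible cross-characteristic character of $U_{2n}(2)$ (or $SU_{2n}(2)$) is $(2^{2n}-1)(2^{2n-1}-2)/9$, which is exactly the claimed $\dim U'_1$.

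The main steps are as follows. From Lemma~\ref{lemmaUeven1} we know the constituents of $\overline{\psi}$; combined with $\overline{\rho}=1+\overline{\varphi}+\overline{\psi}$ and the fact (Lemma~\ref{main2}, applied as in the proof of Lemma~\ref{lemmaUeven1}) that $\psi$ is the common constituent with $\rho^0$, I would extract the constituents of $\overline{\varphi}$. The argument: let $\sigma\in\IBR_3(G)$ be the Brauer character afforded by $U'_1$. Because $U'_1$ sits inside $S(\FF P)/T(\FF P)$ and, as in case~(iv) of the $O^+$ proof, self-duality of the relevant sections forces $U'_1$ (or its non-trivial head) to occur with multiplicity at least $2$ in $\overline{\rho}$, while $\overline{\psi}$ can contribute $\sigma$ at most once (inspect the degrees in Lemma~\ref{lemmaUeven1}: $(2^{2n}-1)(2^{2n-1}-2)/9$ appears once there). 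Hence $\sigma$ is a constituent of $\overline{\varphi}$. Now $\varphi(1)=(2^{2n}-1)(2^{2n-1}+1)/9$, so the remaining constituents of $\overline{\varphi}$ have total degree $(2^{2n}-1)(2^{2n-1}+1)/9-\sigma(1)$. If $\sigma(1)=(2^{2n}-1)(2^{2n-1}-2)/9$ this difference is $(2^{2n}-1)/3$, forcing exactly one further constituent, of that degree (a nontrivial irreducible of $U_{2n}(2)$ of degree $(2^{2n}-1)/3$, the minimal Weil-type character). If instead $\sigma(1)$ were the larger value $(2^{2n}-1)(2^{2n-1}+1)/9-1$ from Lemma~\ref{lemmaUeven1}, the leftover degree is $1$, so $\overline\varphi$ would be $\sigma$ plus a linear character; I would rule this out by the degree bound (any nontrivial constituent must have degree at least $(2^{2n}-1)(2^{2n-1}-2)/9$, which exceeds $(2^{2n}-1)(2^{2n-1}+1)/9-1$ only when... ) — more robustly, by a restriction-to-$G_1$ count of linear constituents exactly as in case~(iii) of the $O^-$ proof, since the multiplicity of the trivial (and of $\omega_1$, but $U_{2n}(2)$ has no $\omega$) in $\overline\varphi|_{G_1}$ is forced to be small. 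This pins down $\sigma(1)=(2^{2n}-1)(2^{2n-1}-2)/9$ and the second constituent of degree $(2^{2n}-1)/3$, proving~(i). For~(ii): since the two constituents of $\overline{\varphi}$ have distinct degrees and $\varphi$ is irreducible, any proper submodule of $U'_1$ would afford a subsum of these, but $U'_1$ is a graph submodule generated by the differences $v_{1,\alpha}-v_{1,\beta}$ and lies in $S(\FF P)$; if it were not simple its (unique, by Proposition~\ref{mainUeven}) minimal submodule would be $T(\FF P)$, contradicting $U'_1\subseteq S(\FF P)$ together with $T(\FF P)\subseteq S(\FF P)$ only when $3\mid[P]$. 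So I would instead argue that $[P]\notin U'_1$ precisely when $|S|\neq 0$ for a suitable coset-representative set $S$ (as in case~(iii) of the $O^+$ proof), giving $U_1=T(\FF P)\oplus U'_1$ with $U'_1$ simple; then $\sigma$ is irreducible and $\dim U'_1=\sigma(1)=(2^{2n}-1)(2^{2n-1}-2)/9$.

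The hard part will be the clean separation in the second step: showing $\sigma$ lands in $\overline{\varphi}$ rather than being split between $\overline\varphi$ and $\overline\psi$, and simultaneously excluding the "large $\sigma(1)$" alternative coming from Lemma~\ref{lemmaUeven1}. In the orthogonal cases this was done via the restriction to $G_1=O^\pm_{2n-2}(2)$ and a careful count of linear (and $\omega_1$-) constituents; here the analogue is $G_1=U_{2n-2}(2)$ (stabilizer of a nonsingular point), and I would need the unitary versions of Lemmas~\ref{lemmaO-1}--\ref{lemmaO-2}, i.e. a decomposition of $\FF P$ and $\FF P^0$ as $\FF G_1$-modules via a geometric partition of the point sets analogous to~(\ref{P}). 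Establishing that decomposition and feeding in the known structure of the lower-rank permutation modules (Table~\ref{tableUeven} inductively, plus the singular-point structure from~\cite{ST}) is the real work; once it is in place, the degree counting is routine. An alternative, possibly lighter, route is to avoid restriction entirely and use only the degree bound from~\cite{Ho} (unitary case) together with Lemma~\ref{lemmaUeven1}: the value $(2^{2n}-1)(2^{2n-1}+1)/9-1$ is \emph{below} the minimal nontrivial degree $(2^{2n}-1)(2^{2n-1}-2)/9$ exactly when $3(2^{2n-1}+1)-9 < 3(2^{2n-1}-2)$, i.e. never — so that crude bound alone does not suffice, and the $G_1$-restriction argument (or a direct Atlas-style check that $\overline\psi$'s large constituent is genuinely distinct from $\sigma$) seems unavoidable. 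I would carry out the $G_1$-restriction version for safety, mirroring $\S\ref{sectionO-}$.
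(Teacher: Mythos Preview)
Your overall architecture is sound --- the simplicity of $U'_1$ via an explicit coset set $S$ with $\sum_{\alpha\in S}v_{1,\alpha}=[P]$ and $|S|\not\equiv 0$, and the multiplicity-$\geq 2$ argument via self-duality, match the paper exactly. But the route you propose for pinning down $\sigma(1)$ and the two constituents of $\overline{\varphi}$ is genuinely different from what the paper does, and your version is both harder and contains a factual slip.

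The slip: the smallest nontrivial cross-characteristic degree for $U_{2n}(2)$ is \emph{not} $(2^{2n}-1)(2^{2n-1}-2)/9$; it is the Weil degree $(2^{2n}-1)/3$, which is much smaller. So the Hoffman-style bound you invoke gives no traction here at all (for $O^+$ it worked because there are no small Weil characters).

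More importantly, the paper does \emph{not} restrict to $G_1=U_{2n-2}(2)$. Instead it restricts to the parabolic $Q=\Stab_G(E)$, the stabilizer of a maximal totally isotropic subspace $E=\langle e_1,\dots,e_n\rangle$. The key observation is that for each line $\langle f\rangle\subset F$, the $Q$-action on the $2^{2n-1}$ nonsingular points $\langle f+u\rangle$ ($u\in E$) is permutation-isomorphic to the $Q$-action on the $2^{2n-1}$ singular points of that shape, yielding the clean $\CC Q$-isomorphism $\CC P^0\cong \CC P\oplus \CC P(E)$ and hence $\rho^0|_Q=\rho|_Q+1_Q+\tau$. Now the decomposition $\rho^0|_Q=2(1_Q+\tau)+\zeta+\sigma_0+\sigma_1$ is already available from Proposition~4.2 of~\cite{ST}, and degree comparison forces $\varphi|_Q=\zeta+\sigma_0$ with $\zeta(1)=(2^{2n}-1)/3$, $\sigma_0(1)=(2^{2n}-1)(2^{2n-1}-2)/9$. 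Proposition~4.4 of~\cite{ST} says $\overline{\zeta}$ and $\overline{\sigma_0}$ remain irreducible at $\ell=3$, so $\overline{\varphi}$ has at most two constituents; irreducibility is then excluded by the multiplicity-$2$ fact you already have, and you are done.

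Your $U_{2n-2}(2)$-restriction plan could in principle be pushed through, but it would require building unitary analogues of Lemmas~\ref{lemmaO-1}--\ref{lemmaO-2} from scratch (the geometric partition of $P$ over a hyperbolic pair is messier than over the anisotropic line in the $O^-$ case), whereas the parabolic route recycles results already proved in~\cite{ST} and finishes in a few lines.
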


\begin{proof}
Let $S$ be the set of nonsingular points of the form $\langle
e_1+\tau f_1+v\rangle$ where $v\in \langle e_2,e_3,...,e_n\rangle$.
It is easy to see that
$$\sum_{\alpha\in S}v_{1,\alpha}=\sum_{\alpha\in S}(\alpha+[\Delta(\alpha)])=[P].$$
Therefore, $T(\FF P)\subset U_1$. Moreover, $|S|=4^{n-1}\neq 0$,
whence $T(\FF P)\nsubseteq U'_1$. By Proposition~\ref{mainUeven},
$U_1=U'_1\oplus T(\FF P)$ and $U'_1$ is simple, proving the first
part of (ii). Since $U_1$ as well as $U'_1$ are self-dual, $\FF
P/U_1^{'\perp}\cong \Hom_\FF(U'_1,\FF)\cong U'_1$. Using the fact
$U'_1\subseteq U_1^{'\perp}$ from Lemma~\ref{liebeck}, we have that
$U'_1$ occurs at least twice as a composition factor of $\FF P$.

Denote $E:=\langle e_1,...,e_n\rangle$ and $F:=\langle
f_1,...,f_n\rangle$. Let $Q:=\Stab_G(E)$, the stabilizer of $E$ in
$G$. For any point $\langle f\rangle$ in $F$, the action of $Q$ on
the set of $2^{2n-1}$ nonsingular points of the form $\langle
f+u\rangle$ with $u\in E$ is equivalent to that on the set of
$2^{2n-1}$ singular points of the form $\langle f+u\rangle$ with
$u\in E$. Therefore, if $P(E)$ denotes the set of (singular) points
in $E$, we have an $\FF Q$-isomorphism:
$$\CC P^0\cong \CC P\oplus \CC P(E).$$
Notice that $Q$ acts doubly transitive on $P(E)$ and its character
is $1_{Q}+\tau$, where $\tau$ is irreducible of degree
$(2^{2n}-1)/3-1$. Therefore, the above isomorphism gives:
$$\rho^0|_Q=\rho|_Q+1_Q+\tau.$$
Using Proposition~4.2 of \cite{ST}, we have the decomposition of
$\rho^0|_Q$ into irreducible constituents:
$\rho^0|_Q=2(1+\tau)+\zeta+\sigma_0+\sigma_1$ where
$\zeta(1)=(2^{2n}-1)/3$, $\sigma_0(1)=(2^{2n}-1)(2^{2n-1}-2)/9$, and
$\sigma_1(1)=2(2^{2n}-1)(2^{2n-1}-2)/9$. It follows that
$$\rho|_Q=1_Q+\tau +\zeta +\sigma_0+\sigma_1.$$
By comparing degrees, we have
$$\varphi|_Q=\zeta+\sigma_0 \text{ and } \psi|_Q=\tau+\sigma_1.$$
By Proposition~4.4 of \cite{ST}, we know that $\overline{\zeta}$ and
$\overline{\sigma_0}$ are irreducible when $\ell=3$. It follows that
$\overline{\varphi}$ has at most two constituents. Suppose
$\overline{\varphi}$ is irreducible. Combining this with
Lemma~\ref{lemmaUeven1}, we see that $\overline{\rho}$ has no
constituent with multiplicity $\geq 2$, which contradicts the fact
that $U'_1$ occurs as a composition factor of $\FF P$ at least
twice. Therefore, $\overline{\varphi}$ has exactly two constituents,
and their degrees are degrees $\zeta(1)=(2^{2n}-1)/3$ and
$\sigma_0(1)=(2^{2n}-1)(2^{2n-1}-2)/9$. This shows that $\dim
U'_1=(2^{2n}-1)(2^{2n-1}-2)/9$ and the lemma is proved.
\end{proof}

\textbf{Proof of Theorem \ref{theorem} when $G=U_{2n}(2)$}. As
described in Table \ref{tableUeven}, we consider the following
cases.

\medskip

(i) $\ell\neq2,3; \ell\nmid (2^{2n}-1)$: Then we have
$|P|=(2^{4n-1}-2^{2n-1})/3\neq 0$ and hence $S(\FF P)\cap T(\FF
P)=\{0\}$. We then obtain $\FF P=T(\FF P)\oplus U'_{-2^{2n-3}}\oplus
U'_{2^{2n-2}}.$ Propositions~\ref{mainUeven}
and~\ref{dimensionUeven} now imply the theorem in this case. In
Table \ref{tableUeven}, $X:=U'_{-2^{2n-3}}$ and $Y:=U'_{2^{2n-2}}$.

\medskip

(ii) $\ell\neq2,3; \ell\mid (2^{2n}-1)$: Then $T(\FF P)\subset S(\FF
P)$. As before, we have $T(\FF P)\cap U'_{-2^{2n-3}}=\{0\}$ and
therefore, by Proposition~\ref{mainUeven}, $U'_{-2^{2n-3}}$ is
simple. We also have $\FF P=U'_{-2^{2n-3}}\oplus U_{2^{2n-2}}.$
Moreover, $U_{2^{2n-2}}$ is uniserial with composition series
$0\subset T(\FF P) \subset U'_{2^{2n-2}}\subset U_{2^{2n-2}}$ and
socle series $\FF-Y-\FF$, where $Y:=U'_{2^{2n-2}}/T(\FF P)$. We note
that, in Table~\ref{tableUeven}, $X:=U'_{-2^{2n-3}}$.

\medskip

(iii) $\ell=3; 3\mid n$: Then $s=(2^{4n-5}+2^{2n-2})/3=0$.
Therefore, by Lemma~\ref{liebeck}, $\langle U_1,U_1\rangle=0$,
whence $U_1\subseteq U_1^\perp$. From the proof of
Lemma~\ref{lemmaUeven2}, we know that $U_1=T(\FF P)\oplus U'_1$.
Hence, by Lemmas~\ref{lemmaUeven1} and~\ref{lemmaUeven2},
$U_1^\perp/U_1$ has two composition factors of degrees
$(2^{2n}-1)/3$ and $(2^{2n}-1)(2^{2n-1}+1)/9-2$, which we denote by
$W_1$ and $W_2$, respectively. The self-duality of $U_1^\perp/U_1$
implies that $U_1^\perp/U_1\cong W_1\oplus W_2$. Put $Z:=U'_1$. By
Proposition~\ref{mainUeven}, $S(\FF P)/T(\FF P)$ is uniserial with
socle series $Z-(W_1\oplus W_2)-Z$.

Next, the self-dual module $U_1^{'\perp}/U'_1$ has composition
factors: $\FF$ (twice), $W_1$, and $W_2$. Inspecting the structure
of $\FF P^0$ from Figure~1 of~\cite{ST}, we see that any nontrivial
quotient of $\FF P^0$ has the uniserial module $W_2-\FF$ as a
quotient. It follows by Lemma~\ref{main1} that $W_2$ is not a
submodule of $U_1^{'\perp}/U'_1$. If $W_1$ is not a submodule of
$U_1^{'\perp}/U'_1$ neither, the socle series of the self-dual
module $U_1^{'\perp}/U'_1$ would be $\FF-(W_1\oplus W_2)-\FF$ and
hence $\FF P$ does not have any nontrivial submodule isomorphic to a
quotient of $\FF P^0$, a contradiction by Lemma~\ref{main1}. So
$W_1$ must be a submodule of $U_1^{'\perp}/U'_1$ and therefore
$U_1^{'\perp}/U'_1$ is direct sum of $W_1$ and a uniserial module
$\FF-W_2-\FF$. The structure of $\FF P$ now is determined as
described.

\medskip

(iv) $\ell=3, 3\nmid n$: Then $|P|=(2^{4n-1}-2^{2n-1})/3\neq0$.
Therefore, $\FF P=T(\FF P)\oplus S(\FF P)$. Again by
Lemma~\ref{liebeck}, $\langle U'_1,U_1\rangle=0$. Consider a series
of $S(\FF P)$:
$$0\subset U'_1\subseteq U_1^\perp\subseteq S(\FF P).$$
We have $S(\FF P)/U_1^\perp\cong\FF P/U_1^{'\perp}\cong U'_1$.
Therefore, by Lemmas~\ref{lemmaUeven1} and~\ref{lemmaUeven2},
$U_1^\perp/U'_1$ has two composition factors of degrees
$(2^{2n}-1)/3$ and $(2^{2n}-1)(2^{2n-1}+1)/9-1$, which again we
denote by $W_1$ and $W_2$, respectively. It is easy to see that
$U_1^\perp/U'_1$ is self-dual from the self-duality of
$U_1^{'\perp}/U'_1$. Therefore $U_1^\perp/U'_1\cong W_1\oplus W_2$.
Putting $Z:=U'_1$, the socle series of $S(\FF P)$ is $Z-(W_1\oplus
W_2)-Z$.\hfill$\Box$


\section{The unitary groups in odd dimensions $U_{2n+1}(2)$}

Let $V$ be a vector space of dimension $2n+1\geq5$ over the field of
$4$ elements $\mathbb{F}_4=\{0,1,\tau,\tau^2\}$, where $\tau$ is a
primitive cubic root of unity. Let $(.,.)$ be a nonsingular
conjugate-symmetric sesquilinear form on $V$. Then $G=U_{2n+1}(2)$
is the unitary group of linear transformations of $V$ preserving
$(.,.)$.

We choose a basic of $V$ consisting of vectors
$\{e_1,...,e_n,f_1,...,f_n,g\}$ so that
$(e_i,e_j)=(f_i,f_j)=(e_i,g)=(f_i,g)=0$, $(e_i,f_j)=\delta_{ij}$,
and $(g,g)=1$ for all $i,j=1,...,n$. Let $P$ be the set of
nonsingular points in $V$. Then $P=\{\langle
cg+\sum_1^n(a_ie_i+b_if_i)\rangle\mid
c\overline{c}+\sum_1^n(a_i\overline{b_i}+\overline{a_i}b_i)\neq 0\}$
and $|P|=(2^{4n+1}+2^{2n})/3$. We assume that $\Delta(\alpha)\subset
P\setminus \{\alpha\}$ consists of elements orthogonal to $\alpha$
and $\Phi(\alpha)\subset P\setminus \{\alpha\}$ consists of elements
not orthogonal to $\alpha$. Then we have $a=(2^{4n-1}-2^{2n-1})/3,
b=2^{4n-1}+2^{2n-1}-1, r=(2^{4n-3}+2^{2n-2})/3,
s=(2^{4n-3}-2^{2n-1})/3$. Equation~(\ref{quadratic}) now becomes
$x^2+2^{2n-2}x-2^{4n-3}=0$ and it has two roots $2^{2n-2}$ and
$-2^{2n-1}$.

\begin{proposition}\label{mainUodd} Suppose that the characteristic of $\mathbb{F}$ is
odd. Then every nonzero $\FF G$-submodule of $\FF P$ either is
$T(\FF P)$ or contains a graph submodule, which is $U'_{2^{2n-2}}$
or $U'_{-2^{2n-1}}$.
\end{proposition}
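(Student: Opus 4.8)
The plan is to transport the proof of Proposition~\ref{mainUeven} essentially verbatim, changing only the ambient dimension and the resulting numerology. I keep the same four nonsingular points $\phi_1=\langle e_2+\tau f_2\rangle$, $\phi_2=\langle e_1+e_2+\tau f_2\rangle$, $\phi_3=\langle\tau^2e_1+e_2+\tau f_2\rangle$, $\phi_4=\langle\tau e_1+e_2+\tau f_2\rangle$, and define $\Delta=\{\langle cg+\sum_1^n(a_ie_i+b_if_i)\rangle\in P\mid b_1=1\}$ and $\Phi=\{\langle cg+\sum_1^n(a_ie_i+b_if_i)\rangle\in P\mid b_1=0\}$, with $\Delta$ split into $\Delta_1,\Delta_2,\Delta_3,\Delta_4$ according as $a_2\tau^2+b_2=0,1,\tau,\tau^2$. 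Put $V_1=\langle e_2,f_2,\dots,e_n,f_n,g\rangle$, let $P_1$ be its set of nonsingular points, and for $\langle w\rangle\in P_1$ set $B_{\langle w\rangle}=\{\langle w\rangle,\langle e_1+w\rangle,\langle\tau e_1+w\rangle,\langle\tau^2 e_1+w\rangle\}$. Then $P=\Delta\cup\Phi$, $\Phi=\bigcup_{\langle w\rangle\in P_1}B_{\langle w\rangle}$, and $[\Delta(\phi_i)]-[\Delta(\phi_j)]=[\Delta_i]-[\Delta_j]$, exactly as in the previous section.

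Next I take $H<G$ to be the group of unitary transformations sending $\{e_1,f_1,e_2,f_2,\dots,e_n,f_n,g\}$ to $\{e_1,f_1+\sum_{i=1}^na_ie_i+\sum_{i=2}^nb_if_i+cg,e_2-\overline{b_2}e_1,f_2-\overline{a_2}e_1,\dots,e_n-\overline{b_n}e_1,f_n-\overline{a_n}e_1,g-\overline{c}e_1\}$ respectively, where $a_i,b_i,c\in\FF_4$ and $a_1+\overline{a_1}=\sum_{i=2}^n(\overline{a_i}b_i+a_i\overline{b_i})+c\overline{c}$, together with $K=\Stab_H(\phi_1)$. The one arithmetic change from the even-dimensional case is that the extra coordinate $c$ enlarges the orders: $|H|=2^{4n-1}$, $|K|=2^{4n-3}$, $|\Delta|=2^{4n-1}$, $|\Delta_j|=2^{4n-3}$. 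With the same elementary verifications as before, one checks that $H$ acts regularly on $\Delta$ while $K$ has the four orbits $\Delta_1,\dots,\Delta_4$ there; that $K$ fixes $B_{\phi_1}$ pointwise and is transitive on every other $B_{\langle w\rangle}$; and that $H$ is transitive on each $B_{\langle w\rangle}$.

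Granting this, the argument of Proposition~\ref{mainUeven} now goes through word for word. Given a submodule $U\neq T(\FF P)$, one first produces, as in the proof of Proposition~\ref{mainO+}, an element $u=a\phi_1+b\phi_2+c\phi_3+d\phi_4+\sum a_\delta\delta\in U$ with $a\neq b$; subtracting $ug$ for a $g\in G$ inducing $\phi_1\leftrightarrow\phi_2$ and $\phi_3\leftrightarrow\phi_4$ puts us inside $S(\FF P)$; averaging over $K$, then over $H$, and taking differences then removes the $\Phi$-part and collapses the $\Delta$-part to a combination of $[\Delta_1],\dots,[\Delta_4]$, leaving
$$u_5=(\phi_1-\phi_2)+e(\phi_3-\phi_4)+f[\Delta_1]+f'[\Delta_2]+f''[\Delta_3]+f'''[\Delta_4]\in U\cap S(\FF P)$$
with $e\neq-1$ after possibly swapping $\phi_3$ and $\phi_4$; applying an element of $H$ that induces $(\Delta_1\Delta_3)(\Delta_2\Delta_4)$ gives $u_6$, and $u_7=(u_5+u_6)/(1+e)$ has the shape $(\phi_1-\phi_2+\phi_3-\phi_4)+(t+t'')([\Delta_1]-[\Delta_2]+[\Delta_3]-[\Delta_4])$. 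If $t+t''\neq0$, rescaling and applying $g\in G$ with $\phi_1\mapsto\phi_2\mapsto\phi_3\mapsto\phi_1$ and $\phi_4$ fixed yields $2(v_{c,\phi_1}-v_{c,\phi_4})\in U$ with $c=1/(t+t'')$; since $\Char\FF$ is odd, $U'_c\subseteq U$, and Lemma~\ref{liebeck} forces $U'_c$ to be $U'_{2^{2n-2}}$, $U'_{-2^{2n-1}}$, or all of $S(\FF P)$, so $U$ contains a graph submodule. If $t+t''=0$, the same $3$-cycle gives $2(\phi_1-\phi_4)\in U$, whence $S(\FF P)\subseteq U$ and $U$ contains both graph submodules.

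The one point needing genuine care is to confirm that each auxiliary element of $G$ invoked above — the transpositions and $3$-cycle on the $\phi_i$, and the element of $H$ permuting the $\Delta_j$ — really exists in $U_{2n+1}(2)$ and acts on the $\Delta_j$ by the stated index permutation. Each is produced by exactly the same explicit recipe as in the previous section, since that recipe only constrains the action on $e_1,e_2,f_2$ and leaves the action on the remaining basis vectors at our disposal, and the induced permutation of the $\Delta_j$ is read off from the effect on the coordinate $a_2\tau^2+b_2$ by the identical computation. Everything else is the routine observation that in odd characteristic the scalars $2$ and $1+e$, and the relevant powers of $2$, are invertible.
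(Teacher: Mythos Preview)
Your proof is correct and takes essentially the same approach as the paper's own proof, which simply says to define $\phi_1,\dots,\phi_4$, $\Delta_1,\dots,\Delta_4$, $\Delta$, $\Phi$, $P_1$, $V_1$, $B_{\langle w\rangle}$ as in Proposition~\ref{mainUeven}, constructs the identical subgroup $H$ (with the extra parameter $c$ attached to $g$ and the adjusted constraint $a_1+\overline{a_1}+c\overline{c}=\sum_{i\geq2}(\overline{a_i}b_i+a_i\overline{b_i})$), takes $K=\Stab_H(\phi_1)$, and then invokes the argument of Proposition~\ref{mainUeven} verbatim. Your write-up is in fact more detailed than the paper's, correctly recording the shifted orders $|H|=|\Delta|=2^{4n-1}$ and $|K|=|\Delta_j|=2^{4n-3}$ and checking that the auxiliary elements still live in $U_{2n+1}(2)$.
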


\begin{proof} Define $\phi_1,\phi_2,\phi_3,\phi_4$, $\Delta_1,\Delta_2,\Delta_3,\Delta_4,\Delta,
\Phi, P_1, V_1$, and $B_{\langle w\rangle}$ in a manner similar to
that of the proof of Proposition \ref{mainUeven}. Consider a
subgroup $H<G$ consisting of unitary transformations sending
elements of the basis $\{e_1$, $f_1$, $e_2$, $f_2,...,e_n,f_n,g\}$
to those of the basis
$\{e_1,f_1+\sum_{i=1}^na_ie_i+\sum_{i=2}^nb_if_i+cg,e_2-\overline{b_2}e_1,f_2-
\overline{a_2}e_1,...,e_n-\overline{b_n}e_1,f_n-\overline{a_n}e_1,g-\overline{c}e_1\}$
respectively, where $a_i,b_i,c\in\FF_4$ and
$a_1+\overline{a_1}+c\overline{c}=\sum_{i=2}^n(\overline{a_i}b_i+a_i\overline{b_i})$.
Let $K$ be the subgroup of $H$ consisting of transformations fixing
$\phi_1$. Now we argue as in the proof of Proposition
\ref{mainUeven}.
\end{proof}

\begin{proposition}\label{dimensionUodd} If $\ell=\Char(\FF)\neq 2,3$, then
$$\dim U'_{2^{2n-2}}=\frac{(2^{2n+1}+1)(2^{2n}-1)}{9} \text{ and } \dim U'_{-2^{2n-1}}=\frac{(2^{2n+1}-2)(2^{2n+1}+4)}{9}.$$
\end{proposition}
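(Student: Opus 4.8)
The plan is to mimic the proofs of Propositions~\ref{dimensionO+}, \ref{dimensionO-}, and~\ref{dimensionUeven}: produce two independent linear equations in $\dim U'_{2^{2n-2}}$ and $\dim U'_{-2^{2n-1}}$ and solve. The first comes from a direct-sum decomposition of $S(\FF P)$, the second from computing the trace of the homomorphism $T$ of~(\ref{T}) in two ways.

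For the first equation I would note that for every $\alpha\in P$ one has $v_{2^{2n-2},\alpha}-v_{-2^{2n-1},\alpha}=(2^{2n-2}+2^{2n-1})\alpha=3\cdot 2^{2n-2}\alpha$, and since $\ell\neq 2,3$ this scalar is a unit in $\FF$. Subtracting the analogous identity for $\beta$ gives $\alpha-\beta\in U'_{2^{2n-2}}+U'_{-2^{2n-1}}$ for all $\alpha,\beta\in P$, so $U'_{2^{2n-2}}+U'_{-2^{2n-1}}=S(\FF P)$. By the remarks following Lemma~\ref{liebeck}, $T$ acts on $U'_{2^{2n-2}}$ as the scalar $2^{2n-1}$ and on $U'_{-2^{2n-1}}$ as $-2^{2n-2}$; these scalars are distinct because $\ell\neq 2,3$, so $U'_{2^{2n-2}}\cap U'_{-2^{2n-1}}=0$ and hence $U'_{2^{2n-2}}\oplus U'_{-2^{2n-1}}=S(\FF P)$. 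This yields
$$\dim U'_{2^{2n-2}}+\dim U'_{-2^{2n-1}}=|P|-1=\frac{2^{4n+1}+2^{2n}}{3}-1.$$

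For the second equation I would use the $T$-stable filtration $0\subset U'_{2^{2n-2}}\subset S(\FF P)\subset\FF P$, on whose successive quotients $T$ acts as $2^{2n-1}$, as $-2^{2n-2}$ (since $S(\FF P)/U'_{2^{2n-2}}\cong U'_{-2^{2n-1}}$), and as multiplication by $a=|\Delta(\alpha)|=(2^{4n-1}-2^{2n-1})/3$ on the one-dimensional quotient $\FF P/S(\FF P)$. Since $\alpha\notin\Delta(\alpha)$, the matrix of $T$ in the basis $P$ has zero diagonal, so $\Tr(T)=0$, which gives
$$2^{2n-1}\dim U'_{2^{2n-2}}-2^{2n-2}\dim U'_{-2^{2n-1}}+\frac{2^{4n-1}-2^{2n-1}}{3}=0.$$
Solving the two displayed equations yields $\dim U'_{2^{2n-2}}=(2^{2n+1}+1)(2^{2n}-1)/9$ and $\dim U'_{-2^{2n-1}}=(2^{2n+1}-2)(2^{2n+1}+4)/9$, as claimed. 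There is no genuine obstacle here: the argument is the linear-algebra analogue of the three earlier cases, and the only thing demanding attention is keeping the two roots $2^{2n-2},-2^{2n-1}$, the associated $T$-scalars $2^{2n-1},-2^{2n-2}$, and the parameter $a$ correctly matched so that the signs in the trace identity come out right.
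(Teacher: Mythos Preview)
Your proposal is correct and follows exactly the paper's approach: the paper's proof simply records the two equations
\[
\dim U'_{2^{2n-2}}+\dim U'_{-2^{2n-1}}=|P|-1=\frac{2^{4n+1}+2^{2n}}{3}-1
\quad\text{and}\quad
2^{2n-1}\dim U'_{2^{2n-2}}-2^{2n-2}\dim U'_{-2^{2n-1}}=-a=-\frac{2^{4n-1}-2^{2n-1}}{3},
\]
citing the argument of Proposition~\ref{dimensionO+}, and solves. Your use of a $T$-stable filtration rather than an eigenspace decomposition for the trace identity is a cosmetic variation of the same idea.
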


\begin{proof} As in Proposition~\ref{dimensionO+}, we have
$$\dim U'_{2^{2n-2}}+\dim U'_{-2^{2n-1}}=|P|-1=\frac{2^{4n+1}+2^{2n}}{3}-1$$
and
$$2^{2n-1}\dim U'_{2^{2n-2}}-2^{2n-2}\dim
U'_{-2^{2n-1}}=-a=-\frac{2^{4n-1}-2^{2n-1}}{3},$$ which imply the
proposition.
\end{proof}

In the complex case $\ell=0$, we have
$$\FF P=T(\FF P)\oplus U'_{2^{2n-2}}\oplus U'_{-2^{2n-1}} \text{ and } \rho=1+\varphi+\psi,$$
where $\varphi$ and $\psi$ are irreducible complex characters of $G$
afforded by $U'_{2^{2n-2}}$ and $U'_{-2^{2n-1}}$, respectively. Then
we have $\varphi(1)=(2^{2n+1}+1)(2^{2n}-1)/9$,
$\psi(1)=(2^{2n+1}-2)(2^{2n+1}+4)/9$.

\begin{lemma}\label{lemmaUodd1} Suppose $\ell=3$. Then
\begin{enumerate}
\item[(i)] when $3\nmid n$, $\overline{\psi}$ has exactly $1$ trivial constituent,
$2$ equal constituents of degree $(2^{2n+1}-2)/3$, $1$ constituent
of degree $(2^{2n+1}+1)(2^{2n}-4)/9$, and $1$ constituent of degree
$(2^{2n+1}+1)(2^{2n}-1)/9$;
\item[(ii)] when $3\mid n$, $\overline{\psi}$ has exactly $2$ trivial
constituent, $2$ equal constituents of degree $(2^{2n+1}-2)/3$, $1$
constituent of degree $(2^{2n+1}+1)(2^{2n}-4)/9-1$, and $1$
constituent of degree $(2^{2n+1}+1)(2^{2n}-1)/9$.
\end{enumerate}
\end{lemma}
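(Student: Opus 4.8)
The plan is to follow verbatim the route used for the analogous statements in the orthogonal and even unitary cases (Lemmas~\ref{lemmaO+1}, \ref{lemmaO-3}, \ref{lemmaUeven1}): first place $\psi$ inside the permutation character on singular points, and then read its reduction modulo~$3$ off the structure of that module as determined in \cite{ST}.

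First I would recall from \cite{L1} the decomposition of the complex permutation character $\rho^0$ of $G=U_{2n+1}(2)$ acting on the set $P^0$ of singular points. Since this action is rank~$3$, one has $\rho^0 = 1 + \chi^0 + \psi^0$ with $\chi^0,\psi^0$ irreducible; a direct computation gives $\chi^0(1) = (2^{2n+1}+1)(2^{2n}-4)/9$ and $\psi^0(1) = (2^{2n+1}-2)(2^{2n+1}+4)/9 = \psi(1)$, whereas $\varphi(1) = (2^{2n+1}+1)(2^{2n}-1)/9$ agrees with none of the three degrees $1$, $\chi^0(1)$, $\psi^0(1)$. By Lemma~\ref{main2} the characters $\rho$ and $\rho^0$ possess a common nontrivial constituent, and the degree comparison just made forces that constituent to be $\psi$, identified with $\psi^0$.

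Having placed $\psi$ as a constituent of $\rho^0$, I would then invoke the description in \cite{ST} of the $\FF G$-module $\FF P^0$ for $U_{2n+1}(2)$ acting on singular points; from it the $3$-modular composition factors of each irreducible constituent of $\rho^0$ — in particular those of $\psi^0$ — are known. Translating the reduction of $\psi^0$ modulo~$3$ into the present notation yields the two cases of the statement. The dividing line $3\mid n$ versus $3\nmid n$ is exactly the arithmetic condition $9 \mid 2^{2n}-1$, which governs whether one trivial constituent splits off; this is the same $\delta_{3,n}$ correction visible in $\dim W$ in Table~\ref{tableUodd}. Finally a routine check confirms that the constituent degrees recorded in (i) and (ii) — namely $1$, $(2^{2n+1}-2)/3$ with multiplicity~$2$, $(2^{2n+1}+1)(2^{2n}-4)/9$ or this minus~$1$, and $(2^{2n+1}+1)(2^{2n}-1)/9$ — sum to $\psi(1)$.

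I expect the only genuine difficulty here to be organizational: one must ensure that the degree comparison through Lemma~\ref{main2} pins $\psi$ down unambiguously rather than merely up to a character of equal degree, and one must match the slightly different parametrization of the singular-point module used in \cite{ST} against the five or six constituents appearing above. If a sanity check on the case analysis is desired, the smallest instance $n=2$, i.e.\ $G=U_5(2)$, can be verified directly. No representation-theoretic input beyond \cite{L1} and \cite{ST}, both already used above, is required.
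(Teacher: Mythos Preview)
Your proposal is correct and follows essentially the same route as the paper: identify $\psi$ as the common nontrivial constituent of $\rho$ and $\rho^0$ via Lemma~\ref{main2} and a degree comparison, and then read off the $3$-modular decomposition of $\psi$ from the known structure of $\FF P^0$ in~\cite{ST} (the paper cites Corollary~5.6 there specifically). Your explicit check that $\varphi(1)$ matches none of the $\rho^0$-degrees, and the degree-sum sanity check, are mild elaborations on the paper's terser argument; note also that the singular-point decomposition for unitary groups is in~\cite{L1} as you write, rather than~\cite{L2}.
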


\begin{proof} From~\cite{L2}, we know that the complex permutation
character $\rho^0$ of $G$ acting on singular points has $3$
constituents of degrees $1$, $(2^{2n+1}+1)(2^{2n}-4)/3$, and
$(2^{2n+1}-2)(2^{2n+1}+4)/9$. Therefore, by Lemma~\ref{main2},
$\psi$ must be a common constituent of $\rho$ and $\rho^0$. Now the
lemma follows from Corollary~5.6 of~\cite{ST}.
\end{proof}

We will show that $\overline{\varphi}$ is irreducible in any case.
In order to do that, we need to recall some results about \emph{Weil
characters} of unitary groups. The Weil representation of $SU_m(q)$
with $m\geq 3$ and $q$ a prime power is of degree $q^m$. Its
afforded character decomposes into a sum of $q+1$ irreducibles ones
where one of them has degree $(q^m+(-1)^mq)/(q+1)$ and the others
have degree $(q^m-(-1)^m)/(q+1)$. These irreducible characters are
called (complex) Weil characters of $SU_m(q)$. Each Weil character
of $SU_m(q)$ extends to $q+1$ distinct Weil characters of $U_m(q)$
(see Lemma~4.7 of~\cite{TZ}). All nonlinear constituents of the
reduction modulo $\ell$ of a complex Weil character are called
($\ell$-Brauer) Weil characters. It is well-known (for instance, see
Theorem~16 of~\cite{HM}) that, when $\ell\nmid q$, any Weil
character lifts to characteristic $0$ and any irreducible Brauer
character of degree either $(q^m+(-1)^mq)/(q+1)$ or
$(q^m-(-1)^m)/(q+1)$ is a Weil character. The following lemma is a
consequence of Lemma~4.2 of~\cite{TZ}.

\begin{lemma}\label{lemmaUodd2} Suppose that $\ell$ is odd. Then the restriction of
a $\ell$-Brauer Weil character of $U_{2n+1}(2)$ of degree
$(2^{2n+1}-2)/3$ to $U_{2n}(2)$ is a sum of two Weil characters of
degree $(2^{2n}-1)/3$.
\end{lemma}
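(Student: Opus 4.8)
The plan is to reduce the assertion to a statement about \emph{complex} Weil characters, where the restriction is controlled by Lemma~4.2 of~\cite{TZ}, and then to transport the result back to characteristic~$\ell$. First, since $\ell$ is odd we have $\ell\nmid q=2$, and so by Theorem~16 of~\cite{HM} (together with the discussion preceding the lemma) the irreducible complex Weil characters of the unitary groups $U_m(2)$ reduce irreducibly modulo~$\ell$, and every $\ell$-Brauer Weil character is the reduction of such a complex Weil character, of the same degree. The complex Weil characters of $U_{2n+1}(2)$ have degree $(2^{2n+1}-2)/3=(q^m+(-1)^m q)/(q+1)$ or $(2^{2n+1}+1)/3=(q^m-(-1)^m)/(q+1)$, with $m=2n+1$; hence an $\ell$-Brauer Weil character of $U_{2n+1}(2)$ of degree $(2^{2n+1}-2)/3$ equals $\overline{\theta}$ for some complex Weil character $\theta$ of $U_{2n+1}(2)$, necessarily one of degree $(2^{2n+1}-2)/3$. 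Likewise every complex Weil character of $U_{2n}(2)$ of degree $(2^{2n}-1)/3$ reduces to an $\ell$-Brauer Weil character of the same degree. It therefore suffices to prove that $\theta|_{U_{2n}(2)}$ is a sum of two complex Weil characters of $U_{2n}(2)$ of degree $(2^{2n}-1)/3$.

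Next, I regard $U_{2n}(2)$ as the subgroup of $U_{2n+1}(2)$ stabilizing the nonsingular point $\langle g\rangle$, so that the natural module splits as $V=\langle g\rangle\perp V_1$ with $V_1$ the natural module for $U_{2n}(2)$. Lemma~4.2 of~\cite{TZ} expresses the restriction of a complex Weil character of $U_m(q)$ to such an embedded $U_{m-1}(q)$ as a sum of complex Weil characters of $U_{m-1}(q)$; applying it with $m=2n+1$ and $q=2$ to $\theta$, I obtain $\theta|_{U_{2n}(2)}=\sum_i\theta_i$ with each $\theta_i$ a complex Weil character of $U_{2n}(2)$.

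Finally, I identify the $\theta_i$ by a degree count. Every complex Weil character of $U_{2n}(2)$ has degree $(2^{2n}+2)/3$ or $(2^{2n}-1)/3$; these two integers differ by~$1$, hence are coprime, and since $2n\geq4$ (the lemma is needed only for $2n+1\geq5$) we have $(2^{2n}+2)/3>2$. Writing $a$ and $b$ for the numbers of summands $\theta_i$ of degree $(2^{2n}+2)/3$ and $(2^{2n}-1)/3$ respectively, the degree equation $a(2^{2n}+2)/3+b(2^{2n}-1)/3=\deg\theta=(2^{2n+1}-2)/3=2(2^{2n}-1)/3$ has $(a,b)=(0,2)$ as its only solution in nonnegative integers. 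Hence $\theta|_{U_{2n}(2)}$ is a sum of exactly two complex Weil characters of $U_{2n}(2)$ of degree $(2^{2n}-1)/3$; reducing modulo~$\ell$ and invoking the first paragraph finishes the proof.

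The main obstacle is the careful use of Lemma~4.2 of~\cite{TZ}: one must confirm that restricting the complex Weil character $\theta$ of $U_{2n+1}(2)$ to $U_{2n}(2)$ genuinely yields a sum of \emph{Weil} characters of $U_{2n}(2)$, with no trivial, linear, or other non-Weil summands intervening, so that the elementary degree count above becomes decisive. Everything else is routine bookkeeping with the two numerical degrees $(q^m+(-1)^m q)/(q+1)$ and $(q^m-(-1)^m)/(q+1)$ evaluated at $q=2$.
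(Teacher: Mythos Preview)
Your proposal is correct and follows essentially the same approach as the paper, which simply states that the lemma is a consequence of Lemma~4.2 of~\cite{TZ} without further detail. You have supplied the lifting argument via~\cite{HM} and the degree count that the paper leaves implicit; your only caveat about confirming that the restriction in Lemma~4.2 of~\cite{TZ} yields genuine Weil characters with no extraneous summands is exactly the content of that lemma, so the proof is complete.
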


We note that, when $\ell=3$, $2^{2n-2}=-2^{2n-1}=1$ and therefore
$\FF P$ has only one graph submodule $U'_1$.

\begin{lemma}\label{lemmaUodd3} Suppose $\ell=3$. Then
\begin{enumerate}
\item[(i)] $U'_1$ is simple and it occurs as a composition
factor of $\FF P$ with multiplicity $2$.
\item[(ii)] $U'_1$ affords the irreducible character $\overline{\varphi}$.
In particular, $\overline{\varphi}(1)=\dim
U'_1=(2^{2n+1}+1)(2^{2n}-1)/9$.
\end{enumerate}
\end{lemma}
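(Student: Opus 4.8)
The plan is to prove Lemma~\ref{lemmaUodd3} by following closely the template established for $U_{2n}(2)$ in Lemma~\ref{lemmaUeven2}, replacing the doubly-transitive-point argument there by the Weil-character input recorded in Lemmas~\ref{lemmaUodd1} and~\ref{lemmaUodd2}. First I would establish that $T(\FF P)\subseteq U_1$ but $T(\FF P)\nsubseteq U'_1$: using the same trick as before, take $S$ to be the set of nonsingular points of the form $\langle g+v\rangle$ with $v\in\langle e_1,\dots,e_n\rangle$ (or an analogous explicit set), check that $\sum_{\alpha\in S}v_{1,\alpha}=[P]$, and note that $|S|$ is a power of $2$, hence nonzero in $\FF$. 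By Proposition~\ref{mainUodd} this forces $U_1=U'_1\oplus T(\FF P)$ with $U'_1$ simple, which is the first half of~(i). For the multiplicity statement, observe that $U_1$, and hence $U'_1$, is self-dual (via the bilinear form $[v_{1,\alpha},v_{1,\beta}]=\langle v_{1,\alpha},\beta\rangle$ as in the $O^+$ case), so $\FF P/U_1^{\prime\perp}\cong\Hom_\FF(U'_1,\FF)\cong U'_1$; combined with $U'_1\subseteq U_1^{\prime\perp}$ from Lemma~\ref{liebeck} (since $\langle U'_1,U_1\rangle=s$ — one should check whether $s=(2^{4n-3}-2^{2n-1})/3$ vanishes mod $3$; if not, one instead uses that $T(v)=-v$ on $U'_1$ forces $\langle U'_1,U'_1\rangle=0$), this shows $U'_1$ occurs at least twice as a composition factor of $\FF P$.

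Next I would pin down $\dim U'_1$, equivalently show $\overline{\varphi}$ is irreducible. Since $\overline{\rho}=1+\overline{\varphi}+\overline{\psi}$ and, by Lemma~\ref{lemmaUodd1}, $\overline{\psi}$ has no repeated constituent other than the two equal Weil constituents of degree $(2^{2n+1}-2)/3$, the repetition of $U'_1$ in $\overline{\rho}$ must come either from $U'_1$ appearing once in $\overline{\varphi}$ and once in $\overline{\psi}$, or twice in $\overline{\varphi}$, or as one of the degree-$(2^{2n+1}-2)/3$ Weil constituents of $\overline{\psi}$ (possibly also inside $\overline{\varphi}$). The plan is to rule out everything except "$\overline{\varphi}$ is irreducible and equals the character afforded by $U'_1$." A dimension/degree count is the main lever: $\varphi(1)=(2^{2n+1}+1)(2^{2n}-1)/9$, and the smallest nontrivial cross-characteristic irreducible degree of $U_{2n+1}(2)$ (or of $SU_{2n+1}(2)$) is the Weil degree $(2^{2n+1}-2)/3$; so if $\overline{\varphi}$ were reducible its constituents would be forced into a very short list of degrees. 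If $\overline{\varphi}$ contained a Weil constituent of degree $(2^{2n+1}-2)/3$, the complement inside $\overline{\varphi}$ would have degree $(2^{2n+1}+1)(2^{2n}-1)/9-(2^{2n+1}-2)/3$, which is again too small unless it is itself a sum of a few small (Weil or linear) pieces — and here I would invoke Lemma~\ref{lemmaUodd2} together with restriction to $G_1=U_{2n}(2)$, using the already-determined structure of $\FF P_1$ from $\S\ref{sectionUeven}$ (Lemma~\ref{lemmaUeven2}), exactly as Lemmas~\ref{lemmaO-1} and~\ref{lemmaO-2} were used in the $O^-$ analysis: count multiplicities of Weil constituents of degree $(2^{2n}-1)/3$ on the $G_1$-side and derive a numerical contradiction. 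Once $\overline{\varphi}$ is known irreducible, it must be the constituent that repeats in $\overline{\rho}$ (its degree is too large to sit inside $\overline{\psi}$, as Lemma~\ref{lemmaUodd1} shows $\overline{\psi}$'s constituents all have degree $\ne(2^{2n+1}+1)(2^{2n}-1)/9$), so $U'_1$ affords $\overline{\varphi}$ and the multiplicity is exactly $2$ by a decomposition-matrix/composition-length count on $\FF P$.

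I expect the main obstacle to be the reducibility-exclusion step for $\overline{\varphi}$: unlike the $U_{2n}(2)$ case, where a convenient doubly-transitive action of $\Stab_G(E)$ on $P(E)$ split $\overline{\varphi}$ cleanly into $\zeta+\sigma_0$ and Proposition~4.4 of~\cite{ST} handed over irreducibility of the pieces, here one does not obviously have such an auxiliary action, so the argument must instead lean on the Weil-character classification (Lemma~\ref{lemmaUodd2}, Theorem~16 of~\cite{HM}) plus a careful restriction-to-$G_1$ bookkeeping. The delicate points will be (a) verifying the exact value of $s\bmod 3$ and choosing the right self-orthogonality argument accordingly, and (b) making the degree inequalities tight enough — using the lower bound on nontrivial irreducible degrees of $U_{2n+1}(2)$ in cross characteristic (from~\cite{Ho} or~\cite{TZ}) — that no spurious decomposition of $\overline{\varphi}$ survives. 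Everything else is a direct transcription of the $U_{2n}(2)$ argument with $V_1=\langle e_2,\dots\rangle$ replaced by the appropriate odd-dimensional hyperplane and $\rho^0$'s constituent degrees taken from~\cite{L2} as quoted in Lemma~\ref{lemmaUodd1}.
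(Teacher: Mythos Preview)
Your plan is essentially the paper's own argument: the set $S=\{\langle g+v\rangle : v\in\langle e_1,\dots,e_n\rangle\}$ with $|S|=4^n$ gives $[P]\in U_1\setminus U'_1$, so $U'_1$ is simple by Proposition~\ref{mainUodd}; self-duality plus $U'_1\subseteq U_1'^{\perp}$ gives multiplicity~$\geq 2$; then one restricts to $G_1=U_{2n}(2)$ via the $\FF G_1$-isomorphism $\FF P\cong\FF\oplus\FF P_1\oplus 3\FF P_1^0$, counts that $\overline{\rho}|_{G_1}$ has exactly four Weil constituents of degree $(2^{2n}-1)/3$ and at most $15$ trivial constituents, observes that the two Weil constituents of $\overline{\psi}$ already account for all four via Lemma~\ref{lemmaUodd2}, and concludes $\overline{\varphi}$ has no Weil constituent. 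Two points need correction.

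First, your final identification step contains a factual error: Lemma~\ref{lemmaUodd1} explicitly lists a constituent of $\overline{\psi}$ of degree $(2^{2n+1}+1)(2^{2n}-1)/9=\varphi(1)$, so it is \emph{false} that ``$\overline{\psi}$'s constituents all have degree $\ne\varphi(1)$.'' In fact that constituent of $\overline{\psi}$ \emph{is} $\overline{\varphi}$ once the latter is shown irreducible, and this is exactly the mechanism by which $U'_1\cong\overline{\varphi}$ acquires multiplicity~$2$ in $\FF P$ --- once from $\overline{\varphi}$ itself and once from inside $\overline{\psi}$. Second, the degree-bound input you need is not merely a lower bound from \cite{Ho} or \cite{TZ} but the classification of all small cross-characteristic irreducibles of $SU_{2n+1}(2)$ as linear or Weil; the paper cites Theorem~2.7 of~\cite{GMST}. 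With this in hand the contradiction runs: if $\overline{\varphi}$ were reducible it would (by the multiplicity constraint and Lemma~\ref{lemmaUodd1}) contain a constituent of degree $(2^{2n+1}+1)(2^{2n}-4)/9$ (or one less), leaving total degree $(2^{2n+1}+1)/3$ (or one more) for the remaining constituents; by \cite{GMST} these are linear or Weil, but Weil is already excluded, so one gets $\geq(2^{2n+1}+1)/3$ linear constituents, contradicting the bound of $15$ on trivial constituents in $\overline{\rho}|_{G_1}$ for $n\geq 3$ (with $n=2$ handled by~\cite{Atl2}). Finally, your hedging about $s\bmod 3$ is unnecessary: Lemma~\ref{liebeck} gives $\langle U'_c,U_d\rangle=0$ for roots $c,d$ independently of the value of~$s$.
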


\begin{proof} Let $S$ be the set of nonsingular points in $V$ of the form $\langle
g+v\rangle$ where $v\in \langle e_1,...,e_n\rangle$. It is easy to
see that
$$\sum_{\alpha\in S}v_{1,\alpha}=\sum_{\alpha\in S}(\alpha+[\Delta(\alpha)])=[P].$$
Therefore, $T(\FF P)\subset U_1$. Moreover, $|S|=4^{n}\neq 0$. It
follows that $T(\FF P)\nsubseteq U'_1$. By
Proposition~\ref{mainUodd}, $U'_1$ is simple and $U_1=U'_1\oplus
T(\FF P)$, proving the first part of~(i). By Lemma~\ref{liebeck},
$\langle U'_1,U'_1\rangle=0$ and therefore $U'_1\subseteq
U_1^{'\perp}$. Since $U_1$ as well as $U'_1$ are self-dual, $\FF
P/U_1^{'\perp}\cong \Hom_\FF(U'_1,\FF)\cong U'_1$. Hence $U'_1$
occurs at least twice as a composition factor of $\FF P$.

Let $P_1$ and $P_1^0$ be the sets of nonsingular and singular
points, respectively, in $V_1=\langle e_1$, $...$, $e_{n}$,
$f_1,...,f_{n}\rangle$. Since any nonsingular point in $V$ is either
$\langle g\rangle$, a nonsingular point in $V_1$, or a point of the
form $\langle g+v \rangle$ where $v$ is a nonzero singular vector in
$V_1$, we have the following isomorphism of $\FF U_{2n}(2)$-modules:
$$\FF P\cong \FF\oplus\FF P_1\oplus 3\FF P_1^0.$$ Combining this
isomorphism with the structures of $\FF P_1$ given in
Table~\ref{tableUeven} and $\FF P_1^0$ given in Figure~1
of~\cite{ST}, we see that $\overline{\rho}|_{U_{2n}(2)}$ has exactly
$4$ constituents of degree $(2^{2n}-1)/3$ and at most $15$ trivial
constituents ($\FF P_1$ has $1$ constituent of degree $(2^{2n}-1)/3$
and at most $2$ trivial constituents; $\FF P_1^0$ has $1$
constituent of degree $(2^{2n}-1)/3$ and at most $4$ trivial
constituents). We notice that the constituents of degree
$(2^{2n}-1)/3$ are Weil characters of $U_{2n}(2)$. By
Lemma~\ref{lemmaUodd1}, $\overline{\psi}$ has $2$ constituents of
degree $(2^{2n+1}-2)/3$. Again these constituents are Weil
characters of $G$ and therefore their restrictions to $U_{2n}(2)$
give us $4$ Weil characters of degree $(2^{2n}-1)/3$ by
Lemma~\ref{lemmaUodd2}. We have shown that $\overline{\varphi}$ has
no constituent which is a Weil character.

We will prove~(ii) by contradiction. Assume that
$\overline{\varphi}$ is reducible. By Lemma~\ref{lemmaUodd1} and the
fact that $U'_1$ occurs as a composition factor of $\FF P$ at least
twice, $\overline{\varphi}$ must have a constituent of degree
$(2^{2n+1}+1)(2^{2n}-4)/9$ when $3\nmid n$ or degree
$(2^{2n+1}+1)(2^{2n}-4)/9-1$ when $3\mid n$. Therefore all other
constituents of $\overline{\varphi}$ are of degree $\leq
(2^{2n+1}+1)/3+1$ since $\varphi(1)=(2^{2n+1}+1)(2^{2n}-1)/9$. Using
Theorem~2.7 of~\cite{GMST}, we deduce that these constituents are
either linear or Weil characters. The latter case does not happen
from the previous paragraph. So $\overline{\varphi}$ has at least
$(2^{2n+1}+1)/3$ linear constituents, contradicting to the fact that
$\overline{\rho}|_{U_{2n}(2)}$ has at most $15$ trivial
constituents, provided that $n\geq 3$. The case $n=2$ can be handled
easily by using~\cite{Atl2}.
\end{proof}

\medskip

\textbf{Proof of Theorem \ref{theorem} when $G=U_{2n+1}(2)$}. As
described in Table~\ref{tableUodd}, we consider the following cases.

\medskip

(i) $\ell\neq2,3; \ell\nmid (2^{2n+1}+1)$: Then we have
$|P|=(2^{4n+1}+2^{2n})/3\neq 0$ and therefore $S(\FF P)\cap T(\FF
P)=\{0\}$. As before, we have $\FF P=T(\FF P)\oplus
U'_{2^{2n-2}}\oplus U'_{-2^{2n-1}},$ which implies the theorem in
this case by Propositions~\ref{mainUodd} and~\ref{dimensionUodd}. In
Table~\ref{tableUodd}, $X:=U'_{2^{2n-2}}$ and $Y:=U'_{-2^{2n-1}}$.

\medskip

(ii) $\ell\neq2,3; \ell\mid (2^{2n+1}+1)$: Then $T(\FF P)\subset
S(\FF P)$. As before, we have $T(\FF P)\cap U'_{2^{2n-2}}=\{0\}$ and
therefore, by Proposition~\ref{mainUodd}, $U'_{2^{2n-2}}$ is simple.
We also have $\FF P=U'_{2^{2n-2}}\oplus U_{-2^{2n-1}}.$ Moreover,
$U_{-2^{2n-1}}$ is uniserial with composition series $0\subset T(\FF
P) \subset U'_{-2^{2n-1}}\subset U_{-2^{2n-1}}$ and socle series
$\FF-Y-\FF$, where $Y:=U'_{-2^{2n-1}}/T(\FF P)$. In Table
\ref{tableUodd}, $X:=U'_{2^{2n-2}}$.

\medskip

(iii) $\ell=3; 3\mid n$: In the context of Lemmas~\ref{lemmaUodd1}
and~\ref{lemmaUodd3}, here and after, we will denote by $\FF$, $Z,
X$ the simple $\FF G$-modules affording the $3$-Brauer characters of
degrees $1$, $(2^{2n+1}-2)/3$, $(2^{2n+1}+1)(2^{2n}-1)/9$,
respectively. Also, let $W$ be the irreducible $\FF G$-module
affording the character of degrees $(2^{2n+1}+1)(2^{2n}-4)/9-1$ when
$3 \mid n$ and $(2^{2n+1}+1)(2^{2n}-4)/9$ when $3\nmid n$. By
Lemma~\ref{lemmaUodd3}, we may choose $X=U'_1$.

Also from Lemmas~\ref{lemmaUodd1} and~\ref{lemmaUodd3}, when $3\mid
n$, $\FF P$ has composition factors: $\FF$ ($3$ times), $Z$ (twice),
$X$ (twice), and $W$. Moreover, $|P|=(2^{4n+1}+2^{2n})/3\neq 0$ and
therefore $\FF P=T(\FF P)\oplus S(\FF P)$. It follows that, by
Proposition~\ref{mainUodd}, $X=U'_1$ is the socle of $S(\FF P)$.

By~(\ref{R}), we have $\Im(R|_{S(\FF P^0)})\cong S(\FF
P^0)/\Ker(R|_{S(\FF P^0)})$. Since any nonzero submodule of $S(\FF
P)$ has socle $X$, the quotient $S(\FF P^0)/\Ker(R|_{S(\FF P^0)})$
also has socle $X$. Inspecting the structure of $S(\FF P^0)$ from
Figure~2 of~\cite{ST}, we see that the only quotient of $S(\FF P^0)$
having $X$ as the socle is $X-Z-\FF-W$. So, $\Im(R|_{S(\FF P^0)})$
is uniserial with socle series $X-Z-\FF-W$. It follows that the
self-dual module $U_1^\perp/U'_1$ has a submodule $Z-\FF-W$. Notice
that $U_1^\perp/U'_1$ has composition factors: $Z$ (twice), $\FF$
(twice), and $W$, it must have the structure $Z-\FF-W-\FF-Z$. We now
can conclude that the structure of $S(\FF P)$ is $X-Z-\FF-W-\FF-Z-X$
by Proposition~\ref{mainUodd}.

\medskip

(iv) $\ell=3; n\equiv1\pmod3$: Then we have
$s=(2^{4n-3}-2^{2n-1})/3=0$. Therefore by Lemma~\ref{liebeck},
$\langle U_1,U_1\rangle=0$ or equivalently $U_1\subseteq U_1^\perp$.
In the proof of Lemma~\ref{lemmaUodd3}, we saw that $U_1=T(\FF
P)\oplus U'_1\cong \FF\oplus X$. Also, by the self-duality of $U_1$,
$\FF P/U_1^\perp\cong U_1$. It follows that, by
Lemmas~\ref{lemmaUodd1} and~\ref{lemmaUodd3}, $U_1^\perp/U_1$ has
composition factors: $Z$ (twice) and $W$.

We will show that $U_1^\perp/U_1$ is actually uniserial with socle
series $Z-W-Z$. Suppose not, so that $W$ is a submodule of
$U_1^\perp/U_1$. The self-duality of $U_1^\perp/U_1$ then implies
that $W$ is a direct summand of $U_1^\perp/U_1$. By
Proposition~\ref{mainUodd}, any nonzero submodule of $\FF P$ which
is not $T(\FF P)$ has $X=U'_1$ in the socle. Therefore,
by~(\ref{R}), $\FF P^0/\Ker R\cong \Im(R)$ has the socle containing
$X$. Inspecting the submodule lattice of $\FF P^0$ given in Figure~2
of~\cite{ST}, we conclude that $\Im(R)$ has the socle series
$X-Z-(\FF\oplus W)$. This shows that $W$ cannot be a direct summand
in $U_1^\perp/U_1$, a contradiction.

We have shown that $U_1^\perp/U_1$ is uniserial with socle series
$Z-W-Z$. Now we temporarily set $\FF_1:=T(\FF P)$ and $\FF_2:=\FF
P/S(\FF P)\cong U_1^{'\perp}/U^\perp_1$. From the previous
paragraph, $\FF P$ has a submodule $\Im(R)$ with socle series
$U_1'-Z-(\FF\oplus W)$ and the fact that $U_1=\FF_1\oplus U'_1$,
$\Im(R)$ must be $X-Z-(\FF_2\oplus W)$. Arguing similarly, $\FF P$
has a quotient having the socle series $(\FF_1\oplus W)-Z-X$. The
structure of $\FF P$ is now determined as given in
Table~\ref{tableUodd}.

\medskip

(v) $\ell=3; n\equiv2\pmod3$: Then $|P|=(2^{4n+1}+2^{2n})/3\neq 0$
and therefore $\FF P=T(\FF P)\oplus S(\FF P)$. It follows that
$S(\FF P)$ has composition factors: $\FF$, $X$ (twice), $Z$ (twice),
and $W$ by Lemmas~\ref{lemmaUodd1} and~\ref{lemmaUodd3}.

Proposition~\ref{mainUodd} implies that $X$ is the socle of $S(\FF
P)$. By Lemma~\ref{main3} and the self-duality of $S(\FF P)$, the
top layer of the socle series of $S(\FF P)$ is (isomorphic to) $X$
and hence any nontrivial quotient of $S(\FF P)$ has the top layer
$X$. From~(\ref{Q}) and Lemma~\ref{main1}, we have $\Im(Q|_{S(\FF
P)})\cong S(\FF P)/\Ker(Q|_{S(\FF P)})$. It follows that $X$ is the
top layer of the socle series of $\Im(Q|_{S(\FF P)})\subseteq S(\FF
P^0)$. Inspecting the submodule lattice of $\FF P^0$ given in
Figure~$2$ of~\cite{ST}, we see that $\Im(Q|_{S(\FF P)})$ must be
$(\FF\oplus W)-Z-X$. In other words, $S(\FF P)$ has a quotient
isomorphic to $(\FF\oplus W)-Z-X$.

By self-duality of $X=U'_1$ and $S(\FF P)$, $U_1^{\perp}/U'_1$ is
self-dual and has composition factors: $Z$ (twice), $\FF$, and $W$.
If either $\FF$ or $W$ is a submodule of $U_1^{\perp}/U'_1$, it
would be a direct summand in $U_1^{\perp}/U'_1$, which contradicts
the conclusion of the previous paragraph. So $Z$ must be the socle
of $U_1^{\perp}/U'_1$ and therefore the socle series of
$U_1^{\perp}/U'_1$ is $Z-(\FF\oplus W)-Z$. Finally, we obtain the
socle series of $S(\FF P)$: $X-Z-(\FF\oplus W)-Z-X$, as described.
\hfill$\Box$

\medskip

\textbf{Acknowledgement}: The authors are grateful for useful
comments and suggestions from Ulrich Meierfrankenfeld and Pham Huu
Tiep. The second author thanks the Department of Mathematics at
Michigan State University for the supportive and hospitable work
environment.


\end{document}